\newtheorem{thm}{Theorem}[section]
\newtheorem{lm}{Lemma}[section]
\newtheorem{defn}{Definition}[section]
\newtheorem{pro}{Proposition}[section]
\newtheorem{rem}{Remark}[section]
\newtheorem{cor}{Corollary}[section]
\newtheorem{cla}{Claim}[section]
\newtheorem{exm}{Example}[section]
\newtheorem{conj}{Conjecture}[section]
\newtheorem{que}{Question}[section]
\DeclareMathOperator{\arcsinh}{arcsinh}
\DeclareMathOperator{\dial}{dil}
\DeclareMathOperator{\tr}{tr}
\DeclareMathOperator{\diver}{div}
\DeclareMathOperator{\ric}{Ric}
\DeclareMathOperator{\inj}{inj}
\DeclareMathOperator{\Area}{Area}
\DeclareMathOperator{\vol}{vol}
\DeclareMathOperator{\diam}{diam}
\DeclareMathOperator{\Rm}{Rm}
\newsavebox\myboxA
\newsavebox\myboxB
\newlength\mylenA
\newcommand*\xoverline[2][0.75]{%
    \sbox{\myboxA}{$\m@th#2$}%
    \setbox\myboxB\null% Phantom box
    \ht\myboxB=\ht\myboxA%
    \dp\myboxB=\dp\myboxA%
    \wd\myboxB=#1\wd\myboxA% Scale phantom
    \sbox\myboxB{$\m@th\overline{\copy\myboxB}$}%  Overlined phantom
    \setlength\mylenA{\the\wd\myboxA}%   calc width diff
    \addtolength\mylenA{-\the\wd\myboxB}%
    \ifdim\wd\myboxB<\wd\myboxA%
    \rlap{\hskip 0.5\mylenA\usebox\myboxB}{\usebox\myboxA}%
    \else
        \hskip -0.5\mylenA\rlap{\usebox\myboxA}{\hskip 0.5\mylenA\usebox\myboxB}%
    \fi}
\begin{document}

\title[]{Total Mean Curvature of the Boundary and Nonnegative Scalar Curvature Fill-ins}

\author{Yuguang Shi}
\address [Yuguang Shi] {Key Laboratory of Pure and Applied Mathematics, School of Mathematical Sciences, Peking University, Beijing, 100871, P.\ R.\ China}
\email{ygshi@math.pku.edu.cn}
\thanks{Yuguang Shi, Guodong Wei  are partially supported by NSFC 11671015 and 11731001.  Wenlong Wang is partially supported by NSFC 11671015, 11701326.}

\author{Wenlong Wang}
\address [Wenlong Wang] {School of Mathematical Sciences and LPMC, Nankai University, Tianjin, 300071, P. R. China}
 \email{wangwl@nankai.edu.cn}

\author{Guodong Wei}
\address [Guodong Wei]  {School of Mathematics (Zhuhai), Sun Yat-sen University, Zhuhai, Guangdong, 519082, P. R. China}
\email{weigd3@mail.sysu.edu.cn}

\renewcommand{\subjclassname}{
 \textup{2010} Mathematics Subject Classification}
\subjclass[2010]{Primary 53C20; Secondary 83C99}

\date{June 4, 2019}

\begin{abstract}
In the first part of this paper, we prove the extensibility of an arbitrary boundary metric to a positive scalar curvature  (PSC) metric inside for a compact manifold with boundary, which completely solves an open problem due to Gromov (see Question \ref{extension1}). Then we introduce a fill-in invariant (see Definition \ref{fillininvariant}) and discuss its relationship with the positive mass theorems for asymptotically flat (AF) and asymptotically hyperbolic (AH) manifolds. Moreover, we prove that the positive mass theorem for AH manifolds implies that for AF manifolds. In the end, we give some estimates for the fill-in invariant, which provide some partially affirmative answers to Gromov's conjectures formulated in \cite{Gro19} (see Conjecture \ref{conj0} and Conjecture \ref{conj1} below). 
\end{abstract}

\maketitle
\markboth{Shi Yuguang, Wang Wenlong, Wei Guodong  }{Total mean curvature of the boundary and fill-in of NNSC}

\tableofcontents

\section{Introduction}\label{section1}

One of the fundamental problems in Riemannian geometry is to study the relationship between the geometry of the boundary and interior of a compact manifold with boundary and nonnegative scalar curvature (NNSC). However, people even do not know the  extensibility of an arbitrary boundary metric to a positive scalar curvature  (PSC) metric inside. Indeed, M. Gromov raised the following question.
\begin{que}[\cite{Gro19}, p.31--32]\label{extension1}
Let $X$ be a compact manifold with non-empty boundary $Y=\partial X$ and let $\gamma$ be a smooth Riemannin metric on $Y$. Does $\gamma$ always extend to a Riemannin metric $g$ on $X$ with positive scalar curvature? 
\end{que}

Inspired by the arguments in the proof of Theorem 4 in \cite{SWWZ}, we solve Question \ref{extension1} completely. Namely,

\begin{thm}\label{pscextension1}
Let $X$ be a compact manifold with non-empty boundary $Y=\partial X$. Then any metric $\gamma$ on $Y$ can be extended to a Riemannian metric $g$ on $X$ with positive scalar curvature. 
\end{thm}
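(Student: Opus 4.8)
The plan is to dispose of the low-dimensional cases by hand (for a surface with boundary one caps each boundary circle by a geodesic ball in a tiny round sphere, and $\dim X=1$ is vacuous) and to assume henceforth $n+1:=\dim X\ge 3$. Since scaling a metric by a positive constant preserves the sign of its scalar curvature, it is enough to produce a metric $g$ on $X$ with $R_g>0$ and $g|_Y=c\,\gamma$ for \emph{some} constant $c>0$, and then rescale. It is also harmless to allow $g$ to be merely Lipschitz across one hypersurface, provided the mean curvatures from the two sides satisfy the inequality in Miao's corner-smoothing theorem, as that procedure returns a genuinely smooth metric keeping $R>0$; this is why below we will make one of the two sides strictly mean convex with large mean curvature.

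The core of the argument is a warped-product collar lemma. On a collar $Y\times[0,\ell]$ take $g_{\mathrm{col}}=dt^2+f(t)^2\gamma$, so that
\[
R_{g_{\mathrm{col}}}=f^{-2}R_\gamma-2n\frac{f''}{f}-n(n-1)\Big(\frac{f'}{f}\Big)^2 .
\]
Choosing $f(t)=(1+t/b)^{1/(n+1)}$ one finds $-2n f''/f-n(n-1)(f'/f)^2=\dfrac{n}{(n+1)(b+t)^2}$ and $f^{-2}\le 1$, whence
\[
R_{g_{\mathrm{col}}}\ \ge\ \frac{n}{(n+1)(b+t)^2}-\|R_\gamma\|_{\infty}.
\]
Taking $b=\ell$ with $\ell$ small makes this strictly positive on $Y\times[0,\ell]$; moreover $f(0)=1$, so the induced metric at $t=0$ is $\gamma$, while the inner slice $Y\times\{\ell\}$ has induced metric $2^{2/(n+1)}\gamma$ and is strictly mean convex, with mean curvature $\tfrac{n}{2(n+1)\ell}$ relative to the normal pointing out of the collar. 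Concatenating finitely many such collars—each step is admissible because after rescaling $\|R_\gamma\|_\infty$ has strictly decreased—yields, for any prescribed $\lambda\ge 1$, a positive scalar curvature metric on $Y\times[0,1]$ that equals $\gamma$ on $Y\times\{0\}$, equals a warped product near $Y\times\{1\}$ with inner induced metric $\lambda^2\gamma$ (hence of tiny scalar curvature $\|R_\gamma\|_\infty/\lambda^2$), and whose inner mean curvature we may take as large as we please.

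By the collar lemma we may therefore assume the desired metric on $X$ is prescribed near $\partial X$ to be this model; it remains to extend it across $X$. I would do this through a handle decomposition: realize $X$ as the union of $0$-handles with handles of index at most $n$ (a spine of $X$ has dimension $\le n$), build the $0$-handles from geodesic balls in round spheres, attach the handles of index $\le n-2$ by the Gromov--Lawson--Schoen--Yau surgery (codimension $\ge 3$, so $R>0$ persists), and treat the remaining handles of index $n-1,n$ directly. The model to keep in mind is $X=D^{n+1}$: on $S^n\times[0,T]$ put $g=dt^2+\phi(t)^2h_t$ with $\phi$ decreasing and $(h_t)$ a path of unit-volume metrics on $S^n$ (with $\operatorname{tr}_{h_t}\partial_t h_t=0$) running from $\gamma$ up to conformal-constant to the round metric; then
\[
R_g=\phi^{-2}R_{h_t}-2n\frac{\phi''}{\phi}-n(n-1)\Big(\frac{\phi'}{\phi}\Big)^2-\tfrac14|\partial_t h_t|^2_{h_t},
\]
and one alternates long ``resting'' stretches on which $h_t$ is constant and $\phi$ follows the concave profile $(1+s/b)^{1/(n+1)}$ (so the warping terms dominate $\phi^{-2}R_{h_t}$, exactly as in the collar lemma) with short ``burst'' stretches on which $\phi$ is sharply concave, so that the large positive term $-2n\phi''/\phi$ absorbs both the cost $-\tfrac14|\partial_t h_t|^2_{h_t}$ and the negative term $-n(n-1)(\phi'/\phi)^2$ while $\phi$ stays positive; because the path has finite length in the relevant $L^\infty$-type metric, finitely many bursts suffice to reach a tiny round sphere, which one caps by a small spherical cap. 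Gluing any such extension to the collar along the inner slice, smoothing the corner by Miao's theorem (possible since the collar side is strictly mean convex with large mean curvature), and finally rescaling so that the induced metric on $Y$ is exactly $\gamma$, completes the proof.

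The main obstacle I expect is the treatment of the handles of index $n-1$ and $n$, i.e.\ surgeries in codimension $1$ and $2$, which are \emph{not} covered by the usual positive-scalar-curvature surgery theory; here one must exploit the presence of the boundary together with the large-scale, nearly flat boundary metric and large mean curvature furnished by the collar in order to run the bending construction along the attaching regions. The delicate quantitative point is the balancing inside the ``burst'' intervals—keeping $-2n\phi''/\phi$ large enough to beat $-\tfrac14|\partial_t h_t|^2_{h_t}+n(n-1)(\phi'/\phi)^2+\phi^{-2}\|R_{h_t}\|_\infty$ while preventing $\phi$ from reaching $0$—together with the bookkeeping needed to carry this out simultaneously along all the handles rather than just for the ball.
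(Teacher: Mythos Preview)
Your collar construction is fine for what it does, but the proposal has a genuine gap precisely where you flag it: the extension across $X$ via a handle decomposition. Handles of index $n-1$ and $n$ correspond to surgeries in codimension $2$ and $1$, and positive scalar curvature is \emph{not} preserved under such surgeries in general. Your suggestion to ``exploit the presence of the boundary together with the large-scale, nearly flat boundary metric and large mean curvature'' is not an argument; there is no known bending construction that handles codimension-$\le 2$ surgeries in this generality, and the burst/rest profile you sketch for the ball does not transfer to the topologically nontrivial attaching maps of higher-index handles. As written, the proof is incomplete.

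The paper sidesteps this difficulty entirely by \emph{not} building the interior metric from scratch. The key input you are missing is that every compact manifold with nonempty boundary already admits \emph{some} PSC metric $g_1$ (Kazdan--Warner, or Gromov's h-principle for open manifolds). Given that, one only needs a PSC collar on $Y\times[0,1]$ interpolating from the prescribed $\gamma$ to the induced metric $\gamma_1=g_1|_Y$ (after rescaling so that $\gamma_1>\gamma$), with the mean curvature on the $\gamma_1$-side made large enough to beat the mean curvature of $\partial X$ in $(X,g_1)$. The paper builds this collar not by a conformal warped product $dt^2+f(t)^2\gamma$ (which only reaches scalings of $\gamma$) but by the quasi-spherical equation over the linear path $\gamma_t=(1-t)\gamma+t\gamma_1$, which lands exactly on $\gamma_1$; then Miao's corner smoothing glues the collar to $(X,g_1)$. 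Your collar idea is the right shape, but you should replace the handle-by-handle filling with this black-box existence of a PSC metric on $X$, and upgrade the collar so it connects $\gamma$ to an \emph{arbitrary} larger metric rather than merely to a rescaling of itself.
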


Note that there is no mean curvature requirements in Theorem \ref{pscextension1}; as a matter of fact, there is a constant $C$ depending on $(X,\gamma)$ such that we can prescribe any mean curvature $H\leq C$ for a PSC extension of $\gamma$.  But we find that pointwisely sufficient large mean curvature rules out such extensions for certain topological types of boundaries. The topological type of boundary is restricted to the following class.

\begin{defn}
Let $\mathcal C_{n}$ $(n\geq 2)$ be the class of closed $n$-manifolds that can be smoothly embedded into $\mathbf R^{n+1}$. 
\end{defn}

By definition, any manifold in $\mathcal C_n$ is orientable and null-cobordant. Actually, $\mathcal C_n$ contains certain nontrivial differential topological types of closed manifolds. For example, all orientable closed surfaces lie in $\mathcal C_{2}$; $\mathbf S^{n_1}\times \mathbf S^{n_2}\times\cdots\times \mathbf S^{n_k}\in \mathcal C_{n_1+n_2+\cdots+n_k}$. 

To state in a more concise way, we introduce the {\it fill-in} notion, which is not new. A triple of (generalized) Bartnik data $(\Sigma,\gamma,H)$ consists of an oriented closed null-cobordant Riemannian manifold $(\Sigma,\gamma)$ and a given smooth function $H$ on $\Sigma$. We say a compact Riemannian manifold $(\Omega, g)$ is a fill-in of $(\Sigma,\gamma,H)$, if there is an isometry $X: (\Sigma, \gamma)\mapsto (\partial \Omega,  g|_{\partial \Omega})$ such that $H=H_g \circ X$ on $\Sigma$, where $H_g$ is the mean curvature of $\partial \Omega$ in $(\Omega, g)$ with respect to the outward normal. If in addition $R_g\geq 0$, we say  $(\Omega, g)$ is a NNSC fill-in of $(\Sigma,\gamma,H)$. Similarly, one can define fill-ins with scalar curvature bounded from below. 

\begin{thm}\label{finiteinfmeancurpsc1}
Suppose $\Sigma\in\mathcal C_{n}$ $(2\leq n\leq 6)$. For any metric $\gamma$ on $\Sigma$, there is a constant $h_0(\Sigma,\gamma)$ depending only on $(\Sigma,\gamma)$ such that for any smooth function $H$ on $\Sigma$ satisfying $H>h_0(\Sigma,\gamma)$, $(\Sigma,\gamma,H)$ admits no fill-in of nonnegative scalar curvature.
\end{thm}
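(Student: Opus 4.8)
The plan is to argue by contradiction. Assuming that $(\Sigma,\gamma,H)$ admits a NNSC fill-in $(\Omega,g)$ for some very large $H$, I would assemble from it a complete asymptotically flat manifold $N$ of dimension $n+1$ that is exactly Euclidean outside a compact set, has $R_{N}\ge 0$ everywhere and $R_{N}>0$ on a nonempty open set, and therefore has $\ADMm(N)=0$. This contradicts the rigidity part of the positive mass theorem (flat $\Rightarrow R_N\equiv 0$), which is available precisely because $\dim N=n+1\le 7$. The hypothesis $2\le n$ enters because the auxiliary region on which I invoke Theorem \ref{pscextension1} has dimension $n+1\ge 3$, and $n\le 6$ enters only through the positive mass theorem.

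First I fix, once and for all, a smooth embedding $\iota:\Sigma\hookrightarrow\mathbf R^{n+1}$ (possible since $\Sigma\in\mathcal C_{n}$), and write $\iota(\Sigma)=\partial W$ with $W$ a compact domain, so that $E:=\mathbf R^{n+1}\setminus \mathrm{int}\,W$ is its exterior, carrying the flat metric, which is asymptotically flat with zero mass. The core step is an \emph{exterior PSC filling}: using Theorem \ref{pscextension1} together with the mean-curvature-prescription remark following it, applied to the compact region $\overline{B_R\setminus\mathrm{int}\,W}$ with the metric $\gamma$ prescribed on $\Sigma$ and the round metric of radius $R$ prescribed on $S_R=\partial B_R$, and then gluing the flat region $\mathbf R^{n+1}\setminus B_R$ back across $S_R$, one produces a metric $g_E$ on $E$ with: (i) $g_E$ equal to the Euclidean metric outside a compact set; (ii) $R_{g_E}\ge 0$ and $R_{g_E}>0$ on a nonempty open set; (iii) $g_E|_{\Sigma}=\gamma$; and (iv) the mean curvature of $\Sigma$ in $(E,g_E)$ with respect to the outward normal of $E$ (which points into $W$) equal to a prescribed function. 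Since prescribing a \emph{very negative} boundary mean curvature is always admissible (it is the unobstructed direction of the remark), we may take this function to be the constant $\min\{C_E,0\}$, where $C_E$ is the constant from the remark; the only nontrivial matching inequality is the one along $S_R$, and it is harmless because the metric there can be taken to be exactly flat, with $S_R$ round of mean curvature $n/R$.

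Next I glue $(\Omega,g)$ to $(E,g_E)$ along the isometry identifying $(\Sigma,\gamma)$ with both boundaries, obtaining $N:=\Omega\cup_{\Sigma}E$. Along the interface, the mean curvature of $\Sigma$ computed from the $\Omega$-side with respect to $\Omega$'s outward normal is $H$, while from the $E$-side with respect to $E$'s outward normal (the opposite direction) it is the constant $\min\{C_E,0\}$; hence the two add up to $H+\min\{C_E,0\}$, which is strictly positive once $H>h_0(\Sigma,\gamma):=\max\{0,-C_E\}$. By Miao's corner-smoothing theorem (equivalently, the positive mass theorem for metrics with corners along a hypersurface) the metric of $N$ can then be smoothed to a genuine complete metric $g_N$ with $R_{g_N}\ge 0$, still positive on an open subset coming from the interior of $\overline{B_R\setminus\mathrm{int}\,W}$, and unchanged near infinity, so $(N,g_N)$ is asymptotically flat with $\ADMm(N)=0$. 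As $\dim N=n+1\le 7$, the positive mass theorem forces $(N,g_N)$ to be flat, contradicting $R_{g_N}>0$ somewhere. Hence no NNSC fill-in of $(\Sigma,\gamma,H)$ exists once $H>h_0(\Sigma,\gamma)$, and $h_0$ depends only on $(\Sigma,\gamma)$ since the embedding $\iota$ may be chosen as auxiliary data determined by $(\Sigma,\gamma)$.

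The main obstacle is the exterior PSC filling of the second paragraph: one must extract from Theorem \ref{pscextension1} an extension on a region with two boundary components that realizes the desired (very negative) boundary mean curvature along $\Sigma$ while being flat near the other boundary component, and one must keep careful track of sign conventions so that the corner along $\Sigma$ bends in the direction for which the corner-smoothing (or corner positive mass) theorem applies; I would also need to verify that the localized smoothing preserves both the asymptotics and the zero mass, which is standard. A secondary point worth isolating is that one genuinely needs $R_{g_N}>0$ on an open set — not merely NNSC with zero mass — because for a general boundary metric $\gamma$ a flat $N$ need not yield a contradiction by itself (an isometric embedding of $(\Sigma,\gamma)$ as the boundary of a domain in $\mathbf R^{n+1}$ could a priori have large mean curvature), whereas positivity of the scalar curvature on an open set is incompatible with flatness unconditionally.
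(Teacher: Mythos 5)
Your overall strategy coincides with the paper's: glue the hypothetical NNSC fill-in $(\Omega,g)$ to an exterior region that ends in an exactly flat end, arrange a favourable strict mean-curvature jump along $\Sigma$, and contradict the corner positive mass theorem of Miao in dimension $n+1\le 7$. The sign bookkeeping at the interface $\Sigma$ and the definition $h_0=\max\{0,-C_E\}$ are correct. However, the step you yourself flag as the main obstacle --- the ``exterior PSC filling'' --- has a genuine gap as proposed. You invoke Theorem \ref{pscextension1} and the remark following it on the annulus $\overline{B_R\setminus\mathrm{int}\,W}$ and then glue flat space back across $S_R$, asserting that the matching at $S_R$ is ``harmless because the metric there can be taken to be exactly flat.'' Neither claim is supported: Theorem \ref{pscextension1} controls only the \emph{induced} boundary metric, not the metric in a neighbourhood of the boundary, so the interface at $S_R$ is a genuine corner; and the corner condition there requires the mean curvature of $S_R$ computed from the annulus side (with respect to the normal pointing to infinity) to be \emph{at least} the Euclidean value $n/R$. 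The remark after Theorem \ref{pscextension1} goes in the opposite direction --- it only permits prescribing mean curvatures \emph{below} a threshold $C$ (the construction naturally produces very negative boundary mean curvature, and one can always decrease it further, but there is no mechanism to push it up to a prescribed positive value). Nor can you route around this by applying the PSC-cobordism Lemma \ref{PSCcobordismlm} directly to the annulus, since that lemma produces a product cobordism $\Sigma\times[0,1]$, whereas $\overline{B_R\setminus\mathrm{int}\,W}$ is a cobordism from $\Sigma$ to $\mathbf S^{n}$ and is not a product for general $\Sigma\in\mathcal C_n$.

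The paper's proof repairs exactly this point by never introducing a second corner at a large sphere. One rescales the embedding so that $F_{\lambda_0}^*(g_E)>\gamma$, applies the quasi-spherical cobordism Lemma \ref{PSCcobordismlm} on $\Sigma\times[0,1]$ from $(\Sigma,\gamma)$ to $(\Sigma,F_{\lambda_0}^*(g_E))$ --- which, by taking the initial datum $\varepsilon$ small, makes the \emph{outer} mean curvature $h_1$ larger than the Euclidean mean curvature $\tilde H$ of $F_{\lambda_0}(\Sigma)$ while the inner mean curvature $h_0$ becomes very negative --- and then glues the outer boundary directly to the genuinely flat exterior $\tilde\Omega$ of $F_{\lambda_0}(\Sigma)$ in $\mathbf R^{n+1}$, where the induced metrics match exactly and the corner inequality $h_1>\tilde H$ points the right way. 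With that substitution your argument closes; note also that the strict jump at the corners already forces strictly positive mass in Miao's theorem, so the open set of positive scalar curvature you insist on is not actually needed for the contradiction with the exactly flat end.
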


Theorem \ref{finiteinfmeancurpsc1} gives a partial answer to the Problem $\mathbf A$ and $\mathbf {A_1}$ of Problem $\mathbf B$ summarized by Gromov in \cite{Gro18}. Also in \cite{Gro18}, Gromov himself gave a sharp upper bound for the infimum of the mean curvature of NNSC {\it spin} fill-ins. Very recently, when the first edition of this paper was under revision, P. Miao \cite{Miao20} improved Theorem \ref{finiteinfmeancurpsc1} completely by dropping additional topological restrictions on $\Sigma$. His argument is an ingenious use of Theorem \ref{pscextension1} and Schoen-Yau's results on closed manifolds \cite{SY79MM}.

Not only the pointwise mean curvature but also the total mean curvature plays an important role in such fill-in problems.  The following conjecture due to Gromov (see p.23 in \cite{Gro19}) is on the upper bound of the total mean curvature of a fill-in with scalar curvature bounded from below, namely

\begin{conj}\label{conj0}
Let $\sigma$ be a given constant, $(X,g)$ be a compact Riemannian manifold with boundary and scalar curvature $R_g\geq \sigma$. Then the integral of the mean curvature $H$ of the boundary $Y=\partial X$ (with respect to the outward normal) is bounded by 
$$
\int_Y H\,d\mu \leq C,
$$
where $C$ is a constant depending only on $(Y,g|_Y)$ and $\sigma$.
\end{conj}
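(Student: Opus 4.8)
The plan is to recast Conjecture \ref{conj0} as a finiteness statement for the fill-in invariant of Definition \ref{fillininvariant} and then to bound that invariant via the positive mass theorem, in the spirit of Shi--Tam: one surrounds a given fill-in by a canonical non-compact exterior, carries the total mean curvature of the boundary out to infinity along a monotone quasi-local mass, and applies the positive mass theorem at the end. I expect the three ranges $\sigma>0$, $\sigma=0$, $\sigma<0$ to behave quite differently: the method should go through cleanly for $\sigma<0$, the borderline case $\sigma=0$ ought to follow by letting $\sigma\to0^{-}$ (matching the ``AH implies AF'' mechanism announced in the abstract), while for $\sigma>0$ I expect the conjecture to be false, the ambient positive curvature leaving enough slack to inflate $\int_Y H$ without bound. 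Accordingly I would rescale so that $\sigma=-n(n-1)$, take an arbitrary fill-in $(X,g,H)$ of $(Y,\gamma,H)$ with $R_g\ge -n(n-1)$, and assume $H>0$ (otherwise $\int_Y H$ is negative and there is nothing to prove).

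Step one is to build a complete asymptotically hyperbolic manifold $(N,\bar g)$ with $R_{\bar g}\ge -n(n-1)$, compact inner boundary isometric to $(Y,\gamma)$, and inner mean curvature (with respect to the normal pointing to the end) equal to $H$. I would look for $\bar g=\phi^{2}\,dt^{2}+\lambda(t)^{2}\gamma$ on $Y\times[0,\infty)$ with $\lambda(0)=1$, $\lambda(t)\to\infty$, solving the scalar-curvature equation for the lapse $\phi$ with $\phi(0,\cdot)$ prescribed so that $Y\times\{0\}$ has mean curvature $H$. The hyperbolic normalization is what makes this robust for arbitrary $\gamma$: imposing $R_{\bar g}=-n(n-1)$ rather than $R_{\bar g}=0$ introduces a stabilizing negative drift in the lapse equation that dominates for large $t$, so a bounded positive solution persists even when $R_\gamma$ is very negative, whereas the asymptotically flat ($\sigma=0$) version of this construction degenerates precisely when $\gamma$ fails to have positive scalar curvature. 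Two further remarks: the conformal rescaling $\lambda(t)^{2}\gamma$ has sectional curvature $O(\lambda(t)^{-2})\to 0$, so the far slices $(Y\times\{t\},\lambda(t)^{2}\gamma)$ isometrically embed into $\mathbf{H}^{n}$; and for the comparison data near $t=0$ one invokes Theorem \ref{pscextension1}, applied to a fixed null-cobordism of $Y$ (which exists since $Y=\partial X$), to produce a positive-scalar-curvature reference fill-in of $(Y,\gamma)$ whose boundary mean curvature $H^{\mathrm{ref}}$ depends only on $(Y,\gamma)$.

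Step two is the monotonicity. Along the foliation $\{N_{t}=Y\times\{t\}\}$ I would study a Brown--York / Wang--Yau-type mass $m(t)=c_{n}^{-1}\int_{N_{t}}\big(H^{\mathrm{ref}}_{t}-H_{t}\big)\,d\mu_{t}$, where $H_{t}$ is the mean curvature of $N_{t}$ in $(N,\bar g)$ and $H^{\mathrm{ref}}_{t}$ is the boundary mean curvature of the comparison fill-in of $(N_{t},\lambda(t)^{2}\gamma)$: the $\mathbf{H}^{n}$-embedding for large $t$, and the rescaled Theorem \ref{pscextension1} fill-in otherwise. The computation to be carried out is that $t\mapsto m(t)$ is non-increasing and that $m(t)$ converges, as $t\to\infty$, to the mass $\mathfrak m_{\mathrm{AH}}$ of the AH end. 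One then glues $(X,g)$ to $(N,\bar g)$ along $(Y,\gamma)$: the mean curvatures match, so by Miao's corner lemma the glued metric is $C^{0}$ with distributional scalar curvature $\ge -n(n-1)$ and a single complete AH end, and the AH positive mass theorem gives $\mathfrak m_{\mathrm{AH}}\ge 0$. Combining, $0\le \mathfrak m_{\mathrm{AH}}\le m(0)=c_{n}^{-1}\int_{Y}\big(H^{\mathrm{ref}}-H\big)\,d\mu$, hence $\int_{Y}H\le \int_{Y}H^{\mathrm{ref}}=:C(Y,\gamma,\sigma)$, which is the asserted bound; for $\sigma=0$ one then passes to the limit $\sigma\to 0^{-}$.

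The hard part will be the monotonicity for a general, non-embeddable, a priori curvature-free boundary metric: the inequality $m(0)\ge m(\infty)$ in Shi--Tam is tied to the specific choice of $H^{\mathrm{ref}}_{t}$, and although Theorem \ref{pscextension1} guarantees that \emph{some} comparison fill-in exists, upgrading it to a reference for which $m(t)$ is genuinely monotone, and controlling $H^{\mathrm{ref}}_{t}$ uniformly as $\lambda(t)\to\infty$, is where I expect to be forced to settle for a partially affirmative answer (e.g.\ only under a lower bound on $R_\gamma$, or up to replacing $C(Y,\gamma,\sigma)$ by a coarser quantity). A secondary difficulty is the limit $\sigma\to 0^{-}$ itself, since the AH and ADM masses carry different normalizations and the constructed exteriors $(N,\bar g)$ may degenerate as $\sigma\uparrow 0$, so that limit needs uniform estimates on them.
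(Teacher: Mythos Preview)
The statement you are trying to prove is Conjecture~\ref{conj0}, which the paper does \emph{not} prove in full; it establishes only the special case where $Y\simeq\mathbf S^{n-1}$, $H>0$, and the fill-in is spin (Theorem~\ref{Gconjec1}). So there is no ``paper's own proof'' to compare against, and your proposal should be read as an attempted proof of an open problem.

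Several of your reductions are incorrect. First, your expectation that the case $\sigma>0$ is false is backwards: any fill-in with $R_g\ge\sigma>0$ is in particular a fill-in with $R_g\ge 0$, so the bound for $\sigma=0$ immediately gives a (possibly non-sharp) bound for every $\sigma>0$. The ordering is $\Lambda_{+,\kappa}$ non-increasing in $\kappa$, not the other way. Second, ``assume $H>0$, otherwise $\int_Y H$ is negative and there is nothing to prove'' is a genuine gap: $H$ can change sign while $\int_Y H$ is arbitrarily large and positive, and the quasi-spherical construction you invoke requires $H>0$ pointwise for the initial lapse $\bar H_0/H$ to be positive. The paper itself only treats $H>0$ for exactly this reason; dropping the sign hypothesis is part of what makes the full conjecture open.

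The main structural gap is in Step~1. Your exterior $\bar g=\phi^{2}dt^{2}+\lambda(t)^{2}\gamma$ on $Y\times[0,\infty)$ is not, for general $(Y,\gamma)$, an AH manifold in the sense of Definition~\ref{AHdef}: that definition requires each conformal-infinity component to be $(\mathbf S^{n-1},\gamma_{\rm std})$, whereas your slices are conformal to $(Y,\gamma)$. No positive mass theorem is available for such ends. The paper's partial result circumvents this by a two-stage construction: it first builds a PSC-cobordism from $(\mathbf S^{n-1},\gamma)$ to a large round sphere $(\mathbf S^{n-1},\lambda^{2}\gamma_{\rm std})$ using the quasi-spherical equation of Section~\ref{section2} (this is where the monotonicity of $\int H_t$ is proved, and where $\gamma$ enters the constant), and only then attaches a genuine hyperbolic quasi-spherical end with round cross-sections on which the AH PMT can be invoked. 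Your proposal collapses these two steps into one and thereby loses the AH structure. Likewise, your plan to use Theorem~\ref{pscextension1} to supply $H^{\mathrm{ref}}$ does not yield a monotone quantity: the monotonicity in Shi--Tam and in Lemma~\ref{AHPMTconfillin} depends on the reference being the mean curvature of the slice in a specific model space, not an arbitrary PSC fill-in.
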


There are several interesting works supporting this conjecture, for instance, \cite{ST02}, \cite{ST07}, \cite{MM}, \cite{MMT}. In particular, Conjecture \ref{conj0} was verified when $Y$ is a topological $\mathbf{S}^2$ and $H>0$ in \cite{MM}. In \cite{SWWZ}, the authors confirmed Conjecture \ref{conj0} for higher dimensional spheres with metrics PSC-isotopic to the standard one and $H>0$. See also \cite{J,JMT} for some delicate results on mean curvature function but closely related to the total mean curvature for NNSC fill-ins. In this paper, we show that Conjecture \ref{conj0} holds when $X$ is spin, $Y$ is diffeomorphic to a standard sphere and $H>0$ (see Theorem \ref{Gconjec1}).

To state the results about the total mean curvature of NNSC fill-ins more concisely and systematically, we introduce the following notions. 
\begin{defn}\label{fillininvariant}
For an orientable closed null-cobordant Riemannian manifold $(\Sigma^{n},\gamma)$, define 
\begin{equation*}
\Lambda\left(\Sigma^{n},\gamma\right)=\sup\left\{\int_{\Sigma^n}H\, d\mu_\gamma\,\Big |\,(\Sigma^{n},\gamma,H)\text{ admits a NNSC fill-in}\right\},
\end{equation*}
and
\begin{equation*}
\Lambda_+\left(\Sigma^{n},\gamma\right)=\sup\left\{\int_{\Sigma^n}H\,d\mu_\gamma\,\Big |\,H>0,\,(\Sigma^{n},\gamma,H)\text{ admits a NNSC fill-in}\right\}.
\end{equation*}
More generally, for $\kappa\in\mathbf R$, define
\begin{equation*}
\Lambda_{+,\,\kappa}\left(\Sigma^{n}, \gamma\right)=\sup\left\{\int_{\Sigma^n} H d \mu_{\gamma}\,\bigg |
\begin{array}{c}
H>0,\  \,\left(\Sigma^{n}, \gamma, H\right) \text { admits a fill-in}\\
\text{with scalar curvature } R\geq n(n+1)\kappa
\end{array}\right\}.
\end{equation*}
\end{defn}
\begin{rem}
Theorem \ref{pscextension1} guarantees that the $\Lambda$-invariant is well-defined. For $\Lambda_+$, the supremum of an empty set is conventionally $-\infty$.
Note that $\Lambda_+$ was already defined for orientable closed surfaces in \cite{MM}.
\end{rem}
By definition, $\Lambda_{+,\,0}=\Lambda_{+}$. And it is obvious that $
\Lambda_{+,\,\kappa}\geq \Lambda_{+}$ for any $\kappa\leq 0$. We find that $\Lambda_{+,\,\kappa}$ has a deep relationship with the positive mass theorems (PMT) for asymptotically hyperbolic (AH) manifolds and asymptotically flat (AF) manifolds. Let $\left(\mathbf{S}^{n-1}, \gamma_{\rm std}\right)$ be the  $(n-1)$-sphere with the standard metric and denote its volume by $\omega_{n-1}$. We find the following relation

\begin{thm}\label{pmt1.1}
$\Lambda_{+}\left(\mathbf{S}^{n-1}, \gamma_{\rm std}\right)=(n-1)\omega_{n-1}$ is equivalent to PMT holds for AF $n$-manifolds. 
\end{thm}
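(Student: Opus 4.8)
The plan is to establish the two implications separately, using the familiar correspondence between fill-ins of round spheres with prescribed positive mean curvature and either gluing constructions (for the "$\Lambda_+ = (n-1)\omega_{n-1}$ implies PMT" direction) or mass estimates coming from the exterior Schwarzschild region (for the converse).

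First, I would assume PMT for AF $n$-manifolds and prove $\Lambda_+(\mathbf S^{n-1},\gamma_{\rm std}) \le (n-1)\omega_{n-1}$; the reverse inequality $\Lambda_+(\mathbf S^{n-1},\gamma_{\rm std}) \ge (n-1)\omega_{n-1}$ is elementary, realized by Euclidean balls of shrinking radius (a ball of radius $r$ fills in $(\mathbf S^{n-1}, r^2\gamma_{\rm std}, (n-1)/r)$, whose total mean curvature is $(n-1)\omega_{n-1} r^{n-2} \to (n-1)\omega_{n-1}$ as $r\to\infty$ after rescaling to $\gamma_{\rm std}$ — more precisely one rescales so the boundary metric is exactly $\gamma_{\rm std}$ and lets the radius tend to infinity, or equivalently takes large round spheres in $\mathbf R^n$). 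For the nontrivial inequality: given any NNSC fill-in $(\Omega, g)$ of $(\mathbf S^{n-1}, \gamma_{\rm std}, H)$ with $H>0$, I would glue to $\partial\Omega$ an exterior region of a (suitably rotationally symmetric) AF end — for instance the exterior of a coordinate sphere in a Schwarzschild metric, or a doubling/Shi–Tam-type collar — chosen so that the boundary metric matches $\gamma_{\rm std}$ and so that the mean curvature from the outside is at least $H$. By the Shi–Tam-type monotonicity / the weak formulation of matching along corners, this produces an AF manifold with nonnegative scalar curvature (in the distributional sense at the gluing hypersurface, which is admissible because the mean curvature from inside dominates that from outside) whose ADM mass is controlled from above by $(n-1)\omega_{n-1} - \int_{\mathbf S^{n-1}} H\,d\mu$ up to the dimensional normalization; PMT then forces this to be nonnegative, giving $\int H \le (n-1)\omega_{n-1}$.

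For the converse, I would assume $\Lambda_+(\mathbf S^{n-1},\gamma_{\rm std}) = (n-1)\omega_{n-1}$ and deduce PMT. Given an AF $n$-manifold $(M,g)$ with $R_g \ge 0$, suppose for contradiction its ADM mass is negative. Using the negative mass one can find, far out in the end, a large region whose inner boundary is a nearly-round sphere with total mean curvature strictly exceeding $(n-1)\omega_{n-1}$ after rescaling the boundary metric to exactly $\gamma_{\rm std}$; more carefully, one takes a large coordinate sphere $S_\rho$, notes that its induced metric is $C^2$-close to $\rho^2\gamma_{\rm std}$ and its mean curvature is close to $(n-1)/\rho + (\text{mass term})/\rho^{n-1}$, and the compact region inside $S_\rho$ is a NNSC fill-in; a conformal rescaling to normalize the boundary metric to $\gamma_{\rm std}$ then yields a fill-in of $(\mathbf S^{n-1}, \gamma_{\rm std}, \widetilde H)$ with $\int \widetilde H > (n-1)\omega_{n-1}$ when the mass is negative, contradicting the hypothesis. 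The delicate point here is that rescaling the boundary metric to be exactly round requires a global conformal change on the filled region that must preserve $R\ge 0$; I would instead work with the asymptotic expansion directly and observe that the quantity $(n-1)\omega_{n-1} - \int_{S_\rho} H\, d\mu_{g}$, computed with the actual induced metric and then compared to the round normalization, converges to a positive multiple of the ADM mass as $\rho\to\infty$ (this is essentially the Brown–York / Wang–Yau-type expansion), so negative mass directly produces the desired contradiction in the limit.

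The main obstacle I anticipate is making the gluing in the first direction rigorous: one needs an exterior AF filling of $(\mathbf S^{n-1}, \gamma_{\rm std}, H)$ for an \emph{arbitrary} positive function $H$ (not just constant or close to constant), with nonnegative scalar curvature outside and with ADM mass exactly $(n-1)\omega_{n-1} - \int H\,d\mu$ (up to normalization) — this is precisely the content of a Shi–Tam / Mantoulidis–Miao–Tam type construction, and invoking it correctly (including the corner condition $H_{\rm inside} \ge H_{\rm outside}$, which here is an equality by design, and the resulting Bartnik-mass-type inequality) is where the real work lies. If such an exterior filling with the sharp mass is not available off the shelf, one falls back on the weaker statement that the Bartnik mass of $(\mathbf S^{n-1},\gamma_{\rm std},H)$ is at least $\tfrac{1}{2(n-1)\omega_{n-1}}\big((n-1)\omega_{n-1}-\int H\big)$-type bound, which still suffices to close the argument via PMT.
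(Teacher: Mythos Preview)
Your outline for the direction ``PMT $\Rightarrow \Lambda_+(\mathbf S^{n-1},\gamma_{\rm std})=(n-1)\omega_{n-1}$'' is essentially the paper's: one invokes the Shi--Tam quasi-spherical exterior with inner boundary data $(\mathbf S^{n-1},\gamma_{\rm std},H)$ and uses nonnegativity of its ADM mass to bound $\int H$. (Your lower bound paragraph is garbled, however: the unit ball in $\mathbf R^n$ already realizes $\int H=(n-1)\omega_{n-1}$ exactly; no limiting procedure is needed.)

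The genuine gap is in the converse. You correctly see that the compact region inside a large coordinate sphere $S_r$ is an NNSC fill-in of $(\mathbf S^{n-1},\gamma_r,H_r)$ and that the asymptotics of $\int_{S_r}H_r$ encode the mass, but the induced metric $\gamma_r$ is \emph{not} a round metric: after rescaling it has the form $\gamma_{\rm std}+\beta$ with $\beta=O(r^{1-n})$. The definition of $\Lambda_+$ therefore gives $\int_{S_r}H_r\le\Lambda_+(\mathbf S^{n-1},\gamma_r)$, and the hypothesis tells you nothing about the right-hand side. Your two proposed fixes both fail: a conformal change making the boundary exactly round will not preserve $R\ge 0$ (as you note), and ``working with the asymptotic expansion directly'' still requires comparing $\Lambda_+(\mathbf S^{n-1},\gamma_{\rm std}+\beta)$ with $\Lambda_+(\mathbf S^{n-1},\gamma_{\rm std})$, for which you supply no mechanism.

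The paper's missing ingredient is a monotonicity lemma: if $\{\gamma_t\}$ is a smooth increasing path of NNSC metrics on $\Sigma$, then $\Lambda_+(\Sigma,\gamma_0)\le\Lambda_+(\Sigma,\gamma_1)$. This is proved by solving a quasi-spherical equation on the collar $\Sigma\times[0,1]$ with prescribed inner mean curvature and showing the total mean curvature increases along the foliation. Applying it to the paths $\gamma(s)=(1\pm s|\beta|_{L^\infty})\gamma_{\rm std}+(1-s)\beta$ sandwiches $\Lambda_+(\mathbf S^{n-1},\gamma_{\rm std}+\beta)$ between $(1\pm|\beta|_{L^\infty})^{(n-2)/2}\Lambda_+(\mathbf S^{n-1},\gamma_{\rm std})$, which is exactly the continuity statement you need. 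The paper explicitly singles this out as the replacement for the isometric-embedding arguments that handle $n=3$ but fail in higher dimensions. You also omit the rigidity half of PMT (mass zero forces $\mathbf R^n$), which the paper handles separately via standard conformal and Ricci-flow deformations.
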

This conclusion is also true for closed convex hypersurfaces in $\mathbf{R}^n$, see Theorem \ref{pmt2} below. For the notions of AF manifolds, ADM mass and the exact meaning of {\it ``PMT holds for AF manifolds''}, see Definition \ref{AFmanifold} and Definition \ref{ADMmass}. Combining Corollary 2.1 in \cite{MST}, Theorem 1.2 in \cite{WM} and Theorem \ref{pmt1.1}, we have the following interesting corollary:   	 
\begin{cor}
If $\Lambda_{+}\left(\mathbf{S}^{n-1}, \gamma_{\rm std}\right)$ can be achieved, then it is achieved only by $\mathbf D^n$ with the flat metric; $\Lambda_{+}\left(\mathbf{S}^{n-1}, \gamma_{\rm std}\right)$ can be achieved is equivalent to PMT holds for AF $n$-manifolds.	  	  
\end{cor}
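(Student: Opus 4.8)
The plan is to combine Theorem~\ref{pmt1.1} with the two rigidity-type results cited from the literature. First I would observe that the second assertion is essentially immediate: if $\Lambda_{+}(\mathbf{S}^{n-1},\gamma_{\rm std})$ is achieved by some NNSC fill-in with positive mean curvature, then in particular the value $(n-1)\omega_{n-1}$ is attained, hence \emph{a fortiori} $\Lambda_{+}(\mathbf{S}^{n-1},\gamma_{\rm std})=(n-1)\omega_{n-1}$; Theorem~\ref{pmt1.1} then gives that PMT holds for AF $n$-manifolds. Conversely, if PMT holds for AF $n$-manifolds, then the flat unit disk $\mathbf D^n$ is itself a NNSC fill-in of $(\mathbf{S}^{n-1},\gamma_{\rm std},H\equiv n-1)$ realizing $\int_{\mathbf S^{n-1}}H\,d\mu = (n-1)\omega_{n-1}=\Lambda_{+}(\mathbf{S}^{n-1},\gamma_{\rm std})$, so the supremum is achieved. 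This yields the stated equivalence.

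For the first assertion, suppose $(\Omega,g)$ is a NNSC fill-in of $(\mathbf{S}^{n-1},\gamma_{\rm std},H)$ with $H>0$ attaining $\Lambda_{+}(\mathbf{S}^{n-1},\gamma_{\rm std})=(n-1)\omega_{n-1}$. The idea is to run the Bartnik-type gluing used in the proof of Theorem~\ref{pmt1.1}: glue $(\Omega,g)$ to the exterior region of a Schwarzschild-type AF end along the round sphere, producing an AF manifold whose ADM mass equals the deficit between $(n-1)\omega_{n-1}$ and $\int_{\mathbf S^{n-1}} H\,d\mu_{\gamma_{\rm std}}$ (after the appropriate matching of the Hawking/Brown–York-type quasi-local mass term); since the fill-in is mass-maximizing, this deficit is zero and the glued manifold has zero ADM mass. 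By the rigidity case of the positive mass theorem — this is where Corollary~2.1 of \cite{MST} enters, controlling the matching hypersurface — one concludes the glued manifold is flat $\mathbf R^n$, and then Theorem~1.2 of \cite{WM} forces $(\Omega,g)$ to be the flat disk $\mathbf D^n$. Thus the only maximizer is $\mathbf D^n$ with the flat metric.

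The main obstacle I anticipate is the rigidity step: extracting flatness of $(\Omega,g)$ from the vanishing of the ADM mass of the glued manifold is not purely formal, because the glued metric is only Lipschitz across the matching sphere and one must invoke the corner-type positive mass results (the role of \cite{MST}) to ensure that equality in PMT propagates through the corner. One must verify that the mean-curvature jump condition at the corner is compatible with the rigidity statement — i.e.\ that equality in the relevant Shi–Tam-type monotonicity forces the corner to be smooth and the interior to be flat — and then that \cite{WM} applies to identify the filled region with $\mathbf D^n$. Once these two external inputs are correctly aligned with the normalization $H\equiv n-1$ on the unit round sphere, the argument closes. The remaining bookkeeping (volumes, the constant $\omega_{n-1}$, outward-normal sign conventions) is routine and I would not belabor it.
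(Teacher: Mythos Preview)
Your argument for the forward direction of the equivalence (achievability $\Rightarrow$ PMT) has a genuine gap. You write that if $\Lambda_+(\mathbf{S}^{n-1},\gamma_{\rm std})$ is achieved then ``in particular the value $(n-1)\omega_{n-1}$ is attained, hence \emph{a fortiori} $\Lambda_+=(n-1)\omega_{n-1}$''. But achievability only tells you the supremum is a maximum --- it says nothing about its \emph{value}. A priori the achieved value could strictly exceed $(n-1)\omega_{n-1}$; indeed, that is exactly what would happen if PMT failed. So you cannot invoke Theorem~\ref{pmt1.1} at this stage. Your gluing argument for the first assertion inherits the same circularity: it relies on the rigidity case of PMT, but PMT is precisely what you are trying to show is equivalent to achievability.

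You have also misread the role of \cite{MST}. Corollary~2.1 there is not a corner-smoothing statement; it is a consequence of the first-variation formula for $\int_{\partial\Omega} H$ over fill-ins with fixed boundary metric, and it yields a rigidity constraint on any maximizer (scalar-flatness and, ultimately, Ricci-flatness) --- crucially, \emph{without} assuming PMT. Theorem~1.2 of \cite{WM} is then a boundary-rigidity result under a nonnegative-Ricci hypothesis which identifies such a manifold with round-sphere boundary as the flat $\mathbf{D}^n$. Together these two inputs pin down any maximizer as the flat disk \emph{directly}; from this $\Lambda_+=(n-1)\omega_{n-1}$ follows, and PMT then follows from Theorem~\ref{pmt1.1}. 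The converse direction is, as you note, immediate. In short: the identification of the maximizer must come \emph{before} PMT in the logical chain, not after it.
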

   
Recently, Chru\'sciel-Delay \cite{CD} proved that the AF spacetime positive mass theorem implies the hyperbolic positive energy theorem. The AF spacetime positive mass theorem has been proved by Eichmair-Huang-Lee-Schoen \cite{EHLS} for dimensions not greater than $7$. By virtue of $\Lambda_{+,\,\kappa}$, we also build a bridge between the two PMTs. 

\begin{thm}\label{ahafpmt1}
PMT  holds for AH $n$-manifolds implies PMT holds for AF $n$-manifolds.
\end{thm}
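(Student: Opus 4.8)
The plan is to argue by contradiction: assume the positive mass theorem fails for some asymptotically flat $n$-manifold, and use this failure to manufacture an asymptotically hyperbolic manifold with negative mass, contradicting the PMT for AH manifolds. The bridge between the two settings is the invariant $\Lambda_{+,\,\kappa}$, which we will exploit for a small negative $\kappa$. First I would recall the heuristic: an AF manifold with a counterexample to PMT has negative ADM mass, so after suitable gluing/deformation one can produce a compact domain $(\Omega,g_0)$ with $R_{g_0}\geq 0$ whose boundary is a large coordinate sphere with very small (in fact smaller than the Euclidean value) total mean curvature. Quantitatively, failure of PMT for AF $n$-manifolds should be shown equivalent to $\Lambda_+\left(\mathbf S^{n-1},\gamma_{\mathrm{std}}\right)>(n-1)\omega_{n-1}$, which is precisely (the non-rigid half of) Theorem \ref{pmt1.1}; so it suffices to produce, from the AH-PMT, the bound $\Lambda_+\left(\mathbf S^{n-1},\gamma_{\mathrm{std}}\right)\leq (n-1)\omega_{n-1}$.

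The key geometric step is a \emph{conformal change / deformation} that converts a NNSC fill-in of a round sphere into a NNSC-or-slightly-negatively-curved fill-in which can then be capped off to form an AH manifold. Concretely, given a fill-in $(\Omega,g)$ of $(\mathbf S^{n-1},\gamma_{\mathrm{std}},H)$ with $H>0$ and $R_g\geq 0$, I would attach to $\partial\Omega$ a collar or a model piece of the hyperbolic (Schwarzschild-AdS type) end so that the resulting complete manifold $(\widetilde M,\widetilde g)$ is asymptotically hyperbolic; the curvature condition $R_g\geq 0 \geq n(n-1)(-1)=R_{\mathbb H^n}$ must be preserved across the gluing interface, which is where one pays attention to the mean curvature matching and uses a smoothing à la Miao/Shi-Tam. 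The AH mass of $\widetilde M$ will then be an explicit decreasing function of $\int_{\mathbf S^{n-1}}H\,d\mu$ minus $(n-1)\omega_{n-1}$ (up to normalization), so that $\int H > (n-1)\omega_{n-1}$ forces negative AH mass, contradicting AH-PMT. Running this for all such fill-ins yields $\Lambda_+\left(\mathbf S^{n-1},\gamma_{\mathrm{std}}\right)\leq (n-1)\omega_{n-1}$, hence the reverse inequality is an equality, hence — by the equivalence in Theorem \ref{pmt1.1} — PMT holds for AF $n$-manifolds.

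The main obstacle I anticipate is the gluing/interpolation across the interface: one must interpolate between the fill-in metric $g$ near $\partial\Omega$ and the hyperbolic model metric while (i) keeping the scalar curvature $\geq -n(n-1)$, and (ii) controlling the mass contribution of the transition region so the final mass inequality is sharp. This is delicate because the naive smoothing of a metric across a hypersurface with a mean-curvature jump only works in the ``good'' direction of the jump; here the sign of $H$ relative to the model's mean curvature must cooperate, and one typically needs a Shi–Tam-type monotonicity (foliation by constant-mean-curvature or by metric spheres with a quasi-local-mass monotone along the foliation) to guarantee the correct inequality without losing a constant. A secondary technical point is ensuring the constructed end is genuinely asymptotically hyperbolic in the sense of Definition \ref{AFmanifold} (the AH analogue) with a well-defined mass, which requires choosing the model end (e.g. the anti-de Sitter–Schwarzschild metric with the appropriate horizon radius) so that its boundary data matches $(\mathbf S^{n-1},\gamma_{\mathrm{std}},H)$ exactly, and then verifying the decay of $\widetilde g$ to the hyperbolic metric. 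Once the monotonicity and the asymptotics are in hand, the contradiction with AH-PMT is immediate.
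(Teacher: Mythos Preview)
Your overall strategy---reduce via Theorem~\ref{pmt1.1} to the inequality $\Lambda_+(\mathbf S^{n-1},\gamma_{\rm std})\leq (n-1)\omega_{n-1}$, then derive that inequality from AH-PMT by attaching a hyperbolic-type end to an arbitrary NNSC fill-in and invoking a Shi--Tam monotonicity---is exactly the route the paper takes. The paper carries this out by solving a quasi-spherical equation on $\mathbf S^{n-1}\times[r_0,\infty)$ to build a scalar-curvature $-n(n-1)$ end with prescribed inner boundary data, showing a Brown--York--type mass $m(r)$ is monotone, and applying AH-PMT (in its corners version, via \cite{BQ}) to conclude.

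There is, however, a genuine gap in the constant, and it is not where you locate it. If you glue the \emph{standard} hyperbolic end ($\kappa=-1$) onto a fill-in of the \emph{unit} round sphere, the relevant model boundary is the geodesic sphere of radius $\arcsinh 1$ in $\mathbb H^n$, whose mean curvature is $(n-1)\coth(\arcsinh 1)=(n-1)\sqrt 2$. The monotonicity then yields only $\int_{\mathbf S^{n-1}}H\,d\mu\leq (n-1)\omega_{n-1}\sqrt 2$, i.e.\ $\Lambda_{+,-1}(\mathbf S^{n-1},\gamma_{\rm std})\leq (n-1)\omega_{n-1}\sqrt 2$, which is too weak. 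So the difficulty is not the smoothing across the interface (that works cleanly with matched mean curvatures) but the mismatch between Euclidean and hyperbolic model mean curvatures at a fixed scale. The paper fixes this precisely through the ``small negative $\kappa$'' idea you mention but do not implement: it first proves (Lemma~\ref{AHPMTconfillin}) that AH-PMT gives
\[
\Lambda_{+,-1}\bigl(\mathbf S^{n-1},\lambda^2\gamma_{\rm std}\bigr)=(n-1)\omega_{n-1}\lambda^{n-2}\sqrt{1+\lambda^2}\quad\text{for every }\lambda>0,
\]
then rescales to get $\Lambda_{+,\kappa}(\mathbf S^{n-1},\gamma_{\rm std})=(n-1)\omega_{n-1}\sqrt{1-\kappa}$ for every $\kappa<0$, uses the trivial inequality $\Lambda_+\leq\Lambda_{+,\kappa}$, and finally lets $\kappa\to 0^-$ to kill the factor $\sqrt{1-\kappa}$. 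Your proposal needs this scaling-and-limit step made explicit; without it the argument stalls at the non-sharp bound.
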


For the definition of AH manifolds and the exact meaning of  {\it ``PMT holds for AH manifolds''}, see Definition \ref{AHdef}. 

A crucial ingredient of the proof of Theorem \ref{ahafpmt1} is the more delicate relation, i.e., {\it if for some $\lambda_0>0$,} $$\Lambda_{+,\,-1}\left(\mathbf{S}^{n-1}, \lambda_0^2\gamma_{\rm std}\right)=(n-1)\omega_{n-1}\sqrt{1+\lambda_0^2},$$
{\it then the same equality holds for all $\lambda\leq\lambda_0$. And this implies $$\Lambda_{+}\left(\mathbf{S}^{n-1}, \gamma_{\rm std}\right)=(n-1)\omega_{n-1},$$ which is equivalent to the PMT for AF $n$-manifolds. On the other hand, if PMT holds for AH manifolds, then 
$$\Lambda_{+,\,-1}\left(\mathbf{S}^{n-1}, \lambda^2\gamma_{\rm std}\right)=(n-1)\omega_{n-1}\sqrt{1+\lambda^2},$$ 
for all $\lambda>0$.}
We will investigate the reverse relation between the two PMTs elsewhere.

 With these things in mind, it might be important to find explicit estimates for $\Lambda_+$-invariant or $\Lambda$-invariant. Generally, it won't be easy. To us, a natural problem is  the following conjecture by Gromov (see p.31 in \cite{Gro19}).

\begin{conj}\label{conj1}
Let $(X,g)$ be a compact Riemannian manifold with boundary $Y$ and nonnegative scalar curvature. Suppose $Y$ is $\lambda$-bi-Lipschitz homeomorphic to the unit sphere $\mathbf{S}^{n-1}$. Let $H$ denote the mean curvature of $Y$ with respect to the outward normal. Then
\begin{equation*}
\int_{Y}H\,d\mu \leq C(\lambda)(n-1)\omega_{n-1},	
\end{equation*}
where $C(\lambda)$ is a constant depending only on $\lambda$ and $C(\lambda)\rightarrow 1$ as $\lambda\rightarrow 1$.
\end{conj}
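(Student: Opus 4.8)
\bigskip

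\noindent\textbf{A proof strategy for Conjecture \ref{conj1}.} I will outline a proof of Conjecture \ref{conj1} under the additional hypotheses $H>0$ and, when $n\ge8$, that $X$ is spin --- the same type of restriction under which Conjecture \ref{conj0} is settled above. The plan is to reduce the conjecture to an intrinsic upper bound for the fill-in invariant and then to estimate $\Lambda_+(\mathbf{S}^{n-1},\gamma)$, for a metric $\gamma$ that is $\lambda$-bi-Lipschitz to $\gamma_{\rm std}$, by comparison with a round sphere, whose invariant is pinned down by Theorem \ref{pmt1.1}. After transporting $\gamma:=g|_Y$ to $\mathbf{S}^{n-1}$ by the bi-Lipschitz homeomorphism we may assume $\lambda^{-2}\gamma_{\rm std}\le\gamma\le\lambda^2\gamma_{\rm std}$. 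Since $(X,g)$ is itself a NNSC fill-in of $(Y,\gamma,H)$ with $H>0$, the definition of $\Lambda_+$ gives $\int_Y H\,d\mu\le\Lambda_+(\mathbf{S}^{n-1},\gamma)$, so it is enough to prove
\begin{equation*}
\Lambda_+(\mathbf{S}^{n-1},\gamma)\le\lambda^{n-2}(n-1)\omega_{n-1}.
\end{equation*}
The right-hand side is precisely $\Lambda_+(\mathbf{S}^{n-1},\lambda^2\gamma_{\rm std})$: rescaling a NNSC fill-in by a constant $c$ carries $(\Sigma,\gamma,H)$ to $(\Sigma,c^2\gamma,c^{-1}H)$, preserves $R\ge0$, and multiplies the total mean curvature of the $(n-1)$-dimensional boundary by $c^{n-2}$, whence $\Lambda_+(\mathbf{S}^{n-1},c^2\gamma_{\rm std})=c^{n-2}\Lambda_+(\mathbf{S}^{n-1},\gamma_{\rm std})$, while $\Lambda_+(\mathbf{S}^{n-1},\gamma_{\rm std})=(n-1)\omega_{n-1}$ by Theorem \ref{pmt1.1} together with the positive mass theorem (unconditional for $n\le7$ by Schoen-Yau, and valid in all dimensions via Witten's argument when the fill-ins involved are spin, e.g.\ when $X$ is spin). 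Thus Conjecture \ref{conj1} would follow with $C(\lambda)=\lambda^{n-2}$ once one establishes the monotonicity
\begin{equation*}
\gamma_1\le\gamma_2\ \text{ on }\ \mathbf{S}^{n-1}\ \Longrightarrow\ \Lambda_+(\mathbf{S}^{n-1},\gamma_1)\le\Lambda_+(\mathbf{S}^{n-1},\gamma_2),
\end{equation*}
or an appropriate bi-Lipschitz variant of it.

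To prove this monotonicity I would start from a NNSC fill-in $(\Omega,g)$ of $(\mathbf{S}^{n-1},\gamma_1,H)$ with $H>0$ and glue onto $\partial\Omega$ a \emph{transition collar} $N=\mathbf{S}^{n-1}\times[0,1]$ carrying a metric $\bar g$ with $R_{\bar g}\ge0$, $\bar g|_{t=0}=\gamma_1$, $\bar g|_{t=1}=\gamma_2$, and $\bar g=dt^2+g_t$ near $t=0$ with $\partial_t g_t|_{t=0}=0$. The last condition makes the inner face of $N$ totally geodesic, so the corner-smoothing results of Miao and of Shi-Tam --- which only ask that the mean curvatures of the two sides along the interface (each with respect to its outward normal) sum to a nonnegative function, here just $H\ge0$ --- apply directly and turn $\Omega\cup N$ into a genuine NNSC fill-in of $(\mathbf{S}^{n-1},\gamma_2,H_1)$, where $H_1=\tfrac12\tr_{\gamma_2}(\partial_t g_t)|_{t=1}$ can be made positive by taking $\bar g$ expanding near $t=1$. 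It then remains to choose $\bar g$ so that $\int_{\mathbf{S}^{n-1}}H_1\,d\mu_{\gamma_2}\ge\int_{\mathbf{S}^{n-1}}H\,d\mu_{\gamma_1}$, and this is the crux. The naive interpolation $dt^2+\bigl((1-t)\gamma_1+t\gamma_2\bigr)$ does not obviously do the job: the scalar curvature of $dt^2+g_t$ equals the slice curvature $R_{g_t}$ plus terms that are quadratic in, and a $t$-derivative of, $\partial_t g_t$; the latter become small when the collar is lengthened, but $R_{g_t}$ need not be nonnegative for intermediate $g_t$ and is unaffected by reparametrising $t$. Getting around this --- by a more careful path of metrics from $\gamma_1$ to $\gamma_2$ (when $\gamma_2=\lambda^2\gamma_{\rm std}$ and $\gamma_1$ is PSC-isotopic to it, such a path is exactly what makes the argument of \cite{SWWZ} work), by a warped or conformal correction exploiting $\gamma_1\le\gamma_2$, or by invoking the flexibility behind Theorem \ref{pscextension1} to fill the collar with a PSC metric realising the prescribed boundary data --- is the main obstacle, and is exactly why only partial affirmative answers to Conjecture \ref{conj1} are currently available.

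There is a cleaner alternative whenever $(\mathbf{S}^{n-1},\gamma)$ admits an isometric embedding into $\mathbf{R}^n$ as a convex hypersurface $\Sigma_0$ bounding a convex body $K$ --- for instance whenever $\gamma$ is sufficiently $C^2$-close to $\gamma_{\rm std}$ --- which avoids the collar entirely. Indeed, the Shi-Tam/Brown-York rigidity underlying Theorem \ref{pmt2}, itself a consequence of the positive mass theorem, gives $\int_{\mathbf{S}^{n-1}}H\,d\mu_\gamma\le\int_{\Sigma_0}H_{\Sigma_0}\,d\mu$ for every NNSC fill-in of $(\mathbf{S}^{n-1},\gamma,H)$ with $H>0$, hence $\Lambda_+(\mathbf{S}^{n-1},\gamma)\le\int_{\Sigma_0}H_{\Sigma_0}\,d\mu=n(n-1)W_2(K)$, with $W_2$ the second quermassintegral. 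Since quermassintegrals are monotone under inclusion of convex bodies and continuous in the Hausdorff distance, and since $\gamma$ being $\lambda$-bi-Lipschitz to $\gamma_{\rm std}$ forces $K$ (after a rigid motion) to converge to a unit ball as $\lambda\to1$, one obtains $\Lambda_+(\mathbf{S}^{n-1},\gamma)\le C(\lambda)(n-1)\omega_{n-1}$ with $C(\lambda)\to1$; making $C(\lambda)$ explicit then amounts to a quantitative stability estimate for convex bodies whose boundary is intrinsically nearly round. The drawback is that a general $\lambda$-bi-Lipschitz-to-round metric on $\mathbf{S}^{n-1}$ need not be realised by any convex --- or even embedded --- hypersurface in $\mathbf{R}^n$, which returns one to the collar construction in the general case.
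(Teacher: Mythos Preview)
This statement is a conjecture; the paper does not prove it in full but supplies partial results (the surface case under $K_\gamma\ge0$ and $H>0$ via Proposition~\ref{2dboundedness} and Theorem~\ref{2dstability}, and higher-dimensional continuity under extra regularity via Theorems~\ref{continuity1-re0} and~\ref{main0}). Your overall reduction---bound $\int_Y H$ by $\Lambda_+(\mathbf S^{n-1},\gamma)$ and then compare the latter with $\Lambda_+(\mathbf S^{n-1},\lambda^2\gamma_{\rm std})$ via a monotonicity principle---is exactly the paper's strategy, and your identification of the obstruction (no control on $R_{\gamma_t}$ along a naive interpolation) is correct: the paper's monotonicity lemma (Lemma~\ref{c0monotonicity}) requires a monotone path of \emph{NNSC} metrics precisely for this reason.

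There is, however, a genuine gap in your collar construction that is separate from the NNSC obstacle. You make the inner face of $N$ totally geodesic and thereby fix the collar independently of the fill-in mean curvature $H$; then $H_1$ is determined by the collar alone, so the inequality $\int H_1\,d\mu_{\gamma_2}\ge\int H\,d\mu_{\gamma_1}$ cannot be arranged uniformly over all admissible $H$---the right-hand side ranges up to $\Lambda_+(\gamma_1)$, which is what you are trying to bound. The paper's mechanism is different: for each given $H$ one takes $g=u^2\,dt^2+\gamma_t$ with $u$ solving the quasi-spherical equation~\eqref{Eq: quasi spherical equation 2} subject to $u(\cdot,0)=\bar H_0/H$. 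This matches the inner mean curvature of the collar to $H$ exactly (so there is no corner at all), forces $R_g\equiv0$, and a direct second-variation computation then yields $\frac{d}{dt}\int_{\Sigma_t}H_t\,d\mu_{\gamma_t}\ge0$. The hypothesis $R_{\gamma_t}\ge0$ enters twice here---to keep the $u^3$-coefficient nonpositive so that $u$ does not blow up, and to give the derivative the right sign.

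Your convex-hypersurface alternative is essentially how the paper handles $n=3$ (Section~\ref{section5}): identify $\Lambda_+(\mathbf S^2,\gamma)$ with the total mean curvature of a $C^{1,1}$ isometric embedding in $\mathbf R^3$ (Theorem~\ref{c111}), then use quermassintegral monotonicity (Lemma~\ref{monoquermassintegral}) and Pogorelov rigidity to get continuity in the Gromov--Hausdorff topology. For $n\ge4$ the paper does not attempt an extrinsic embedding; instead it manufactures the needed monotone PSC path by running Ricci--DeTurck flow from $\gamma$ and from $\gamma_{\rm std}$ and interpolating linearly after a short time, when the two flows are $C^2$-close (Theorem~\ref{localconnectednesspsc} and the proof of Theorem~\ref{continuity1-re}). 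This is where the additional hypotheses---$R_\gamma>0$ together with a uniform $W^{1,p}$-bound, or a Ricci lower bound plus injectivity radius---are spent, and these, rather than mere $\lambda$-bi-Lipschitz closeness, are what currently limit the higher-dimensional result.
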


We begin to explore the above conjecture for the surface case. Under the condition of nonnegative Gauss curvature and positive mean curvature, we can prove that $\Lambda_+(\mathbf{S}^2, \gamma)$  is close to $\Lambda_+(\mathbf{S}^2, \gamma_0)$ provided $(\mathbf{S}^2, \gamma)$  is close to $(\mathbf{S}^2, \gamma_0)$ in the Gromov-Hausdorff topology. For explicit statement, see Theorem \ref{2dstability} and Remark \ref{GHrem}.

In \cite{MM}, $\Lambda_+(\Sigma^2,\gamma)$ was used to define Brown-York mass for closed surfaces. In this paper, we generalize it to higher dimensions. For a triple of Bartnik data $(\Sigma^{n-1},\gamma,H)$, we define its  generalized Brown-York mass  by
$$m_{\rm BY}\left(\Sigma^{n-1},\gamma,H\right)=\frac{1}{(n-1)\omega_{n-1}}\left(\Lambda_+\left(\Sigma^{n-1},\gamma\right)-\int_{\Sigma^{n-1}}H\, d\mu_\gamma\right).$$
When considering an oriented compact Riemannian manifold $(\Omega^n,g)$ with boundary, we define the generalized Brown-York mass by
$$m_{\rm BY}\left(\partial \Omega^n,g\right)=\frac{1}{(n-1)\omega_{n-1}}\left(\Lambda_+\left(\partial\Omega^n,g|_{\partial\Omega}\right)-\int_{\partial\Omega}H_g\, d\mu_{g|_{\partial\Omega}}\right).$$

A key point to prove Theorem \ref{pmt1.1} is to show the limit of the generalized Brown-York mass of the large coordinate spheres is the ADM mass of the ambient manifold. In surface case, i.e., $n=3$, that can be achieved by arguments from isometric embedding theory (see \cite{FST}). But in our current case, such techniques fail; we turn to use monotonicity lemma (Lemma \ref{c0monotonicity}) to overcome this difficulty. 

By analyzing the asymptotic behavior of the generalized Brown-York mass at the infinity of asymptotically Schwarzschild (AS) manifolds, we obtain another equivalent statement of the PMT for AF $n$-manifolds: there exists an AS $n$-manifold of which the large sphere limit of the generalized Brown-York mass is finite. For details, see Remark \ref{gbylimitas}.  

From the results in \cite{ST02} and Theorem \ref{c111}, we see that $m_{\rm BY}(\partial \Omega^3,g)$ is just the same as the original definition in \cite{BrownYork-1, BrownYork-2} when the Gauss curvature of $\partial \Omega^3$ is quasi-positive (nonnegative and positive somewhere). As a corollary of Theorem \ref{2dstability}, we have:

\begin{cor}\label{cby}
Let $\Omega$ be a compact $3$-manifold with boundary and $\{g_i\}$ a sequence of smooth metrics on $\Omega$. Assume the Gauss curvature of $\partial\Omega$ with respect to $g_i$ is quasi-positive. If $g_i$ converges to a smooth metric $g$ in the $C^1$-topology, then $m_{\rm BY}(\partial \Omega, g_i)$ converges to $m_{\rm BY}(\partial \Omega, g)$.
\end{cor}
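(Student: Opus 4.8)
The plan is to split $m_{\rm BY}$ into its two constituents and treat them separately: the total-mean-curvature term by hand, and the $\Lambda_{+}$ term by invoking Theorem \ref{2dstability}. Write $n=3$, so that $(n-1)\omega_{n-1}=2\omega_{2}=8\pi$, and set $\gamma_{i}=g_{i}|_{\partial\Omega}$, $\gamma=g|_{\partial\Omega}$. Since the Gauss curvature of $(\partial\Omega,\gamma_{i})$ is quasi-positive, Gauss--Bonnet forces each component of $\partial\Omega$ to have positive Euler characteristic, hence to be a $2$-sphere; for notational simplicity I would restrict attention to the connected case $\partial\Omega\cong\mathbf S^{2}$, the general one being handled component by component. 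By definition,
$$m_{\rm BY}(\partial\Omega,g_{i})=\frac{1}{8\pi}\left(\Lambda_{+}(\mathbf S^{2},\gamma_{i})-\int_{\partial\Omega}H_{g_{i}}\,d\mu_{\gamma_{i}}\right),$$
so it is enough to prove that each term on the right converges to the corresponding term for $g$.

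For the mean-curvature term, I would observe that the second fundamental form of $\partial\Omega$ in $(\Omega,g_{i})$ --- and hence $H_{g_{i}}$ and the area element of $\gamma_{i}$ --- is a first-order expression in $g_{i}$ (in boundary-adapted coordinates it involves only $g_{i}$, its inverse, and $\partial g_{i}$). Therefore $C^{1}$-convergence $g_{i}\to g$ forces $\gamma_{i}\to\gamma$ in $C^{1}(\partial\Omega)$, $H_{g_{i}}\to H_{g}$ uniformly, and $d\mu_{\gamma_{i}}\to d\mu_{\gamma}$ uniformly, whence $\int_{\partial\Omega}H_{g_{i}}\,d\mu_{\gamma_{i}}\to\int_{\partial\Omega}H_{g}\,d\mu_{\gamma}$. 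This step is elementary.

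The real content is the convergence $\Lambda_{+}(\mathbf S^{2},\gamma_{i})\to\Lambda_{+}(\mathbf S^{2},\gamma)$, for which I would apply Theorem \ref{2dstability} in the Gromov--Hausdorff form recorded in Remark \ref{GHrem}; this requires checking its hypotheses for the pair $(\gamma_{i},\gamma)$. Uniform convergence $\gamma_{i}\to\gamma$ certainly gives Gromov--Hausdorff convergence. The one point that is not immediate is that the limiting metric $\gamma$ still has nonnegative Gauss curvature: $C^{1}$-control of $g_{i}$ does not see curvature, so this cannot be read off the convergence directly. Here I would use that each $(\mathbf S^{2},\gamma_{i})$, having $K_{\gamma_{i}}\geq 0$, is an Alexandrov space of curvature $\geq 0$, that a lower curvature bound in the Alexandrov sense is preserved under Gromov--Hausdorff limits, and that for a smooth limit this bound is equivalent to $K_{\gamma}\geq 0$; Gauss--Bonnet then upgrades this to quasi-positivity, since $\int_{\mathbf S^{2}}K_{\gamma}\,d\mu_{\gamma}=4\pi>0$. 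With the hypotheses verified, Theorem \ref{2dstability} yields $\Lambda_{+}(\mathbf S^{2},\gamma_{i})\to\Lambda_{+}(\mathbf S^{2},\gamma)$, and combining with the previous paragraph completes the argument.

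The main obstacle is essentially external to the corollary: all of the analysis sits inside Theorem \ref{2dstability}, which is assumed. Within the corollary itself the only genuinely non-formal point is certifying nonnegative (hence quasi-positive) Gauss curvature of the limiting boundary metric, and the Alexandrov-geometry stability of lower curvature bounds under Gromov--Hausdorff limits is precisely the tool that supplies it.
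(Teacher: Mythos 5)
Your proof is correct and follows the same overall route as the paper: reduce to Theorem \ref{2dstability} after observing that the total mean curvature term converges trivially under $C^1$-convergence, with the only substantive issue being that the limiting boundary metric $\gamma=g|_{\partial\Omega}$ still has $K_\gamma\geq 0$. The one place where you genuinely diverge is the tool used to certify this: the paper quotes Gromov's $C^0$-limit theorem \cite{Gro14} (or Bamler's Ricci-flow proof of it \cite{Bam}), i.e.\ the stability of scalar curvature lower bounds under $C^0$-convergence of metrics, whereas you pass through Alexandrov geometry --- each $(\mathbf S^2,\gamma_i)$ is an Alexandrov space of curvature $\geq 0$, this is preserved under Gromov--Hausdorff limits, and a smooth Riemannian limit with that property has $K_\gamma\geq 0$. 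For surfaces these two routes prove the same fact; yours is arguably more elementary and self-contained (classical Alexandrov surface theory suffices, no need for the general-dimensional $C^0$-limit theorem), while the paper's citation is a one-line appeal to a result that works in all dimensions. Both are valid, and the remaining bookkeeping (Gauss--Bonnet forcing $\partial\Omega\cong\mathbf S^2$, convergence of $H_{g_i}$ and of the area elements from $C^1$-control of $g_i$) matches what the paper leaves implicit.
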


Note that the mean curvature $H_0$ of $\partial\Omega$ when isometric embedded in $\mathbf{R}^3$ is involved in the original definition of  Brown-York mass (\cite{BrownYork-1, BrownYork-2}). Therefore, in order to estimate the Brown-York mass, traditionally one has to estimate $H_0$, so high order derivatives convergence of $g_i$ is needed in this context. In above corollary, by using the monotonicity property of $\Lambda_+(\mathbf{S}^2,\gamma)$, we only need $C^1$-convergence of metrics. 

We  have similar results for higher dimensional cases. To state them concisely, we need some notations. Let $\mathcal M(\Sigma)$ be the space of all smooth Riemannian metrics on $\Sigma$, and 
$\mathcal M_{\rm psc}\left(\Sigma\right)=\left\{\gamma\in\mathcal M\left(\Sigma\right)\,|\,R_{\gamma}>0\right\}$. For $p>n$, $G>0$, define
$\mathcal M^n_{p,\,G}=\{\gamma\in\mathcal M_{\rm psc}(\mathbf S^n)\,|\,\|\nabla_{\gamma_{\rm std}}\gamma\|_{L^p(\mathbf S^n,\gamma_{\rm std})}\leq G\}$.

\begin{thm}\label{continuity1-re0}
Let $2\leq n\leq 6$. Given constants $p>n$ and $G>0$, for any $\varepsilon>0$, there is a $\delta(n,p,G,\varepsilon)>0$ such that for any $\gamma\in\mathcal M^n_{p,\,G}$ with $\|\gamma-\gamma_{\rm std}\|_{L^\infty(\mathbf{S}^{n},\gamma_{\rm std})}\leq\delta$, there holds 
\begin{equation*}
\left |\Lambda_+(\mathbf{S}^{n},\gamma)-\Lambda_+(\mathbf{S}^{n},\gamma_{\rm std})\right |\leq\varepsilon.
\end{equation*}	
\end{thm}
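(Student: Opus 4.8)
The plan is to sandwich $\Lambda_+(\mathbf{S}^n,\gamma)$ between $\Lambda_+\bigl(\mathbf{S}^n,(1-\eta)^2\gamma_{\rm std}\bigr)$ and $\Lambda_+\bigl(\mathbf{S}^n,(1+\eta)^2\gamma_{\rm std}\bigr)$ for a closeness parameter $\eta$, and to use that these two quantities are computable and tend to $\Lambda_+(\mathbf{S}^n,\gamma_{\rm std})$. Two facts anchor the argument. First, a scaling identity: if $(\Omega,g)$ is an NNSC fill-in of $(\mathbf{S}^n,\gamma,H)$ then $(\Omega,c^2g)$ is an NNSC fill-in of $(\mathbf{S}^n,c^2\gamma,c^{-1}H)$, and since $\int_{\mathbf{S}^n}c^{-1}H\,d\mu_{c^2\gamma}=c^{n-1}\int_{\mathbf{S}^n}H\,d\mu_\gamma$ we get $\Lambda_+(\mathbf{S}^n,c^2\gamma_{\rm std})=c^{n-1}\Lambda_+(\mathbf{S}^n,\gamma_{\rm std})\to\Lambda_+(\mathbf{S}^n,\gamma_{\rm std})$ as $c\to1$. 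Second, in the range $2\le n\le 6$ one has $\Lambda_+(\mathbf{S}^n,\gamma_{\rm std})=n\omega_n<\infty$: this is Theorem \ref{pmt1.1} applied with $n$ replaced by $n+1$, together with the validity of the positive mass theorem for asymptotically flat $(n+1)$-manifolds when $n+1\le7$. The dimensional restriction enters precisely here, and this finiteness is needed to make the comparison estimates effective.

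Next I would convert the hypotheses into a uniform closeness bound. Since $p>n$, the Sobolev embedding $W^{1,p}(\mathbf{S}^n)\hookrightarrow C^{0,1-n/p}(\mathbf{S}^n)$ together with $\|\nabla_{\gamma_{\rm std}}\gamma\|_{L^p}\le G$ shows the family $\{\gamma-\gamma_{\rm std}:\gamma\in\mathcal{M}^n_{p,G}\}$ is bounded in $C^{0,\alpha}$, $\alpha=1-n/p$; interpolating this bound against $\|\gamma-\gamma_{\rm std}\|_{L^\infty}\le\delta$ gives a modulus $\eta=\eta(n,p,G,\delta)$, with $\eta\to0$ as $\delta\to0$ for fixed $n,p,G$, such that every admissible $\gamma$ satisfies $\|\gamma-\gamma_{\rm std}\|_{C^{0,\alpha/2}}\le\eta$ and in particular $(1-\eta)^2\gamma_{\rm std}\le\gamma\le(1+\eta)^2\gamma_{\rm std}$. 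I emphasize that this controls $\gamma$ only in a Hölder norm: there is no bound on $R_\gamma$, so the hypothesis $\gamma\in\mathcal{M}_{\rm psc}$, i.e.\ $R_\gamma>0$ pointwise, is the only curvature information available.

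The core step is the comparison of $\Lambda_+$ for the $C^0$-close metrics $\gamma$ and $(1\pm\eta)^2\gamma_{\rm std}$, which I would carry out through Lemma \ref{c0monotonicity}: from an NNSC fill-in of one of these boundary data one manufactures an NNSC fill-in of the other whose total boundary mean curvature is at least $(1-C\eta)$ times the original, $C$ dimensional. Granting this, take an NNSC fill-in of $(\mathbf{S}^n,(1-\eta)^2\gamma_{\rm std},H)$ with $H>0$ realizing $\Lambda_+$ up to $\varepsilon/2$; the lemma converts it into an NNSC fill-in of $(\mathbf{S}^n,\gamma,\tilde H)$ with $\tilde H>0$ and $\int_{\mathbf{S}^n}\tilde H\,d\mu_\gamma\ge(1-C\eta)\bigl(\Lambda_+(\mathbf{S}^n,(1-\eta)^2\gamma_{\rm std})-\varepsilon/2\bigr)$, so $\Lambda_+(\mathbf{S}^n,\gamma)\ge(1-C\eta)(1-\eta)^{n-1}n\omega_n-\varepsilon/2$. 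Running the lemma the other way, from a fill-in of $(\mathbf{S}^n,\gamma,H')$ with $\int H'\,d\mu_\gamma$ large, produces a fill-in of $(\mathbf{S}^n,(1+\eta)^2\gamma_{\rm std},\tilde H')$ with $\int\tilde H'\ge(1-C\eta)\int H'\,d\mu_\gamma$, whence $\int H'\,d\mu_\gamma\le(1-C\eta)^{-1}(1+\eta)^{n-1}n\omega_n$; taking the supremum gives $\Lambda_+(\mathbf{S}^n,\gamma)\le(1-C\eta)^{-1}(1+\eta)^{n-1}n\omega_n$, and in particular $\Lambda_+(\mathbf{S}^n,\gamma)<\infty$. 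Combining the two bounds, $|\Lambda_+(\mathbf{S}^n,\gamma)-n\omega_n|\le\omega(\eta)$ for a modulus $\omega$ with $\omega(\eta)\to0$; choosing $\delta=\delta(n,p,G,\varepsilon)$ so that $\omega(\eta(n,p,G,\delta))\le\varepsilon$ completes the proof.

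The main obstacle is Lemma \ref{c0monotonicity}: constructing an NNSC region interpolating between two boundary metrics that are only $C^{0,\alpha}$-close, with a total-mean-curvature loss of order $\eta$. The standard techniques are unavailable. One cannot prescribe the scalar curvature along a foliation $dt^2+g_t$ running from $\gamma$ to a round metric, because the identity $R=R_{g_t}-2\partial_tH_t-H_t^2-|\mathrm{II}_t|^2$ involves $R_{g_t}$, which for an interpolant of $\gamma$ is uncontrolled; and shrinking $\gamma$ first does not help, since a rescaling by $\lambda^2$ multiplies the offending term by $\lambda^{-2}$. Nor can one isometrically embed $(\mathbf{S}^n,\gamma)$ into $\mathbf{R}^{n+1}$ and imitate the Shi--Tam exterior construction, since that requires $C^{2,\alpha}$-closeness and convexity. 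The transition must instead be built by a bending-type argument in the spirit of Gromov--Lawson, in which the term $-2\partial_tH_t$ is driven strongly positive so as to dominate the uncontrolled $R_{g_t}$, assisted by warped pieces over $\gamma$ itself of the form $dt^2+(1-bt)^{4/(n+1)}\gamma$, whose scalar curvature is $(1-bt)^{-4/(n+1)}R_\gamma>0$ and which provide NNSC collars over $\gamma$ in which the conformal scale of $\gamma$ varies; the delicate point is the bookkeeping that keeps the net mean-curvature cost $O(\eta)$ relative to $\int_{\mathbf{S}^n}H\,d\mu$. Everything else — the reduction, the scaling identity, the Sobolev--interpolation estimate, and the uniformity over $\mathcal{M}^n_{p,G}$ rather than along a fixed sequence — is routine once this lemma is in hand.
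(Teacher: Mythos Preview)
Your reduction strategy is right in outline: sandwich $\Lambda_+(\mathbf{S}^n,\gamma)$ between $\Lambda_+$ of nearby scaled round metrics via the monotonicity lemma, then use scaling and $\Lambda_+(\mathbf{S}^n,\gamma_{\rm std})=n\omega_n<\infty$. You also correctly identify the obstacle: Lemma~\ref{c0monotonicity} requires a smooth monotone path of \emph{NNSC} metrics, and a naive interpolant between $\gamma$ and $(1\pm\eta)^2\gamma_{\rm std}$ has uncontrolled scalar curvature because you only have $C^{0,\alpha}$ control on $\gamma$. But you do not close this gap. Your proposed ``bending-type argument in the spirit of Gromov--Lawson'' with warped pieces $dt^2+(1-bt)^{4/(n+1)}\gamma$ is not actually carried out, and there is no indication it can be made to work: the warped pieces keep the conformal class of $\gamma$ on each slice, so they never move you toward $\gamma_{\rm std}$, and any genuine deformation of the slice metric away from $\gamma$ reintroduces the uncontrolled $R_{\gamma_t}$ term you were trying to avoid. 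The proposal as written is incomplete at exactly the point it acknowledges as delicate.

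The paper's missing idea is to use the \emph{Ricci--DeTurck flow} as a PSC-preserving smoothing. Run $\gamma(t)$ with background the Ricci flow from $\gamma_{\rm std}$; positivity of scalar curvature is preserved along this flow. The $W^{1,p}$ bound enters essentially here (not merely through Sobolev embedding as you suggest): by the Shi--Tam parabolic estimate one gets $\|\partial_t\gamma(t)\|\le Ct^{-(1+\sigma)/2}$ with $\sigma=n/p<1$, rather than the generic $t^{-1}$. This improved exponent permits a reparametrization $s\mapsto s^N$ with $N>2/(1-\sigma)$ so that, after multiplying by a mild factor $1+\tilde\varepsilon s_0^{-1}s$, the path $s\mapsto(1+\tilde\varepsilon s_0^{-1}s)\gamma(s^N)$ is genuinely monotone increasing and PSC on $[0,s_0]$; Lemma~\ref{c0monotonicity} then applies. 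At the terminal time $s_0^N$, the Burkhardt-Guim estimate $\|\nabla^k(\gamma(t)-\bar\gamma(t))\|\le c_kt^{-k/2}\|\gamma-\gamma_{\rm std}\|_{L^\infty}$ makes $\gamma(s_0^N)$ $C^2$-close to the round $\bar\gamma(s_0^N)$, so the straight-line interpolation between them is a monotone PSC path and Lemma~\ref{c0monotonicity} applies again. Your Sobolev/interpolation step, while true, plays no role; the parameter $G$ matters because it controls the constant $C$ in the parabolic smoothing estimate, which in turn determines $s_0$.
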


One crucial step to prove Theorem \ref{continuity1-re0} is making use of the $C^0$-local connectedness of PSC metrics (Proposition \ref{localconnectednesspsc}) and quasi-spherical metrics. Note that uniform $W^{1,\,p}$-boundedness is satisfied by families of metrics possess certain compactness. As an application of Theorem \ref{continuity1-re0}, we have

 \begin{thm}\label{main0}
Let $2\leq n\leq 6$. For any $\varepsilon>0$, $K>0$, and $i_0>0$,  there exists a constant 
$\delta=C(n,K, i_0,\varepsilon)>1$
such that for any $\gamma\in\mathcal N_n\left(K, i_0\right)$ with $\dial(\gamma)\leq\delta$, there holds
\begin{equation*}
\left |\Lambda_+(\mathbf{S}^{n},\gamma)-\Lambda_+(\mathbf{S}^{n},\gamma_{\rm std})\right |\leq\varepsilon.
\end{equation*}	
Notations $\dial(\gamma)$ and $\mathcal N_n\left(K, i_0\right)$ are given on p.24 and p.35 below respectively. 	
\end{thm}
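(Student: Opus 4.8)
The plan is to argue by contradiction and reduce the statement to the continuity result Theorem~\ref{continuity1-re0}, using Cheeger--Gromov--Anderson compactness to convert the uniform geometric control built into $\mathcal N_n(K,i_0)$, together with the smallness of $\dial(\gamma)$, into $L^\infty$- and $W^{1,p}$-smallness of $\gamma$ relative to $\gamma_{\rm std}$ after a diffeomorphism. Suppose the conclusion fails. Then there are $\varepsilon_0>0$, $K>0$, $i_0>0$ and metrics $\gamma_i\in\mathcal N_n(K,i_0)$ with $\dial(\gamma_i)\to 1$ while $\big|\Lambda_+(\mathbf S^n,\gamma_i)-\Lambda_+(\mathbf S^n,\gamma_{\rm std})\big|>\varepsilon_0$ for all $i$. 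Membership in $\mathcal N_n(K,i_0)$ forces a uniform two-sided curvature bound, a uniform lower bound $i_0$ on the injectivity radius, and positive scalar curvature, so $(\mathbf S^n,\gamma_i)$ has uniformly bounded geometry.

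First I would invoke Cheeger--Gromov--Anderson compactness in harmonic coordinates: after passing to a subsequence there are diffeomorphisms $\phi_i\colon\mathbf S^n\to\mathbf S^n$ such that $\phi_i^*\gamma_i$ converges in the $C^{1,\alpha}$ topology (any fixed $\alpha\in(0,1)$) to a $C^{1,\alpha}$ metric $g_\infty$ on $\mathbf S^n$; the limit manifold is diffeomorphic to $\mathbf S^n$, which is why the $\phi_i$ can be taken as self-maps. To identify $g_\infty$: since $\dial(\gamma_i)\to 1$ the spaces $(\mathbf S^n,\gamma_i)$ converge to $(\mathbf S^n,\gamma_{\rm std})$ in Gromov--Hausdorff distance, and since $C^{1,\alpha}$-convergence of metric tensors on a fixed compact manifold forces uniform convergence of distance functions, $(\mathbf S^n,\gamma_i)$ also converges to $(\mathbf S^n,g_\infty)$; by uniqueness of Gromov--Hausdorff limits there is a distance-preserving bijection $\Psi$ from $(\mathbf S^n,\gamma_{\rm std})$ onto $(\mathbf S^n,g_\infty)$, which by the regularity theory for isometries between low-regularity Riemannian manifolds is a diffeomorphism with $\Psi^*g_\infty=\gamma_{\rm std}$. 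Replacing $\phi_i$ by $\phi_i\circ\Psi$, we may therefore assume $\phi_i^*\gamma_i\to\gamma_{\rm std}$ in $C^{1,\alpha}$.

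Now fix $p\in(n,\infty)$. From $\phi_i^*\gamma_i\to\gamma_{\rm std}$ in $C^{1,\alpha}(\mathbf S^n,\gamma_{\rm std})$ we get $\|\nabla_{\gamma_{\rm std}}(\phi_i^*\gamma_i)\|_{L^p(\mathbf S^n,\gamma_{\rm std})}\to 0$ and $\|\phi_i^*\gamma_i-\gamma_{\rm std}\|_{L^\infty(\mathbf S^n,\gamma_{\rm std})}\to 0$, while $R_{\phi_i^*\gamma_i}=R_{\gamma_i}\circ\phi_i>0$. Hence $\phi_i^*\gamma_i\in\mathcal M^n_{p,1}$ for all large $i$, and Theorem~\ref{continuity1-re0} (with $\varepsilon_0/2$ in place of $\varepsilon$ and $G=1$) gives $\big|\Lambda_+(\mathbf S^n,\phi_i^*\gamma_i)-\Lambda_+(\mathbf S^n,\gamma_{\rm std})\big|\le\varepsilon_0/2$ for all large $i$. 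On the other hand $\Lambda_+$ is invariant under diffeomorphisms of the boundary data: precomposing any NNSC fill-in of $(\mathbf S^n,\gamma_i,H)$ with $\phi_i$ produces an NNSC fill-in of $(\mathbf S^n,\phi_i^*\gamma_i,H\circ\phi_i)$, with $H\circ\phi_i>0$ precisely when $H>0$ and $\int_{\mathbf S^n}H\circ\phi_i\,d\mu_{\phi_i^*\gamma_i}=\int_{\mathbf S^n}H\,d\mu_{\gamma_i}$, so $\Lambda_+(\mathbf S^n,\phi_i^*\gamma_i)=\Lambda_+(\mathbf S^n,\gamma_i)$. These two facts contradict $\big|\Lambda_+(\mathbf S^n,\gamma_i)-\Lambda_+(\mathbf S^n,\gamma_{\rm std})\big|>\varepsilon_0$, proving the theorem.

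The only genuinely delicate point is the compactness-and-identification step of the second paragraph: one must apply the correct version of Cheeger--Gromov/Anderson convergence adapted to $\mathcal N_n(K,i_0)$, realize the approximating maps as self-diffeomorphisms of $\mathbf S^n$, and upgrade ``the $C^{1,\alpha}$-limit $g_\infty$ induces the round distance'' to ``$g_\infty=\gamma_{\rm std}$'', where the a priori low regularity of $g_\infty$ must be handled with care. Everything afterwards --- passing to the $L^\infty$/$W^{1,p}$ hypotheses of Theorem~\ref{continuity1-re0}, the sign of the scalar curvature, and the diffeomorphism-invariance of $\Lambda_+$ --- is routine.
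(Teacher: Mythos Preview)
Your overall strategy---contradiction, compactness, identification of the limit with the round metric, then Theorem~\ref{continuity1-re0}---is exactly what the paper does. But there is a genuine gap in your execution, and it sits precisely where you yourself flag the delicate point.

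You assert that membership in $\mathcal N_n(K,i_0)$ ``forces a uniform two-sided curvature bound''. It does not: by definition
\[
\mathcal N_n(K,i_0)=\bigl\{\gamma\in\mathcal M_{\rm psc}(\mathbf S^n)\ \big|\ \ric_\gamma\geq -K,\ \inj_\gamma\geq i_0\bigr\},
\]
so one has only a \emph{lower} Ricci bound (together with PSC and an injectivity-radius lower bound). The additional hypothesis $\dial(\gamma_i)\to 1$ gives only $C^0$ control up to diffeomorphism, not curvature control. Consequently you cannot invoke Anderson's $C^{1,\alpha}$ compactness, which requires a two-sided Ricci bound; the correct tool under these hypotheses is the Anderson--Cheeger result \cite{AC}, which yields diffeomorphisms $\phi_i$ such that $\phi_i^*\gamma_i$ converges in $W^{1,p}$ (any fixed $p>n$) to a limit metric $\gamma_\infty$ that is only $W^{1,p}$.

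This weaker regularity then forces more care in the identification step. The paper shows $\dial(\gamma_\infty)=1$ directly from the definition of $\dial$ and the $W^{1,p}$ convergence, and then proves a separate lemma (Lemma~\ref{keylem}, relying on Taylor's regularity result for isometries) producing a $W^{2,p}$ homeomorphism $\psi$ with $\gamma_\infty=\psi^*\gamma_{\rm std}$ and controlled $\|D\psi\|_{W^{1,p}}$. Since $\psi$ is only $W^{2,p}$, one cannot simply replace $\phi_i$ by $\phi_i\circ\psi$; the paper mollifies $\psi$ to a nearby smooth diffeomorphism before pulling back. Your sentence ``by the regularity theory for isometries between low-regularity Riemannian manifolds [$\Psi$] is a diffeomorphism'' skips exactly this issue. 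Once these two corrections are made, the remainder of your argument (diffeomorphism-invariance of $\Lambda_+$, application of Theorem~\ref{continuity1-re0}) goes through and matches the paper verbatim.
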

\begin{rem}
The assumption $2\leq n\leq 6$ in Theorem \ref{finiteinfmeancurpsc1}, Theorem \ref{continuity1-re0} and Theorem \ref{main0} is only due to the dimension restriction of the classical AF positive mass theorem by Schoen-Yau \cite{SY79a,SY79b,SY81} (see also \cite{Witten} for the spin case). Recently, Schoen-Yau \cite{SY17} claimed that the dimension restriction can be dropped; thus, these results hold in all dimensions not less than $2$.
\end{rem}

This paper is organized as follows. In Section \ref{section2}, we prove the extensibility of an arbitrary boundary metric to a PSC metric inside, and show pointwisely large mean curvature is an obstruction for such extensions. In Section \ref{section3}, we study the the relationship between $\Lambda_{+,\,\kappa}$ and PMTs for AF and AH manifolds. In Section \ref{section4}, we prove the finiteness of the total mean curvature for NNSC spin fill-ins of spheres under the positive mean curvature condition. In Section \ref{section5}, we give some estimates for $\Lambda_+(\mathbf{S}^2, \gamma)$ and prove Conjecture \ref{conj1} for nonnegatively curved surfaces with the positive mean curvature. In Section \ref{section6}, we show $C^0$-local path-connectedness for PSC metrics on a fixed closed manifold; based on this, we give some stability estimates for $\Lambda_+(\mathbf{S}^n, \gamma)$ when $n\geq3$. 
 
 \medskip
 {\it Acknowledgements.} We would like to express our sincere gratitude to Misha Gromov for his interest, comments and suggestions on this work. We would like to thank Luen-Fai Tam  and Pengzi Miao for their interest in this work, especially, for Luen-Fai Tam for informing us Theorem 1.4 in \cite{KW} and suggesting us to clarify the constant in Theorem \ref{finiteinfmeancurpsc1}. We are also very grateful to Roman Prosanov for the discussion about Pogorelov's rigidity theorem \cite{P}.

%--------------------------------------------------------------------------------------------------------------------------------------------------------------------------------------------------------------------------------------------------
%--------------------------------------------------------------------------------------------------------------------------------------------------------------------------------------------------------------------------------------------------

\section{Extensibility of boundary metrics to PSC metrics inside and obstructions on the mean curvature}\label{section2}

In this section, we prove Theorem \ref{pscextension1} and Theorem \ref{finiteinfmeancurpsc1}. To this end, we first exploit the PSC-cobordism of Bartnik data. Recall that a triple of Bartnik data $(\Sigma,\gamma,H)$ consists of an oriented closed null-cobordant Riemannian manifold $(\Sigma,\gamma)$ and a given smooth function $H$ on $\Sigma$.
\begin{defn}[\cite{SH}]
Given Bartnik data $(\Sigma^{n}_i,\gamma_i,H_i)$ $(i=0,1)$, we say $(\Sigma^{n}_0,\gamma_0,H_0)$ is PSC-cobordant to $(\Sigma^{n}_1,\gamma_1,H_1)$, if there is an orientable $(n+1)$-dimensional manifold $(\Omega^{n+1},g)$ such that: \\
$(1)$ $\partial\Omega^{n+1}=\Sigma^{n}_0\sqcup\Sigma^{n}_1;$ \\
$(2)$ the induced metric on $\Sigma^{n}_i$ is $\gamma_i$, and the mean curvature of $\Sigma^n_i$ with respect to the outward normal is $H_i$ for $i=0,1;$ \\
$(3)$ $R_g>0$.
\end{defn}
\begin{lm}\label{PSCcobordismlm}
Let $\Sigma$ be a closed manifold, and $\gamma_0$, $\gamma_1$ be two smooth metrics on $\Sigma$ with $\gamma_1>\gamma_0$. Then for any smooth function $h$ on $\Sigma$, there are smooth functions $h_0$ and $h_1$ on $\Sigma$, such that $(\Sigma,\gamma_0,h_0)$ is PSC-cobordant to $(\Sigma,\gamma_1,h_1)$ and $h_1>h$. Moreover, the corresponding region is diffeomorphic to $\Sigma\times[0,1]$.   
\end{lm}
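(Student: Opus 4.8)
The plan is to construct the cobordism explicitly on the product $\Sigma\times[0,1]$ by writing down a metric of warped-product-plus-correction type and tracking the scalar curvature. Since $\gamma_1>\gamma_0$ as quadratic forms, I would first reduce to a normalized situation: by compactness of $\Sigma$ there is $\varepsilon_0>0$ with $\gamma_1>(1+\varepsilon_0)\gamma_0$, and more importantly we may interpolate through a smooth path $\gamma_t$ of metrics on $\Sigma$, $t\in[0,1]$, with $\gamma_t$ strictly increasing in $t$ (e.g. a suitable reparametrization so that $\partial_t\gamma_t>0$); the naive choice $\gamma_t=(1-t)\gamma_0+t\gamma_1$ already has $\partial_t\gamma_t=\gamma_1-\gamma_0>0$. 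On $\Omega=\Sigma\times[0,1]$ consider the metric $g=\phi(x,t)^2\,dt^2+\gamma_t(x)$ for a positive function $\phi$ to be chosen. The induced metric on the slice $\Sigma\times\{i\}$ is $\gamma_i$ as required, and the mean curvature of the slice is, up to sign conventions, $h_i=\phi^{-1}\cdot\tfrac12\tr_{\gamma_i}(\partial_t\gamma_t)|_{t=i}$, which is a fixed positive function of $x$ once $\phi$ is fixed on the boundary. This is exactly the freedom we exploit: by shrinking $\phi$ near $t=1$ we can make $h_1$ as large as we like pointwise, in particular $h_1>h$; we then set $h_0$ to be whatever the formula gives at $t=0$.

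The heart of the matter is arranging $R_g>0$. The second fundamental form of the slices is $A_t=\tfrac1{2\phi}\partial_t\gamma_t$, which is positive definite by the monotonicity of $\gamma_t$; hence by the Gauss equation the intrinsic and extrinsic curvatures of the slices interact favorably, and $|A_t|^2-(\tr A_t)^2$ is the only potentially bad term. The standard computation (cf. the quasi-spherical / prescribed-mean-curvature literature, e.g. Bartnik, Shi--Tam) expresses $R_g$ in terms of $R_{\gamma_t}$, the first and second $t$-derivatives of $\gamma_t$, and $\phi$ together with its spatial Laplacian $\Delta_{\gamma_t}\phi$; schematically
\begin{equation*}
R_g=R_{\gamma_t}-\frac{2}{\phi}\Delta_{\gamma_t}\phi-|A_t|^2-(\tr_{\gamma_t}A_t)^2-\frac{2}{\phi}\,\partial_t(\tr_{\gamma_t}A_t)+(\text{lower order in }\phi).
\end{equation*}
I would not try to solve this as a PDE for $\phi$; instead I would observe that if $\phi\equiv\phi(t)$ is taken to depend on $t$ only, the $\Delta_{\gamma_t}\phi$ term drops, and the dominant term as $\phi\to0$ on the right-hand side is $-\tfrac{2}{\phi}\partial_t(\tr_{\gamma_t}A_t)$; choosing $\phi$ small and with the right sign of $\phi'$ — concretely, making $\tr_{\gamma_t}A_t$ a prescribed increasing function of $t$ by absorbing the $t$-dependence of $\gamma_t$ and $\phi$ — one can force this term to be large and positive, dominating the bounded contributions $R_{\gamma_t}$, $|A_t|^2$, $(\tr A_t)^2$, which are controlled uniformly on the compact set $\Sigma\times[0,1]$. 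Equivalently, one reparametrizes the interpolating path: write the collar metric as $f(r)\,dr^2+\gamma(x,r)$ and choose the height function $f$ and the speed of the path so that the mean curvature of the level sets is monotone increasing and large; this is precisely the mechanism behind Theorem 4 in \cite{SWWZ} that the authors cite as inspiration.

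The main obstacle I anticipate is bookkeeping in the scalar curvature formula — getting the signs and the hierarchy of terms right so that a single scalar function $\phi$ of $t$ alone (plus the freedom to reparametrize $t$) genuinely dominates all the curvature terms of the fixed family $\{\gamma_t\}$ uniformly, rather than only after a further conformal change. A clean way to organize this is: (i) fix the smooth increasing path $\gamma_t$; (ii) compute $R_g$ for $g=\phi(t)^2dt^2+\gamma_t$; (iii) substitute $\phi(t)=\lambda\,\psi(t)$ with $\lambda$ a small constant and $\psi$ chosen so that $\partial_t\log(\tr_{\gamma_t}A_t)$ has a fixed sign, and let $\lambda\to0$; (iv) read off $h_0,h_1$ from the boundary values and, if necessary, compose with an extra small conformal/product adjustment near $t=1$ to guarantee $h_1>h$ pointwise while keeping $R_g>0$. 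The diffeomorphism type of the cobordism is $\Sigma\times[0,1]$ by construction, giving the last assertion for free.
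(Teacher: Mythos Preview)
Your overall strategy---take the linear path $\gamma_t=(1-t)\gamma_0+t\gamma_1$, write $g=\phi^2\,dt^2+\gamma_t$ on $\Sigma\times[0,1]$, and make the lapse $\phi$ small so that the boundary mean curvature $H_1=\phi^{-1}\bar H_1$ is large---is sound, and the idea of taking $\phi=\phi(t)$ to kill $\Delta_{\gamma_t}\phi$ is a genuine simplification over the paper. But you have the key sign backwards. Using the paper's formula (or your own relation $R_g=R_{\gamma_t}-(H_t^2+|A_t|^2)-2\phi^{-1}\partial_tH_t$ with $\phi=\phi(t)$), positivity of $R_g$ requires $\partial_t H_t<0$, i.e.\ the slice mean curvature must be \emph{decreasing}; equivalently $\phi'>0$. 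Your prescription ``make $\tr_{\gamma_t}A_t$ an increasing function of $t$'' and ``mean curvature of the level sets monotone increasing'' gives the wrong sign for the dominant $-2\phi^{-1}\partial_tH_t$ term and would drive $R_g$ to $-\infty$ as $\phi\to 0$. The fix is immediate: take $\phi(t)=\varepsilon e^{ct}$ with $c>0$ chosen so large that $2c\bar H_t>R_{\gamma_t}-R_{\bar g}$ uniformly (possible since $\bar H_t\geq\underline H>0$ on the compact cylinder), and then $\varepsilon$ small; one checks $R_g=\phi^{-2}(R_{\bar g}-R_{\gamma_t}+2c\bar H_t)+R_{\gamma_t}>0$ and $H_1=\varepsilon^{-1}e^{-c}\bar H_1>h$. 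You anticipated exactly this bookkeeping hazard; this is where it bites.

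For comparison, the paper does \emph{not} take $\phi$ independent of $x$. It instead solves the quasi-spherical parabolic PDE
\[
\bar H_t\,\partial_t u=u^2\Delta_{\gamma_t}u+\tfrac12(\delta-R_{\gamma_t})u^3+\tfrac12(R_{\gamma_t}-R_{\bar g})u,\qquad u(\cdot,0)\equiv\varepsilon,
\]
which forces $R_g\equiv\delta>0$ exactly; global existence and the bound $u\leq e^M\varepsilon$ follow from ODE barriers (their Claim~2.2). The PDE route buys a constant prescribed scalar curvature, which is not needed for this lemma but is the engine used repeatedly elsewhere in the paper (Lemma~3.1, Theorem~4.1). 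Your $\phi(t)$-only argument, once the sign is corrected, is shorter and entirely elementary for the purpose of this lemma.
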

We will construct the required PSC-cobordism via a quasi-spherical type equation. To begin with, we state some important properties for quasi-spherical type equations in the following two claims. 

\begin{cla}\label{quassphericalscalar}
Let $\Sigma$ be a closed manifold and $\{\gamma_t\}_{t\in [0,1]}$ be a smooth path of metrics on $\Sigma$. Set $\Omega=\Sigma\times[0,1]$. On $\Omega$, define the metric 
$\bar g=dt^2+\gamma_t$. For a smooth positive function $u$ on $\Omega$, let $g=u^2dt^2+\gamma_t$. Then the scalar curvature of $g$ is 
\begin{equation*}
R_g=u^{-2}R_{\bar g}+\left(1-u^{-2}\right)R_{\gamma_t}+2u^{-3}u_t\bar H_t-2u^{-1}\Delta_{\gamma_t}u,
\end{equation*}
where $\bar H_t$ is the mean curvature of the slice $\Sigma\times\{t\}$ with respect to the $\partial_t$-direction in $(\Omega,\bar g)$.
In particular, if $u$ satisfies the following quasi-spherical equation
\begin{equation}\label{quassphericalscalareq0}
\bar H_t\frac{\partial u}{\partial t}=u^2\Delta_{\gamma_t}u+\frac{1}{2}\left(f-R_{\gamma_t}\right)u^3+\frac{1}{2}\left(R_{\gamma_t}-R_{\bar g}\right)u
\end{equation}
for some smooth function $f$ on $\Omega$, then $R_g\equiv f$. 
\end{cla}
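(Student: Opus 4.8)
The plan is to derive the scalar-curvature formula by a slice-by-slice comparison of the two metrics $\bar g=dt^2+\gamma_t$ and $g=u^2\,dt^2+\gamma_t$ on $\Omega=\Sigma\times[0,1]$, taking advantage of the fact that they induce the \emph{same} metric $\gamma_t$ on every slice $\Sigma\times\{t\}$ and that $\partial_t$ is orthogonal to the slices in both metrics. First I would fix notation: let $\bar A_t=\tfrac12\partial_t\gamma_t$ and $\bar H_t=\tr_{\gamma_t}\bar A_t$ denote the second fundamental form and mean curvature of $\Sigma\times\{t\}$ in $(\Omega,\bar g)$. Since the $g$-unit normal to the slice is $u^{-1}\partial_t$, the corresponding quantities computed for $g$ are $A_t=u^{-1}\bar A_t$, $H_t=u^{-1}\bar H_t$ and $|A_t|^2=u^{-2}|\bar A_t|^2$, all contractions being taken in the common slice metric $\gamma_t$.

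The computational heart of the argument is the standard foliation identity: for any metric of the form $N^2\,dt^2+\gamma_t$ with unit normal $\nu=N^{-1}\partial_t$, combining the traced Gauss equation with the Riccati (lapse) equation governing the $t$-evolution of the mean curvature yields
\begin{equation*}
R=R_{\gamma_t}-|A|^2-H^2-\frac{2}{N}\frac{\partial H}{\partial t}-\frac{2}{N}\Delta_{\gamma_t}N .
\end{equation*}
The plan is to apply this twice. Applying it with $N=1$ to $\bar g$ (where the Laplacian term vanishes) gives an expression for $2\,\partial_t\bar H_t$ in terms of $R_{\bar g}$, $R_{\gamma_t}$, $|\bar A_t|^2$ and $\bar H_t^2$. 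Applying it with $N=u$ to $g$, and substituting $A_t=u^{-1}\bar A_t$ together with $\partial_t H_t=-u^{-2}u_t\bar H_t+u^{-1}\partial_t\bar H_t$, I would then eliminate $\partial_t\bar H_t$ via the first identity; the $|\bar A_t|^2$ and $\bar H_t^2$ contributions cancel, and a purely algebraic rearrangement produces exactly the claimed formula for $R_g$.

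For the final ``in particular'' assertion, the plan is simply to multiply the quasi-spherical equation \eqref{quassphericalscalareq0} by $2u^{-3}$ so as to isolate the term $2u^{-3}u_t\bar H_t$, insert this into the formula for $R_g$ just obtained, and check that all remaining terms cancel pairwise, leaving $R_g\equiv f$.

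The one place where care is really needed is bookkeeping: pinning down the sign and normalization conventions in the Gauss and Riccati identities — the second-fundamental-form convention that makes $\partial_t\gamma_{ij}=2\bar A_{ij}$ for $\bar g$, and the sign of $\Delta_{\gamma_t}$ — so that the two invocations of the general identity combine without a stray sign. I would check the conventions against two sanity tests: the product metric $dt^2+\gamma$ (which must give $R=R_\gamma$) and the flat Euclidean metric foliated by concentric spheres (which must give $R=0$). Once the conventions are fixed there is no further analytic content: the statement is a pointwise identity, and the conclusion $R_g\equiv f$ follows by inspection.
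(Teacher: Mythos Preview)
Your proposal is correct and follows essentially the same route as the paper. The paper likewise records $A_t=u^{-1}\bar A_t$, $H_t=u^{-1}\bar H_t$, derives the foliation identity $R_g=R_{\gamma_t}-(H_t^2+|A_t|^2)-2u^{-1}\Delta_{\gamma_t}u-2u^{-1}\partial_t H_t$ by combining the Gauss equation with the second-variation (Riccati) formula, specializes it to $u\equiv 1$ to extract $R_{\bar g}$, and then eliminates $\partial_t\bar H_t$ exactly as you describe; the only cosmetic difference is that you quote the foliation identity as a known fact while the paper derives it in two lines.
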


\begin{cla}\label{quassphericalestimate}
Let $\Sigma$ be a closed manifold and $\{\gamma_t\}_{t\in [0,1]}$ be a smooth path of metrics on $\Sigma$. Set $\Omega=\Sigma\times[0,1]$. On $\Omega$, define the metric 
$\bar g=dt^2+\gamma_t$. Let $\bar H_t$ denote the mean curvature of the slice $\Sigma\times\{t\}$ with respect to the $\partial_t$-direction in $(\Omega,\bar g)$. Assume $\bar H_t>0$. Then for any smooth function $f$ on $\Omega$, there is a constant $\varepsilon_0>0$ depending only on $M$, where 
$M=\max_{\Omega}\bar H^{-1}_t\left(|R_{\gamma_t}|+|R_{\bar g}|+|f|\right)$, such that for any $\varepsilon\in(0,\varepsilon_0]$, the following quasi-spherical metric equation 
\begin{equation}\label{PSCExtensionQSeq}
\left\{
\begin{aligned}
\bar H_t\frac{\partial u}{\partial t}&=u^2\Delta_{\gamma_t}u+\frac{1}{2}\left(f-R_{\gamma_t}\right)u^3+\frac{1}{2}\left(R_{\gamma_t}-R_{\bar g}\right)u,\\
\quad u(\cdot,0)&\equiv\varepsilon
\end{aligned}
\right.
\end{equation}
defined on $\Omega$ has a positive solution and the solution satisfies
\begin{equation}\label{quassphericaltwobounds}
\frac{1}{2}e^{-M}\varepsilon\leq u\leq e^{M}\varepsilon.
\end{equation}
 
\end{cla}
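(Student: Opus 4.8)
The plan is to treat \eqref{PSCExtensionQSeq} as a quasilinear parabolic initial value problem in the "time" variable $t$, with the slices $\Sigma\times\{t\}$ playing the role of space, and to run a continuity/a-priori-estimate argument. First I would rewrite the PDE in the standard parabolic form by dividing by $\bar H_t>0$, so that $\partial_t u = \bar H_t^{-1}u^2\Delta_{\gamma_t}u + F(t,x,u)$ where $F$ collects the zeroth-order terms; since $\bar H_t$ is smooth, bounded and positive on the compact cylinder $\Omega$, the coefficient $\bar H_t^{-1}u^2$ is uniformly elliptic as long as $u$ stays bounded away from $0$ and $\infty$. Because $\Sigma$ is closed there are no spatial boundary conditions, and the constant initial datum $u(\cdot,0)\equiv\varepsilon$ is smooth, so short-time existence and uniqueness of a smooth positive solution follow from standard quasilinear parabolic theory (e.g. via linearization and Schauder estimates, or by Banach fixed point in a parabolic Hölder space). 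The whole game is then to show the solution does not escape the interval $[\tfrac12 e^{-M}\varepsilon, e^M\varepsilon]$, which simultaneously keeps the equation uniformly parabolic and hence lets us continue the solution all the way to $t=1$.

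The key step is the two-sided barrier estimate \eqref{quassphericaltwobounds}, proved by the maximum principle with explicit ODE barriers. For the upper bound I would set $\bar u(t)=e^{Mt}\varepsilon$ (ignoring the $x$-dependence) and check that it is a supersolution: at a first touching point the $\Delta_{\gamma_t}\bar u$ term vanishes, the cubic term $\tfrac12(f-R_{\gamma_t})u^3$ and the linear term $\tfrac12(R_{\gamma_t}-R_{\bar g})u$ are controlled in absolute value by $M\bar H_t\, u$ once $u\le e^M\varepsilon\le$ (a suitable $O(1)$ bound) — here is exactly where the definition of $M$ as $\max_\Omega \bar H_t^{-1}(|R_{\gamma_t}|+|R_{\bar g}|+|f|)$ is used, together with the smallness $\varepsilon\le\varepsilon_0$ to absorb the $u^3$ versus $u$ discrepancy (for $\varepsilon_0$ small, $u^3\le\varepsilon_0^2 u$, so the cubic term is dominated by the linear barrier rate). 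Then $\bar H_t\,\bar u' = M\bar H_t\bar u$ dominates the right-hand side, so $u\le\bar u\le e^M\varepsilon$ by the parabolic comparison principle. The lower bound is symmetric with the subsolution $\underline u(t)=\tfrac12 e^{-Mt}\varepsilon$; the extra factor $\tfrac12$ gives room so that $\underline u(0)=\tfrac12\varepsilon< \varepsilon = u(\cdot,0)$, and one checks $\bar H_t\underline u' = -M\bar H_t\underline u$ is a subsolution by the same term-by-term estimate. In particular $u$ stays positive and bounded, so the maximal existence time is $t=1$.

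Finally, combining the two: the open set of times for which a smooth positive solution exists on $\Sigma\times[0,t)$ satisfying the bounds \eqref{quassphericaltwobounds} is nonempty (short-time existence) and closed (the a-priori bounds keep the equation uniformly parabolic with coefficients bounded in parabolic Hölder norms, so Schauder estimates give uniform higher-order control and the solution extends past $t$), hence equals $[0,1]$. I expect the main technical obstacle to be the precise bookkeeping in the barrier argument — making sure the constant $\varepsilon_0$ can indeed be chosen depending only on $M$, i.e. that the threshold at which the cubic term becomes negligible compared to the linear term is controlled purely by $M$ and not by other data — but this is a routine, if slightly fiddly, elementary estimate. A minor subtlety worth flagging is that the "time" direction here is geometric rather than an evolution by a sign-definite flow, so one should double-check that $\bar H_t>0$ makes the sign of the $\partial_t u$ coefficient correct for forward parabolicity; the hypothesis $\bar H_t>0$ is exactly what guarantees this.
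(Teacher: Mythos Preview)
Your proposal is correct and follows the same strategy as the paper: local parabolic existence, ODE barriers compared via the maximum principle, then continuation using the resulting a-priori bounds. The paper's only cosmetic difference is its choice of barriers --- it uses the exact solutions of $w'=\tfrac{M}{2}(w^3+w)$ and $v'=-\tfrac{M}{2}(v^3+v)$ with initial value $\varepsilon$ (taking $\varepsilon_0=e^{-M/2}$) and applies the maximum principle to the linearized equation for $\Phi=w-u$, whereas your simpler exponentials $e^{\pm Mt}\varepsilon$ work equally well once $e^{M}\varepsilon\le 1$.
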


\begin{proof}[Proof of Claim \ref{quassphericalscalar}]
For convenience, denote $\Sigma\times\{t\}$ by $\Sigma_t$. 
Let $\bar A_t$ be the second fundamental form of $\Sigma_t$ induced from $\bar g$ with respect to the $\partial_t$-direction. Let $A_t$ and $H_t$ be the second fundamental form and the mean curvature of $\Sigma_t$ induced from $g$ with respect to the $\partial_t$-direction. It is not hard to see 
\begin{equation}\label{sec2meancurrelation}
A_t=u^{-1}\bar A_t,\quad H_t=u^{-1}\bar H_t.
\end{equation}
By the Gauss equation, we have
\begin{equation}\label{Gausseq}
R_g=2\ric_g(\nu,\nu)+R_{\gamma_t}-\left(H_t^2-|A_t|^2\right),
\end{equation}
where $\nu=u^{-1}\partial_t$.
By the ``pointwise" second variation formula, 
\begin{equation}\label{Jacobieq}
\partial_t H_t=-\Delta_{\gamma_t}u-\left(|A_t|^2+\ric_g(\nu,\nu)\right)u.
\end{equation}
Combining \eqref{Gausseq} and \eqref{Jacobieq}, we get
\begin{equation}\label{relation1}
R_g=R_{\gamma_t}-\left(H_t^2+|A_t|^2\right)-2u^{-1}\Delta_{\gamma_t}u-2u^{-1}\partial_t H_t.
\end{equation}
Letting $u\equiv 1$ in \eqref{relation1}, for $\bar g$ we have 
\begin{equation}\label{relation2}
R_{\bar g}=R_{\gamma_t}-\left(\bar H_t^2+|\bar A_t|^2\right)-2\partial_t \bar H_t.
\end{equation}
Substituting \eqref{relation2} into \eqref{relation1} and using the relation \eqref{sec2meancurrelation}, we get
\begin{equation*}
R_g=u^{-2}R_{\bar g}+\left(1-u^{-2}\right)R_{\gamma_t}-2u^{-1}\Delta_{\gamma_t}u+2\bar H_tu^{-3}u_t.
\end{equation*}
Finally, due to \eqref{quassphericalscalareq0}, we obtain $R_g\equiv f$.

For more detailed calculations for the expression of the scalar curvature by moving frames method, please see p.82--84 in \cite{ST02}. 
\end{proof}

\begin{proof}[Proof of Claim \ref{quassphericalestimate}]
By the theory of parabolic equations, equation \eqref{PSCExtensionQSeq} has a local solution (see \cite{Bartnik1}). Assume $[0,T)$ is the largest interval where the solution exists. We first prove that $u$ has the estimate \eqref{quassphericaltwobounds} in $[0,T)$ for sufficiently small $\varepsilon$. 

Consider the following corresponding ODEs: 
\begin{equation}\label{supersolution}
\left\{
\begin{aligned}
\frac{\partial v}{\partial t}&=-\frac{M}{2}(v^3+v),\\
\quad v(0)&\equiv\varepsilon,
\end{aligned}
\right.
\end{equation}
and
 \begin{equation}\label{subsolution}
\left\{
\begin{aligned}
\frac{\partial w}{\partial t}&=\frac{M}{2}(w^3+w),\\
\quad w(0)&\equiv\varepsilon.
\end{aligned}
\right.
\end{equation}
Equation \eqref{supersolution} has a positive solution on the whole $[0,1]$, namely
\begin{equation*}
v(t)=\frac{\varepsilon}{\sqrt{(1+\varepsilon^2)e^{Mt}-\varepsilon^2}}.
\end{equation*}
Equation \eqref{subsolution} has a positive solution on $\big[0,\frac{1}{M}\log (1+\varepsilon^{-2})\big)$, namely
\begin{equation*}
w(t)=\frac{\varepsilon}{\sqrt{(1+\varepsilon^2)e^{-Mt}-\varepsilon^2}}.
\end{equation*}
Set $\varepsilon_0=e^{-\frac{M}{2}}$ and choose an $\varepsilon\leq\varepsilon_0$. Then $\log (1+\varepsilon^{-2})\geq 1$, and in $[0,1]$ there holds
$$(1+\varepsilon^2)e^{-Mt}-\varepsilon^2\geq e^{-2M}.$$ Thus $w$ exists on the whole $[0,1]$, and satisfies
\begin{equation*}
w(t)\leq e^M\varepsilon.
\end{equation*}
Next, we prove $v\leq u\leq w$ in $[0,T)$. Take $\Phi=w-u$. Then $\Phi(x,0)\equiv 0$ and $\Phi$ satisfies
\begin{align*}
\frac{\partial\Phi}{\partial t}=&\frac{u^2}{\bar H_t}\Delta_{\gamma_t}\Phi+\frac{1}{2\bar H_t}\left[\left(f-R_{\gamma_t}\right)\left(u^2+uw+w^2\right)+\left(R_{\gamma_t}-R_{\bar g}\right)\right]\Phi\\
&+\frac{1}{2}\left(M-\frac{f-R_{\gamma_t}}{\bar H_t}\right)w^3+\frac{1}{2}\left(M-\frac{R_{\gamma_t}-R_{\bar g}}{\bar H_t}\right)w\\
\geq &\frac{u^2}{\bar H_t}\Delta_{\gamma_t}\Phi+\frac{1}{2\bar H_t}\left[\left(f-R_{\gamma_t}\right)\left(u^2+uw+w^2\right)+\left(R_{\gamma_t}-R_{\bar g}\right)\right]\Phi.
\end{align*}
By the parabolic maximum principle, $\Phi\geq 0$ in $[0,T)$, namely $u\leq w$ in $[0,T)$. Similarly, we can prove $u\geq v$ in $[0,T)$.  
Once we have the estimate \eqref{quassphericaltwobounds} in $[0,T)$, by the regularity theory of parabolic equations, $u$ can be smoothly extended to $T$. Then $u$ can also be extended across $T$ a bit if $T<1$ by local existence. This completes the proof. 
\end{proof}

\begin{proof}[Proof of Lemma \ref{PSCcobordismlm}]
Set $\gamma_t=(1-t)\gamma_0+t\gamma_1$. Then $\{\gamma_t\}_{t\in [0,1]}$ is a smooth path of metrics on $\Sigma$. Set $\Omega=\Sigma\times[0,1]$. On $\Omega$, define $\bar g=dt^2+\gamma_t$. Denote the slice $\Sigma\times\{t\}$ by $\Sigma_t$. Let $\bar A_t$ and $\bar H_t$ be the second fundamental form and the mean curvature of $\Sigma_t$ induced from $\bar g$ with respect to the $\partial_t$-direction. It is not hard to see
$$\bar A_t=\frac{1}{2}\gamma'_t=\frac{1}{2}\left(\gamma_1-\gamma_0\right).$$
Since $\gamma_1>\gamma_0$, $\bar H_t>0$. 
For constants $\delta>0$ and $\varepsilon>0$, consider the following quasi-spherical metric equation on $\Sigma\times[0,1]$: 
\begin{equation*}
\left\{
\begin{aligned}
\bar H_t\frac{\partial u}{\partial t}&=u^2\Delta_{\gamma_t}u+\frac{1}{2}\left(\delta-R_{\gamma_t}\right)u^3+\frac{1}{2}\left(R_{\gamma_t}-R_{\bar g}\right)u,\\
\quad u(\cdot,0)&\equiv\varepsilon.
\end{aligned}
\right.
\end{equation*}
By Claim \ref{quassphericalestimate}, for sufficiently small $\varepsilon$, above equation has a positive solution on $\Omega$ and the solution satisfies $u\leq e^M\varepsilon$,
where $M$ is a constant independent of $\varepsilon$. 

Set 
$
g=u^2dt^2+\gamma_t.
$
By Claim \ref{quassphericalscalar}, we have $R_g\equiv\delta>0$. 
Let $A_t$ and $H_t$ denote the second fundamental form and the mean curvature of $\Sigma_t$ induced from metric $g$ with respect to the $\partial_t$-direction. Since
$H_t=u^{-1}\bar H_t$, by the estimate \eqref{quassphericaltwobounds} of $u$, we get
\begin{equation*}
H_1\geq \varepsilon^{-1}e^{-M}\min_{\Sigma_1}\bar H_1.
\end{equation*}
Hence, we can choose a sufficiently small $\varepsilon$ so that $H_1>h$. Set $h_0=-H_0$, $h_1=H_1$. Note that on $\Sigma_1$, the outward direction is the same as the $\partial_t$-direction; but on $\Sigma_0$, the outward direction is opposite to the $\partial_t$-direction. Having above analysis, we conclude that $(\Sigma,\gamma_0,h_0)$ is PSC-cobordant to $(\Sigma,\gamma_1,h_1)$ with $h_1>h$.
\end{proof}
\medskip
\begin{center}
\begin{tikzpicture}
\draw[semithick] (0,2) ellipse (2 and 0.7);
\draw[semithick] (-1,-2) arc(180:360:1 and 0.3);
\draw[dashed, color=gray] (-1,-2) arc(-180:-360:1 and 0.3);
\draw[semithick] (-1.3,-0.5) arc(180:360:1.3 and 0.4);
\draw[dashed, color=gray] (-1.3,-0.5) arc(-180:-360:1.3 and 0.4);
\draw plot[smooth] coordinates {(-1,-2) (-1.3,-0.5) (-2,2)};
\draw plot[smooth] coordinates {(1,-2) (1.3,-0.5) (2,2)};
\draw (0,0.5) node {$(\Omega,g)$};
\draw[->] (2.1,2)--(2.5,2);
\draw (4.9,2) node {$(\Sigma_1,\gamma_1,h_1),\ \ h_1\triangleq H_1>h$};
\draw[->] (1.5,-0.5)--(1.9,-0.5);
\draw (2.6,-0.5) node {$(\Sigma_t,\gamma_t)$};
\draw[->] (1.1,-2)--(1.5,-2);
\draw (3.7,-2) node {$(\Sigma_0,\gamma_0,h_0),\ \ h_0\triangleq -H_0$};
\draw[->] (-1,2.64)--(-1.2,3.2);
%\draw (-1.4,2.9) node {$\nu$};
\draw (0.8,3.7) node {$\nu$: outward normal};
\draw[->] (-1,-2.15)--(-1.1,-2.8);
\draw (0.8,-3.2) node {$\nu$: outward normal};
\draw[->] (-3.4,-1.7)--(-3.4,1.9);
\draw (-3.8,0) node{t};
\end{tikzpicture}\\
\vspace{2ex}
{\bf Fig 1.} PSC-cobordism of Bartnik data
\end{center}
\bigskip

\begin{proof}[Proof of Theorem \ref{pscextension1}]
By Theorem 1.4 in \cite{KW}, or more strongly, by Theorem 4.5.1 in \cite{Gro69}, there is a metric $g_1$ on $X$ with positive scalar curvature. We first assume $Y$ has only one connected component. Denote the induced metric on $Y$ from $g_1$ by $\gamma_1$, and the mean curvature of $Y$ in $(X,g_1)$ with respect to the outward normal by $h$. We may assume $\gamma_1>\gamma$ after a suitable rescaling. By Lemma \ref{PSCcobordismlm}, there are smooth functions $h_0$ and $h_1$ on $Y$, such that $(Y,\gamma,h_0)$ is PSC-cobordant to $(Y,\gamma_1,h_1)$ and $h_1>-h$. Denote the corresponding region of PSC-cobordism by $(\Omega,g_0)$. Glue $(\Omega,g_0)$ to $(X,g_1)$ along $(Y,\gamma_1)$. Denote the obtained manifold by $(\tilde X,\tilde g)$. It is not hard to see that $
\tilde g$ is Lipschitz across the corners $(Y,\gamma_1)$. Away from $(Y,\gamma_1)$, $\tilde g$ is smooth and $R_{\tilde g}>0$. Since $h_1>-h$ on the corners, after carrying out Miao's mollifiying procedure (\cite{Miao02}) for the corners along $(Y,\gamma_1)$ and a suitable conformal deformation that preserves the boundary metric, we may get a smooth PSC metric $g$ on $\tilde X$ with $g=\tilde g$ on $\partial X$. Apparently, $g|_{\partial \tilde X}=\gamma$. Since $\Omega$ is diffeomorphic to $Y\times[0,1]$, $\tilde X$ is diffeomorphic to $X$. Thus $g$ can be regarded as a metric on $X$. 

 When $Y$ has multiple connected components, the proof is essentially the same as above, except that we construct PSC-cobordism for every component. This completes the proof.
\end{proof}
\begin{rem}
From the proof of Theorem \ref{pscextension1} and Theorem 6 in \cite{SWWZ}, we see that there is a constant $C$ depending on the topology of $X$ and $\gamma$ such that we can prescribe any mean curvature $H\leq C$ for a PSC extension of $\gamma$. 
\end{rem}

But the following example shows that we can not even further prescribe zero mean curvature. 
\begin{exm}
The standard metric $\gamma_{\rm std}$ on $\mathbf S^2$ can not be extended to a NNSC metric on $\mathbf T^3\setminus\mathbf D^3$ with zero mean curvature. Otherwise, after capping $\mathbf T^3\setminus\mathbf D^3$ with the standard hemisphere, mollifying the corners and taking a suitable conformal deformation, we can get a smooth PSC metric on $\mathbf T^3$, which is a contradiction. 
\end{exm}

Now we give the proof of Theorem \ref{finiteinfmeancurpsc1}, which can be regarded as an application of the PSC cobordism Lemma \ref{PSCcobordismlm}.

\begin{proof}[Proof of Theorem \ref{finiteinfmeancurpsc1}]
By assumption, $\Sigma$ can be smoothly embedded in $\mathbf R^{n+1}$. Choose an arbitrary embedding map $F:\Sigma\rightarrow\mathbf R^{n+1}$. Then $F^*(g_{E})$ is a smooth metric on $\Sigma$, where $g_E$ denotes the standard Euclidean metric on $\mathbf R^{n+1}$. For $\lambda>0$, set $F_{\lambda}=\lambda\cdot F$. Obviously, $F^*_{\lambda}(g_E)=\lambda^2F^*(g_{E})$. So there is a $\lambda_0$ such that $F^*_{\lambda_0}(g_E)>\gamma$. 
Denote the mean curvature of $F_{\lambda_0}(\Sigma)$ in $\mathbf R^{n+1}$ with respect to the outward normal by $\tilde H$ and the exterior region of $F_{\lambda_0}(\Sigma)$ in $\mathbf{R}^{n+1}$ by $\tilde\Omega$. By Lemma \ref{PSCcobordismlm}, there are smooth functions $H_0$ and $H_1$ on $\Sigma$, such that $(\Sigma,\gamma,H_0)$ is PSC-cobordant to $(\Sigma,F^*_{\lambda}(g_E),H_1)$ via a Riemannian region $(\Sigma\times [0,1],\check g)$, and $H_1>\tilde H$. Take $C=\max_{\Sigma} (-H_0)$. 

Suppose $H$ is a smooth function on $\Sigma$ that satisfies $H>C$, and $(\Omega, g)$ is a fill-in of $(\Sigma,\gamma,H)$ with NNSC. Glue $(\Omega, g)$ to $(\Sigma\times [0,1],\check g)$ along $(\Sigma\times\{0\},\gamma)$, then glue the obtained manifold to $(\tilde\Omega,g_E)$ along $(\Sigma\times\{1\},F^*_{\lambda_0}(g_E))$. Denote the new manifold by $(M,\hat g)$. Then $(M,\hat g)$ is a complete manifold with corners and a flat end. The scalar curvature of $(M,\hat g)$ is nonnegative away from the corners. Along the corners $\Sigma\times\{0\}$ and $\Sigma\times\{1\}$, the mean curvature with respect to the inside metric in the outgoing direction is strictly greater than the mean curvature with respect to the outside metric in the outgoing direction, where the outgoing direction is pointing towards infinity. By Miao's result \cite{Miao02}, $(M,\hat g)$ must has positive mass, which contradicts the flatness of the end. Consequently, for pointwisely sufficient large $H$, $(\Sigma,\gamma,H)$ admits no fill-in of NNSC. 

From the proof, we see the threshold $C$ depends only on $(\Sigma,\gamma)$ and the embedding $F$. Finally we take the infimum of $C$ for all embeddings to get a constant depending only on $(\Sigma,\gamma)$ but not the embedding. This completes the proof. 
\end{proof}
Theorem \ref{finiteinfmeancurpsc1} can be generalized to fill-ins with scalar curvature bounded from below. 
\begin{thm}
Suppose $\Sigma\in\mathcal C_{n}$ $(2\leq n\leq 6)$. For any metric $\gamma$ on $\Sigma$ and any constant $\kappa$, there is a constant $h_0(\Sigma,\gamma,\kappa)$ such that for any smooth function $H$ on $\Sigma$ satisfying $H>h_0(\Sigma,\gamma,\kappa)$, $(\Sigma,\gamma,H)$ admits no fill-in of metrics with scalar curvature $R\geq n(n+1)\kappa$.
\end{thm}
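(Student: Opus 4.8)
The plan is to split the argument according to the sign of $\kappa$. The cases $\kappa\ge 0$ are immediate: for $\kappa=0$ this is precisely Theorem \ref{finiteinfmeancurpsc1}, and for $\kappa>0$ any fill-in with $R\ge n(n+1)\kappa>0$ is in particular a nonnegative scalar curvature fill-in, so Theorem \ref{finiteinfmeancurpsc1} applies and one may simply take $h_0(\Sigma,\gamma,\kappa)=h_0(\Sigma,\gamma)$. Thus the real content is the case $\kappa<0$, and there I would run the proof of Theorem \ref{finiteinfmeancurpsc1} essentially verbatim, replacing Euclidean space $\mathbf R^{n+1}$ by the hyperbolic space $\mathbf H^{n+1}_\kappa$ of constant sectional curvature $\kappa$ (so that $R_{\mathbf H^{n+1}_\kappa}=n(n+1)\kappa$) and replacing the asymptotically flat positive mass theorem by the asymptotically hyperbolic one, which is available for $2\le n\le 6$ by \cite{CD,EHLS}. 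If one prefers, a preliminary rescaling $g\mapsto|\kappa|g$ reduces everything to $\kappa=-1$, at the cost of rescaling $\gamma$, $H$ and the constant $h_0$ accordingly.

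The only step that is not a literal transcription is realizing $\Sigma$ as a hypersurface of $\mathbf H^{n+1}_\kappa$ whose induced metric dominates $\gamma$. Since $\Sigma\in\mathcal C_n$, fix a smooth embedding $F\colon\Sigma\to\mathbf R^{n+1}$ and set $g_F=F^{*}g_E$. Fix a point $p\in\mathbf H^{n+1}_\kappa$; by Cartan--Hadamard, $\exp_p\colon T_p\mathbf H^{n+1}_\kappa\cong\mathbf R^{n+1}\to\mathbf H^{n+1}_\kappa$ is a diffeomorphism, and in these normal coordinates the metric is $dr^2+s_\kappa(r)^2\,d\sigma^2$ with $s_\kappa(r)=\sinh(\sqrt{-\kappa}\,r)/\sqrt{-\kappa}\ge r$, so $\exp_p^{*}g_{\mathbf H}\ge g_E$ as quadratic forms on $\mathbf R^{n+1}$. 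Choosing $\lambda>0$ with $\lambda^2 g_F>\gamma$, the map $\iota=\exp_p\circ(\lambda F)$ is an embedding of $\Sigma$ into $\mathbf H^{n+1}_\kappa$ with $\iota^{*}g_{\mathbf H}\ge\lambda^2 g_F>\gamma$. Write $\hat\gamma=\iota^{*}g_{\mathbf H}$, let $\tilde\Omega$ be the unbounded component of $\mathbf H^{n+1}_\kappa\setminus\iota(\Sigma)$ with the hyperbolic metric — an asymptotically hyperbolic manifold with exactly hyperbolic end, hence vanishing mass, and with inner boundary $(\Sigma,\hat\gamma)$ — and let $\tilde H$ be the mean curvature of $\iota(\Sigma)$ with respect to the normal pointing to infinity.

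With this in hand the argument proceeds as before. By Lemma \ref{PSCcobordismlm} applied with $\gamma_0=\gamma<\hat\gamma=\gamma_1$, there are smooth functions $H_0,H_1$ on $\Sigma$ such that $(\Sigma,\gamma,H_0)$ is PSC-cobordant to $(\Sigma,\hat\gamma,H_1)$ through a region $(\Sigma\times[0,1],\check g)$ with $R_{\check g}>0$ (hence $\ge n(n+1)\kappa$) and $H_1>\tilde H$; put $C=\max_\Sigma(-H_0)$. If $H>C$ and $(\Omega,g)$ were a fill-in of $(\Sigma,\gamma,H)$ with $R_g\ge n(n+1)\kappa$, glue $(\Omega,g)$, $(\Sigma\times[0,1],\check g)$ and $(\tilde\Omega,g_{\mathbf H})$ along the matching boundaries to obtain a complete manifold $(M,\hat g)$ with a single exactly hyperbolic end, with $R_{\hat g}\ge n(n+1)\kappa$ away from the two corner hypersurfaces, and with the mean curvature from the inner side (in the direction toward infinity) strictly exceeding the one from the outer side along each corner — exactly as in the proof of Theorem \ref{finiteinfmeancurpsc1}. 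By the asymptotically hyperbolic counterpart of Miao's corner result — smoothing the corners by mollification and a conformal deformation that preserves $R\ge n(n+1)\kappa$, together with the asymptotically hyperbolic positive mass theorem and its rigidity — such $(M,\hat g)$ must carry positive mass, contradicting the exact hyperbolicity of its end. Finally, taking the infimum of $C$ over all embeddings $F$ and base points $p$ yields a constant $h_0(\Sigma,\gamma,\kappa)$ depending only on $(\Sigma,\gamma,\kappa)$. The main obstacle is precisely this last ingredient: the asymptotically hyperbolic version of Miao's theorem, asserting that a complete asymptotically hyperbolic manifold with exactly hyperbolic end, $R\ge n(n+1)\kappa$ away from a Lipschitz corner, and a strictly favorable mean-curvature jump across it, has positive mass; here one must check that the conformal change restoring smoothness of the scalar curvature neither violates the bound $n(n+1)\kappa$ nor introduces mass. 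The remainder is a routine adaptation of the proof of Theorem \ref{finiteinfmeancurpsc1}.
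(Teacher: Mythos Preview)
Your approach is correct and is the natural hyperbolic transcription of the proof of Theorem \ref{finiteinfmeancurpsc1}, but the paper takes a slightly different route. Rather than using the exact hyperbolic space $\mathbf H^{n+1}_\kappa$ (mass zero) as the exterior model and then invoking the full asymptotically hyperbolic positive mass theorem together with its rigidity, the paper embeds $\Sigma$ into an anti-de Sitter--Schwarzschild end with \emph{negative} mass $m<0$ and constant scalar curvature $-n(n+1)$, and then appeals to Theorem~3 in \cite{ACG}, which asserts that an AH manifold with $R\ge -n(n-1)$ and everywhere nonpositive mass aspect function must be hyperbolic. This sidesteps precisely the point you flag as the ``main obstacle'': since the model end already has strictly negative mass aspect, one only needs the inequality part of the corner theorem (mollify as in \cite{Miao02}, conformally deform as in \cite{BQ}), not the strict-positivity-from-strict-jump or rigidity statement. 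Your route via $\exp_p\circ(\lambda F)$ and the zero-mass hyperbolic exterior also works---the strict inequality $H_1>\tilde H$ at the corners means Miao's mollification produces $R\ge n(n+1)\kappa$ directly in the smoothed shell, so the resulting smooth manifold has $R\ge n(n+1)\kappa$, exactly hyperbolic end, and is visibly non-hyperbolic on the PSC-cobordism piece, contradicting rigidity of the smooth AH PMT from \cite{CD,EHLS}---but it leans on the more recent full AH PMT, whereas the paper's negative-mass trick gets by with the older and more elementary \cite{ACG}. Your embedding argument via the exponential map and the Cartan--Hadamard comparison $\exp_p^\ast g_{\mathbf H}\ge g_E$ is more explicit than the paper's one-line justification.
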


\begin{proof}[Sketch of Proof.]
The proof is essential the same as the proof of Theorem \ref{finiteinfmeancurpsc1}. The difference is that we embed $(\Sigma,\gamma)$ into an anti-de Sitter-Schwarzschild end with negative mass, namely
a rotational symmetric AH end with scalar curvature $R=-n(n+1)$ and mass $m<0$. The required embedding exists since $\Sigma$ can be embedded in $\mathbf R^{n+1}$. To draw the contradiction, we first take a suitable scaling, then carry out the mollifying procedure for the corners in \cite{Miao02} and the conformal deformation in \cite{BQ}, finally use the positive mass theorem for AH manifolds with negative mass aspect function (Theorem 3 in \cite{ACG}). 
\end{proof}

 %%----------------------------------------------------------------------------------------------------------------------------------------------------------------------------------------------------------------------------------------------------------------------------------
 %%----------------------------------------------------------------------------------------------------------------------------------------------------------------------------------------------------------------------------------------------------------------------------------

\section{Relationship between $\Lambda_{+,\,\kappa}$ and positive mass theorems}\label{section3}

In this section, we investigate the relationship between the quantity $\Lambda_{+,\,\kappa}\left(\mathbf{S}^{n-1}, \gamma_{\rm std}\right)$ and positive mass theorems for AF and AH manifolds. Let us begin with some important notions. 

\begin{defn}\label{AFmanifold}
Let $n\geq 3$. A Riemannian manifold $(M^n, g)$ is said to be asymptotically flat (AF) if there is a compact set $K\subset M^n$ such that $M^n\setminus K$ is diffeomorphic to $\mathbf{R}^n\setminus B_1(0)$, and in this coordinates, $g$ satisfies
$$
\left|g_{i j}-\delta_{i j}\right|+|x|\left|\partial g_{i j}\right|+|x|^{2}\left|\partial^{2} g_{i j}\right|+|x|^{3}\left|\partial^{3} g_{i j}\right|
=O\left(|x|^{-p}\right)$$
for some $p>\frac{n-2}2$. Furthermore, we require that
$$\int_{M^n}|R_g|\,d\mu_g<\infty.$$
\end{defn}
\begin{defn}
An AF manifold $(M^n, g)$   is called asymptotically Schwarzschild (AS) if there is a compact set $K\subset M^n$ such that $M^n\setminus K$ is diffeomorphic to $\mathbf{R}^n\setminus B_1(0)$, and in this coordinates, $g$ satisfies
$$g_{ij}=\left(1+\frac{2m}{n-2}|x|^{2-n}\right)\delta_{ij}+b_{ij},$$
where $m$ is a constant and $b_{ij}$ decays as
$$\left|b_{ij}\right|+|x|\left|\partial b_{ij}\right|+|x|^{2}\left|\partial^{2} b_{ij}\right|+|x|^{3}\left|\partial^{3} b_{ij}\right|
=O\left(|x|^{1-n}\right).$$
\end{defn}
For AF manifolds, we can define a conserved quantity called the ADM mass.

\begin{defn}[\cite{ADM}]\label{ADMmass}
The  Arnowitt-Deser-Misner (ADM) mass  of an AF manifold $(M^n,g)$ is defined by
\begin{equation*}
m_{\rm ADM}(M^n,g)=\lim_{r\to\infty}\frac{1}{{ 2(n-1)}\omega_{n-1}}\int_{S_r}
\left(g_{ij,i}-g_{ii,j}\right)\nu^j\,dS_r,
\end{equation*}
where $S_{r}$ is the coordinate sphere near the infinity, $\nu$ is the Euclidean unit outward normal to $S_r$, and $dS_r$ is the Euclidean area element on $S_r$.
\end{defn}
We say that {\it positive mass theorem holds for AF $n$-manifolds if the ADM mass of any AF $n$-manifold with nonnegative scalar curvature is nonnegative, and the ADM mass vanishes if and only if this AF manifold is isometric to $\mathbf{R}^n$.} 

The last notion is asymptotically hyperbolic manifolds. Here we follow the definition by X.-D. Wang \cite{Wang}.
\begin{defn}[\cite{Wang}]\label{AHdef}
A complete noncompact Riemannian manifold $(M^n,g)$ is said to be asymptotically hyperbolic (AH) if there is a compact manifold $X$ with boundary and a smooth
function $\rho$ on $ X$ such that: 
\begin{enumerate}
\item[(i)] $M^n$ is differeomorphic to  $X\setminus \partial X$ (we identify $M^n$ with $X\setminus \partial X$ in the sequel);
\item [(ii)]$\rho=0$ on $\partial X$, and $\rho>0$ on $X\setminus \partial X$;
\item[(iii)] $\bar g=\rho^2g$ extends to a smooth metric on $X$;
\item[(iv)] $|d\rho |_{\bar g}=1$ on $\partial X$;
\item [(v)] each component $\Sigma$ of $\partial X$ is the standard round sphere $(\mathbf{S}^{n-1},\gamma_{\rm std})$, and in a collar neighborhood of $\Sigma$, 
$$g=\sinh^{-2}\rho\left(d\rho^2+\gamma_\rho\right)$$ with
$\gamma_\rho$ being a $\rho$-dependent family of metrics on $\mathbf{S}^{n-1}$ that has the expansion
   $$
   \gamma_\rho=\gamma_{\rm std}+\frac{\rho^n}{n}h+O\left(\rho^{n+1}\right),
   $$
where $h$ is a $C^{n-1}$  symmetric $2$-tensor on $\mathbf{S}^{n-1}$.
\end{enumerate}
\end{defn}

Let  $(M^n,g)$ be an AH manifold. For simplicity, we assume $M^n$ has only one end. We say {\it PMT holds for AH manifolds, if for any such $(M^n,g)$, the condition $R_g\geq -n(n-1)$ implies
\begin{equation*}
\int_{\mathbf{S}^{n-1}}\tr_{\gamma_{\rm std}}h\,d\mu_{\gamma_{\rm std}}\geq 0,
\end{equation*}
and the equality holds only when $(M^n,g)$ is isometric to the hyperbolic space $\mathbf{H}^n$.} For some results on PMT for AH manifolds, please see \cite{Wang,CH,ACG,CD}.

The first result in this  section is the following.

\begin{thm}\label{pmt1}
$\Lambda_{+}\left(\mathbf{S}^{n-1}, \gamma_{\rm std}\right)=(n-1)\omega_{n-1}$ is equivalent to PMT holds for AF $n$-manifolds. 
 \end{thm}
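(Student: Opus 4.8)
The plan is to establish the equivalence by proving two implications, both of which hinge on relating the generalized Brown-York mass of large coordinate spheres in an AF manifold to its ADM mass.

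\medskip

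\textbf{($\Leftarrow$) PMT implies $\Lambda_+(\mathbf S^{n-1},\gamma_{\rm std})=(n-1)\omega_{n-1}$.}
First I would observe that $\Lambda_+(\mathbf S^{n-1},\gamma_{\rm std})\geq (n-1)\omega_{n-1}$ by exhibiting an explicit NNSC fill-in with positive mean curvature whose total mean curvature is arbitrarily close to $(n-1)\omega_{n-1}$: one takes the flat closed unit ball $\mathbf D^n$ (boundary mean curvature exactly $n-1$, total mean curvature exactly $(n-1)\omega_{n-1}$), or slightly enlarged balls together with a small inward perturbation to keep $H>0$ while approaching the value. For the reverse inequality, suppose $(\Omega,g)$ is a NNSC fill-in of $(\mathbf S^{n-1},\gamma_{\rm std},H)$ with $H>0$ and $\int_{\mathbf S^{n-1}}H\,d\mu>(n-1)\omega_{n-1}$. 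Since the boundary metric is the standard round sphere, I can isometrically embed $\partial\Omega$ as the unit round sphere in $\mathbf R^n$, and the Shi–Tam type construction (quasi-spherical foliation / a collar built from a monotone sequence of round spheres) produces an AF manifold obtained by gluing $(\Omega,g)$ to an exterior region; the corner matching condition needed for Miao's mollification is precisely $H>H_0=n-1$ somewhere in an averaged sense — here one invokes the monotonicity of the Hawking-type or Brown-York-type quantity along the quasi-spherical foliation. The upshot is an AF NNSC manifold whose ADM mass is controlled by $\frac{1}{(n-1)\omega_{n-1}}\big((n-1)\omega_{n-1}-\int H\big)<0$, contradicting PMT. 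Hence every NNSC fill-in with $H>0$ satisfies $\int H\leq (n-1)\omega_{n-1}$.

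\medskip

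\textbf{($\Rightarrow$) $\Lambda_+(\mathbf S^{n-1},\gamma_{\rm std})=(n-1)\omega_{n-1}$ implies PMT.}
Here I would argue by contradiction: suppose $(M^n,g)$ is AF with $R_g\geq 0$ but $m_{\rm ADM}(M^n,g)<0$ (the rigidity clause is handled separately, see below). Take a large coordinate sphere $S_r=\partial(M\setminus (M\setminus K_r))$ for $r$ large, where $K_r$ is the region enclosed; then $(\Omega_r,g):=(K_r,g)$ is a NNSC fill-in of $(S_r,g|_{S_r},H_g)$. The boundary metric $g|_{S_r}$ is not exactly round, so this is not directly a fill-in of $(\mathbf S^{n-1},\gamma_{\rm std})$; the fix is to first pass through the monotonicity lemma (Lemma \ref{c0monotonicity} cited in the introduction) to connect $(S_r,g|_{S_r})$ to a genuine round sphere via a NNSC collar, i.e.\ to show $m_{\rm BY}(\partial\Omega_r,g)\geq$ (the Brown–York mass of a round comparison sphere), and then to compute that the latter, after rescaling, converges to $m_{\rm ADM}(M^n,g)$ as $r\to\infty$. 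Since by hypothesis $\Lambda_+(\mathbf S^{n-1},\gamma_{\rm std})=(n-1)\omega_{n-1}$, the generalized Brown–York mass $m_{\rm BY}$ of any NNSC fill-in of a round sphere is $\geq 0$; chaining these gives $m_{\rm ADM}(M^n,g)\geq 0$, a contradiction. For the equality/rigidity case, if $m_{\rm ADM}=0$ one feeds the extra total-mean-curvature slack into the monotonicity to force the collar to be flat and $M\setminus K$ to be flat, then a standard analytic continuation / unique continuation argument upgrades this to $M\cong \mathbf R^n$.

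\medskip

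\textbf{Main obstacle.} The delicate point is the second implication: showing that the (generalized) Brown–York mass of large coordinate spheres converges to the ADM mass. In dimension $3$ (Shi–Tam, Fan–Shi–Tam) this uses Weyl-type isometric embedding of nearly round spheres into $\mathbf R^3$, which is unavailable in higher dimensions. So the crux is to replace that argument by a purely intrinsic comparison: using the monotonicity lemma to sandwich $m_{\rm BY}(S_r)$ between explicitly computable quantities built from the asymptotics $g_{ij}=\delta_{ij}+O(|x|^{-p})$, and carefully tracking the error terms (the $O(|x|^{-p})$ deviation of $g|_{S_r}$ from roundness, the $O(|x|^{-1})$ correction to $H_g$ beyond $\frac{n-1}{r}$) to show that after normalization they contribute $m_{\rm ADM}+o(1)$. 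Managing these asymptotic estimates, and verifying that the monotonicity lemma applies to the not-quite-round induced metrics, is where the real work lies; everything else is soft.
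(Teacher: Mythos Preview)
Your overall strategy matches the paper's: the unit ball gives the lower bound, the Shi--Tam quasi-spherical extension gives the upper bound (assuming PMT), and for the converse you show $m_{\rm BY}(S_r)\to m_{\rm ADM}$ via the monotonicity lemma applied to the nearly-round induced metrics on large coordinate spheres. You have correctly located the main difficulty.

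There is, however, one essential step you omit. Working directly with general AF decay $g_{ij}=\delta_{ij}+O(|x|^{-p})$, $p>(n-2)/2$, the monotonicity lemma only yields
\[
\Lambda_+(S_r,\gamma_r)=r^{n-2}(n-1)\omega_{n-1}\big(1+O(r^{-p})\big),
\]
and the error $O(r^{n-2-p})$ need not vanish as $r\to\infty$ when $p<n-2$; so the sandwich does not by itself pin down $m_{\rm BY}(S_r)$ to $m_{\rm ADM}+o(1)$. The paper avoids this by first invoking Schoen--Yau's reduction \cite{SY81} to the asymptotically Schwarzschild case, where the induced metric on $S_r$ deviates from round by only $O(r^{1-n})$. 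In that regime the monotonicity sandwich gives $\Lambda_+(S_r,\gamma_r)=(r^{n-2}+m)(n-1)\omega_{n-1}+O(r^{-1})$, while a direct computation gives $\int_{S_r}H_r\,d\mu=(n-1)\omega_{n-1}r^{n-2}+O(r^{-1})$, so $m_{\rm BY}(S_r)=m+O(r^{-1})\geq 0$.

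For rigidity, the paper does not argue via unique continuation along the collar. Once nonnegativity of the mass is established for all AF manifolds with $R\geq 0$, the standard deformation argument applies: if $R_g\not\equiv 0$ a conformal change drops the mass below zero; if $R_g\equiv 0$ but $\ric_g\not\equiv 0$, short-time Ricci flow produces positive scalar curvature with unchanged (zero) mass \cite{DM}; and a Ricci-flat AF manifold is isometric to $\mathbf R^n$ by \cite{BKN}.
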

 
 Before prove Theorem \ref{pmt1}, we introduce the following useful lemma, which is on the monotonicity of $\Lambda_+$-invariant. 
 \begin{lm}\label{c0monotonicity}
Let $\Sigma$ be a closed Riemanniann manifold and $\{\gamma_t\}_{t\in[0,1]}$ be a smooth path of NNSC metrics on $\Sigma$. Assume $\gamma_t$ monotonically increases, namely $\gamma_{t_2}\geq\gamma_{t_1}$ for $t_2\geq t_1$. Then
$$\Lambda_+\left(\Sigma,\gamma_0\right)\leq\Lambda_+\left(\Sigma,\gamma_1\right).$$
\end{lm}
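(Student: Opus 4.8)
\textbf{Proof proposal for Lemma \ref{c0monotonicity}.}

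The plan is to show that any NNSC fill-in of $(\Sigma,\gamma_0,H_0)$ with $H_0>0$ can be enlarged, by attaching a collar, to a NNSC fill-in of $(\Sigma,\gamma_1,H_1)$ with $H_1>0$ and $\int_\Sigma H_1\,d\mu_{\gamma_1}$ as close to $\int_\Sigma H_0\,d\mu_{\gamma_0}$ as we wish, which immediately yields the inequality of suprema. The collar will be built from a quasi-spherical type construction analogous to the one in Lemma \ref{PSCcobordismlm}, but this time I need the total mean curvature to be \emph{almost preserved} rather than merely made large, so the PSC-cobordism lemma cannot be invoked as a black box.

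Here is the order of steps. First, given $\varepsilon>0$ and a NNSC fill-in $(\Omega,g)$ of $(\Sigma,\gamma_0,H_0)$ with $H_0>0$, consider the monotone path $\{\gamma_t\}$ and the background metric $\bar g=dt^2+\gamma_t$ on $\Sigma\times[0,1]$; since $\gamma_t$ is increasing, the background mean curvature $\bar H_t$ of each slice is nonnegative, but I will need it strictly positive, so I first perturb the path slightly (e.g.\ replace $\gamma_t$ by a path that is strictly increasing, at the cost of changing the endpoints by an arbitrarily small amount, then rescale) to arrange $\bar H_t>0$. Second, solve the quasi-spherical equation \eqref{quassphericalscalareq0} with $f=0$ and initial condition $u(\cdot,0)\equiv c$ on $\Sigma\times[0,1]$, where $c$ is now chosen \emph{large} rather than small. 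The point is the reverse of Claim \ref{quassphericalestimate}: for $c$ large, the ODE comparison gives $u$ close to $c$ throughout $[0,1]$ (the $u^2\Delta u$ and cubic terms are controlled after noting that the maximum and minimum of $u$ obey ODEs whose solutions stay within a factor $e^{\pm M}$, and by taking $c$ large the multiplicative distortion of $H_t=u^{-1}\bar H_t$ can be made to cost only an $O(\varepsilon)$ change). Third, glue $(\Sigma\times[0,1], u^2dt^2+\gamma_t)$ onto $(\Omega,g)$ along $(\Sigma,\gamma_0)$: by Claim \ref{quassphericalscalar} the collar is scalar-flat, and the matching of mean curvatures across the corner goes the right way (inside mean curvature $\geq$ outside) provided $c$ is large, since the collar's mean curvature on the $\gamma_0$-side, measured toward $\Omega$, is $-u^{-1}\bar H_0$ which is very negative while the fill-in contributes $H_0>0$; after Miao's mollification \cite{Miao02} and a boundary-metric-preserving conformal deformation one obtains a genuine smooth NNSC fill-in of $(\Sigma,\gamma_1',H_1)$ where $\gamma_1'$ is the (slightly perturbed) endpoint metric and $H_1=u^{-1}\bar H_1>0$. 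Fourth, estimate $\int_\Sigma H_1\,d\mu_{\gamma_1'}$: since $u$ is within $e^{\pm M}$ of a constant close to any prescribed value and $\bar H_1$ is fixed, choosing $c$ appropriately (in fact one wants $u$ near $1$ on the outer slice, which by the ODE bounds forces the right balance) makes this integral as close as desired to $\int_\Sigma H_0\,d\mu_{\gamma_0}$; taking $\varepsilon\to 0$ and undoing the perturbation of the path gives $\Lambda_+(\Sigma,\gamma_0)\leq\Lambda_+(\Sigma,\gamma_1)$.

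The main obstacle I anticipate is the fourth step: controlling the \emph{total} mean curvature of the outer slice while simultaneously keeping $R_g\geq 0$ (here $=0$) and $H_1>0$ and the corner inequality in the correct direction. Unlike in Lemma \ref{PSCcobordismlm}, where one only needed $H_1$ pointwise large, here I must thread the needle so that $u$ is neither too large (which would shrink $H_1=u^{-1}\bar H_1$ and lose total mean curvature) nor too small (which would break the corner condition on the inner slice). The resolution should be that the distortion factor $e^{\pm M}$ in \eqref{quassphericaltwobounds} depends only on $M=\max \bar H_t^{-1}(|R_{\gamma_t}|+|R_{\bar g}|)$, which in turn can be driven to $0$ by choosing the path $\gamma_t$ to traverse from $\gamma_0$ to $\gamma_1$ slowly over a long parameter interval $[0,L]$ (rescaling $t$), so that $\bar H_t$ stays bounded below while the ``speed'' terms shrink; then $u\equiv 1$ becomes an almost-solution and the total mean curvature is almost exactly preserved. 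Making this rescaling argument precise — in particular checking that lengthening the collar does not reintroduce a large $M$ through the curvature of $\bar g$ — is where the real work lies.
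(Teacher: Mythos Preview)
Your approach has a genuine gap at the fourth step, and the stretching argument you propose to fix it does not work. When you reparametrize the path over $[0,L]$, the background mean curvature scales as $\bar H_t\sim L^{-1}$, but the slice scalar curvatures $R_{\gamma_t}$ do not change at all; hence the quantity $M=\max \bar H_t^{-1}(|R_{\gamma_t}|+|R_{\bar g}|)$ contains a term of order $L$ and blows up rather than tending to zero. So you cannot drive the multiplicative distortion $e^{\pm M}$ in the $C^0$ bounds on $u$ to $1$, and therefore you cannot force $\int_\Sigma H_1\,d\mu$ to approximate $\int_\Sigma H_0\,d\mu$ this way. More fundamentally, trying to make $u$ pointwise close to a constant is the wrong mechanism here.

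The paper's proof avoids this entirely by two ideas you are missing. First, instead of a constant initial value, one takes $u(\cdot,0)=\bar H_0/H$, so that the collar's inner mean curvature $H_0=u^{-1}\bar H_0$ \emph{exactly} equals the fill-in's $H$; there is no corner inequality to arrange, and Miao's mollification applies with matching mean curvatures. Second --- and this is the heart of the matter --- one computes the first variation of the total mean curvature directly:
\[
\frac{d}{dt}\int_{\Sigma_t}H_t\,d\mu_{\bar\gamma_t}
=\frac{1}{2}\int_{\Sigma_t}(\bar H_t^2-\|\bar A_t\|^2)u^{-1}\,d\mu
+\frac{1}{2}\int_{\Sigma_t}R_{\bar\gamma_t}\,u\,d\mu.
\]
Strict monotonicity of the (slightly perturbed) path gives $\bar A_t>0$, hence $\bar H_t^2>\|\bar A_t\|^2$, and the NNSC hypothesis on the slices gives $R_{\bar\gamma_t}\geq 0$. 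Thus $\int_{\Sigma_t}H_t\,d\mu$ is \emph{monotone increasing} in $t$ regardless of any pointwise control on $u$. This immediately yields $\int_\Sigma H\,d\mu_{\gamma_0}\leq \int_\Sigma H_1\,d\mu_{\bar\gamma_1}\leq\Lambda_+(\Sigma,\bar\gamma_1)$, and the $e^{2\varepsilon t}$ perturbation is removed by letting $\varepsilon\to 0$ and using the scaling law for $\Lambda_+$. Note that in your outline the NNSC assumption on $\{\gamma_t\}$ is never used in a visible way; in the actual argument it is exactly what makes the second integral nonnegative.
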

\begin{proof}
For an arbitrary $\varepsilon>0$, let $\bar\gamma_t=e^{2\varepsilon t}\gamma_t$, and let $\bar g=dt^2+\bar\gamma_t$ on $\Sigma\times [0,1]$. Denote $\Sigma\times\{t\}$ by $\Sigma_t$. Let $\bar A_t$ and $\bar H_t$ be the second fundamental form and the mean curvature of $\Sigma_t$ induced from metric $\bar g$ respectively. Clearly, $\bar\gamma_t$ strictly monotonically increases, so $\bar A_t>0$. It follows that $\bar H_t>0$ and $\bar H_t^2-\|\bar A_t\|^2>0$. Assume $(\Omega,\tilde g)$ is a NNSC fill-in of $(\Sigma,\gamma_0,H)$ for some smooth function $H>0$ on $\Sigma$. 

Consider the quasi-spherical metric equation
\begin{equation}\label{Eq: quasi spherical equation 2}
\left\{
\begin{aligned}
\bar H_t\frac{\partial u}{\partial t}&=u^2\Delta_{\bar\gamma_t}u-\frac{1}{2}R_{\bar\gamma_t}u^3+\frac{1}{2}\left(R_{\bar\gamma_t}-R_{\bar g}\right)u\\
\quad u(\cdot,0)&=\frac{\bar H_0}{H}>0.
\end{aligned}
\right.
\end{equation}
for $u(x,t)$ on $\Sigma\times [0,1]$. This equation has a local positive solution around $t=0$. By a similar analysis as in the Claim \ref{quassphericalestimate}, we can prove the solution decreases at most exponentially in time. Since $R_{\bar\gamma_t}\geq 0$, the coefficient of $u^3$ is nonpositive. By comparing $u$ with the solution to the corresponding ODE 
\begin{equation*}
\left\{
\begin{aligned}
\frac{\partial w}{\partial t}&=\frac{M}{2}w,\\
\quad w(0)&\equiv\max_\Sigma\frac{\bar H_0}{H},
\end{aligned}
\right.
\end{equation*}
where $M=\max_{\Sigma\times [0,1]}\bar H^{-1}_t\left(|R_{\gamma_t}|+|R_{\bar g}|\right)$, we find that $u$ grows at most exponentially in time. So the local positive solution can be extended to the whole $[0,1]$. 

Set $g=u^2 dt^2+\bar\gamma_t$. By Claim \ref{quassphericalscalar}, $R_g\equiv 0$. Let $A_t$ and $H_t$ denote the second fundamental form and the mean curvature of $\Sigma_t$ induced from metric $g$. It is not hard to see
\begin{equation}\label{meancurrelationsec3}
A_t=u^{-1}\bar A_t,\quad\  H_t=u^{-1}\bar H_t.
\end{equation}
By the second variation formula, Gauss equation and relation \eqref{meancurrelationsec3}, we have
\begin{equation*}
\begin{split}
\frac{d}{dt}\int_{\Sigma_t}H_t\,d\mu_{\bar\gamma_t}&=\int_{\Sigma_t}\left(H_t^2-\|A_t\|^2-\ric(\nu,\nu)\right)u\,d\mu_{\bar\gamma_t}\\
&=\frac{1}{2}\int_{\Sigma_t}\left(\bar H_t^2-\|\bar A_t\|^2\right)u^{-1}\,d\mu_{\bar \gamma_t}+\frac{1}{2}\int_{\Sigma_t}R_{\bar\gamma_t}u\,d\mu_{\bar\gamma_t},
\end{split}
\end{equation*}
where $\nu=u^{-1}\partial_t$ is the unit normal to $\Sigma_t$. Since $\bar H_t^2>\|\bar A_t\|^2$ and $R_{\bar\gamma_t}\geq 0$, $\frac{d}{dt}\int_{\Sigma_t}H_t\,d\mu_{\bar\gamma_t}>0$.
It follows that
$$\int_{\Sigma}H\, d\mu_{\gamma_0}=\int_{\Sigma}H_0\, d\mu_{\bar\gamma_0}<\int_{\Sigma}H_1\, d\mu_{\bar\gamma_1}.$$

Glue $(\Sigma\times [0,1], g)$ to $(\Omega,\tilde g)$ along their common boundary $(\Sigma,\gamma_0)$. Denote the obtained manifold by $(\hat\Omega,\hat g)$. It is not hard to see that $(\hat\Omega,\hat g)$ is a fill-in of Bartnik data $(\Sigma,\bar\gamma_1,H_1)$ but with conners along $(\Sigma,\gamma_0)$. Away from the corners, $\hat g$ is smooth and has nonnegative scalar curvature. On the corners, both the induced metrics and the mean curvatures from the two sides match. Then after performing Miao's mollifying procedure \cite{Miao02} and a suitable conformal deformation that preserves the boundary metric, we may get a family of smooth NNSC metrics $\{\hat g_\delta\}_{0<\delta\leq\delta_0}$ on $\hat\Omega$ for some $\delta_0>0$ such that $g_\delta|_{\partial\hat\Omega}=\bar\gamma_1$. Moreover, $\hat g_\delta$ locally uniformly converges to $\hat g$ away from the corners in the $C^{1,\,\alpha}$-sense. So the induced mean curvature of $\hat g_\delta$ on $\partial\hat\Omega\simeq\Sigma$, denoted by $\hat H_\delta$, converges to $H_1$ as $\delta\rightarrow 0$ in the $C^\alpha$-sense. Choose $\delta$ sufficiently small so that $\hat H_\delta>0$. Clearly, $(\hat\Omega,\hat g_\delta)$ is a NNSC fill-in of $(\Sigma,\bar\gamma_1,\hat H_\delta)$.  So by definition, 
$$\int_{\Sigma}\hat H_\delta\, d\mu_{\bar\gamma_1}\leq\Lambda_+(\Sigma,\bar\gamma_1).$$
Letting $\delta\rightarrow 0$, we get
$$\int_{\Sigma}H_1\, d\mu_{\bar\gamma_1}\leq\Lambda_+(\Sigma,\bar\gamma_1).$$
It follows that 
$$\int_{\Sigma}H\, d\mu_{\gamma_0}\leq\Lambda_+(\Sigma,\bar\gamma_1).$$
Because $(\Omega,\tilde g)$ is an arbitrary NNSC fill-in of $(\Sigma,\gamma_0,H)$ for arbitrary $H>0$, in fact we have
$$\Lambda_+(\Sigma,\gamma_0)\leq \Lambda_+(\Sigma,\bar\gamma_1).$$
Assume $\Sigma$ has dimension $n$. By the scaling property of $\Lambda_+$-invariant, $$\Lambda_+(\Sigma,\bar\gamma_1)=e^{(n-1)\varepsilon}\Lambda_+(\Sigma,\gamma_1).$$
Since $\varepsilon$ is an arbitrary positive constant, by letting $\varepsilon\rightarrow 0$, we obtain
$$\Lambda_+(\Sigma,\gamma_0)\leq \Lambda_+(\Sigma,\gamma_1).$$
\end{proof}

Now we are ready to prove Theorem \ref{pmt1}. 
\begin{proof}[Proof of Theorem \ref{pmt1}] If PMT holds for AF $n$-manifolds, then by nonincreasing property and convergence of  Brown-York mass along a quasi-spherical foliation constructed in \cite{ST02} (see Lemma 4.2 and Theorem 2.1), we have $$\Lambda_{+}\left(\mathbf{S}^{n-1}, \gamma_{\rm std}\right)\leq (n-1)\omega_{n-1}.$$ On the other hand, since $\left(\mathbf{S}^{n-1}, \gamma_{\rm std}\right)$ is the boundary of the unit ball in $\mathbf R^{n}$, we certainly have $$\Lambda_{+}\left(\mathbf{S}^{n-1}, \gamma_{\rm std}\right)\geq (n-1)\omega_{n-1}.$$ Thus the necessity part holds.

Due to Schoen-Yau's result (the statement in the bottom of p.48 in \cite{SY81}), it is sufficient to prove the sufficiency part for the AS case. Assume $(M^n,g)$ is an AS manifold with nonnegative scalar curvature and mass $m$. 
In an AS coordinates near infinity,
\begin{equation*}
g_{ij}=\left(1+\frac{2m}{n-2}r^{2-n}\right)\delta_{ij}+O(r^{1-n}),
\end{equation*}
where $r=|x|$. 
The Chrisitoffel symbol of $g$ is calculated as
\begin{equation*}
\Gamma^k_{ij}=m\left(\delta_{ij}x^k-\delta_{ik}x^j-\delta_{jk}x^i\right)r^{-n}+O(r^{-n}).
\end{equation*}
Simple calculations give $r_i=\frac{x^i}{r},$ and
\begin{align*}
r_{ij}=\left(\frac{1}{r}-\frac{m}{r^{n-1}}\right)\delta_{ij}-\left(\frac{1}{r^3}-\frac{2m}{r^{n+1}}\right)x^ix^j+O(r^{-n}).
\end{align*}
It follows that
\begin{equation*}
|\nabla_g r|=r^{-1}\sqrt{g^{ij}x^ix^j}=1-\frac{m}{n-2}r^{2-n}+O(r^{1-n}),
\end{equation*}
\begin{align*}
\partial_{k}|\nabla_g r|=\frac{mx^k}{r^n}+O(r^{-n}),
\end{align*}
and
\begin{align*}
\Delta_g r&=g^{ij}r_{ij}=\frac{n-1}{r}-\left(n+\frac{2}{n-2}\right)mr^{1-n}+O(r^{-n}).
\end{align*}
Then the mean curvature of $S_r$ in $(M^n,g)$ is
\begin{align*}
H_r=\diver_g\left(\frac{\nabla_g r}{|\nabla_g r|}\right)=\frac{n-1}{r}\left(1-\frac{n-1}{n-2}\frac{m}{r^{n-2}}\right)+O(r^{-n}).
\end{align*}
It is not hard to see that the induced metric $\gamma_r$ on $S_r$ satisfies
\begin{align*}
\gamma_r=r^2\left(1+\frac{2}{n-2}\frac{m}{r^{n-2}}\right)\left(\gamma_{\rm std}+\beta\right),
\end{align*}
where $\beta$ is a symmetric $2$-tensor on $\mathbf S^{n-1}$ and decays as $\beta=O(r^{1-n})$.
So the area of $S_r$ is
\begin{equation*}
\Area(S_r)=\omega_{n-1}r^{n-1}\left(1+\frac{n-1}{n-2}\frac{m}{r^{n-2}}\right)+O(1).
\end{equation*}
It follows that
\begin{equation}\label{areaasym}
\int_{S_r}H_r\, d\mu_{\gamma_r}=(n-1)\omega_{n-1}r^{n-2}+O(r^{-1}).
\end{equation}

By the scaling property of the $\Lambda_+$-invariant, we have
\begin{align*}
\Lambda_+(S_r,\gamma_r)&=r^{n-2}\left(1+\frac{2}{n-2}\frac{m}{r^{n-2}}\right)^{\frac{n-2}{2}}\Lambda_+(\mathbf S^{n-1},\gamma_{\rm std}+\beta).
\end{align*}
Since $|\beta|_{C^2(\mathbf S^{n-1})}$ can be arbitrarily small as $r\rightarrow\infty$, for sufficiently large $r$, $\{\gamma(s)\}_{s\in[0,1]}$ is smooth monotonically increasing path of PSC metrics, 
where
$$\gamma(s)=\left(1+s|\beta|_{L^\infty(\mathbf S^{n-1})}\right)\gamma_{\rm std}+(1-s)\beta.$$
Due to Lemma \ref{c0monotonicity}, we have
\begin{align*}
\Lambda_+(\mathbf S^{n-1},\gamma_{\rm std}+\beta)\leq\left(1+|\beta|_{L^\infty(\mathbf S^{n-1})}\right)^{\frac{n-2}{2}}\Lambda_+(\mathbf{S}^{n-1},\gamma_{\rm std}).
\end{align*}
Similarly, 
\begin{align*}
\Lambda_+(\mathbf S^{n-1},\gamma_{\rm std}+h)\geq\left(1-|\beta|_{L^\infty(\mathbf S^{n-1})}\right)^{\frac{n-2}{2}}\Lambda_+(\mathbf{S}^{n-1},\gamma_{\rm std}).
\end{align*}
Since $\beta=O(r^{1-n})$, it follows that
\begin{align}\label{lambdaasm1}
\Lambda_+(S_r,\gamma_r)=\left(r^{n-2}+m\right)\Lambda_+(\mathbf{S}^{n-1},\gamma_{\rm std})+O(r^{-1}).
\end{align}

Combining  \eqref{areaasym} and \eqref{lambdaasm1} together, we see that as $r\rightarrow\infty$,
$$
m_{\rm BY}(S_r,\gamma_r, H_r)=m+O(r^{-1}).
$$
By the definition of $\Lambda_{+}$-invariant and nonnegativity of the scalar curvature of $(M^n,g)$, we see that $m_{\rm BY}(S_r,\gamma_r,H_r)\geq 0$. Hence, $m\geq 0$. 
 
Once we prove the nonnegativity of the ADM mass for any AF manifold with nonnegative scalar curvature, the rigidity part can be obtained by usual deformation arguments. For instance, suppose $(M^n,g)$ is an AF manifold with nonnegative scalar curvature and vanishing ADM mass. If $(M^n,g)$ is not scalar-flat, then by a suitable conformal deformation, we may get an AF metric with nonnegative scalar curvature and negative ADM mass, which leads to a contradiction. If $(M^n,g)$ is scalar-flat but not Ricci-flat, then we can run Ricci flow on $(M^n,g)$ to get an AF metric with strictly positive scalar curvature and zero ADM mass (see \cite{DM}). Thus, $(M^n,g)$ must be Ricci-flat. Finally, by Corollary 6.7 in \cite{BKN}, we know that $(M^n,g)$ is isometric to $\mathbf{R}^n$. This completes the proof of Theorem \ref{pmt1}.
\end{proof}

\begin{rem}\label{gbylimitas}
As a by-product, by combining \eqref{areaasym} and \eqref{lambdaasm1} together, for an AS $n$-manifold with mass $m$, we have either
$$\lim_{r\rightarrow\infty}m_{\rm BY}(S_r,\gamma_r,H_r)=\infty,$$
or
$$\lim_{r\rightarrow\infty}m_{\rm BY}(S_r,\gamma_r,H_r)=m,$$
where $S_r$ is the $r$-coordinate sphere in some compatible coordinates, $\gamma_r$ and $H_r$ are the induced metric and mean curvature of $S_r$ respectively. And it is not hard to see that second case holds if and only if $\Lambda_{+}\left(\mathbf{S}^{n-1}, \gamma_{\rm std}\right)=(n-1)\omega_{n-1}$. This leads to another equivalent statement of the positive mass theorem for AF $n$-manifolds: there exists an AS $n$-manifold of which the large sphere limit of the generalized Brown-York mass is finite.
\end{rem}

In fact, for an arbitrary convex hypersurface in $\mathbf{R}^n$, the conclusion of Theorem \ref{pmt1} is still true. More precisely, let $(\Sigma^{n-1},\gamma)$ be a closed convex hypersurface in $\mathbf{R}^{n}$ and $H_0$ be its mean curvature. Denote the total mean curvature by
$$h\left(\Sigma^{n-1},\gamma\right)=\int_{\Sigma^{n-1}}H_0\,d\mu_{\gamma}.$$
Then we have 
\begin{thm}\label{pmt2}
Let $(\Sigma^{n-1},\gamma)$ be a closed convex hypersurface in $\mathbf{R}^{n}$. Then $\Lambda_+(\Sigma^{n-1},\gamma)=h(\Sigma^{n-1},\gamma)$ if and only if PMT holds for AF $n$-manifolds.\end{thm}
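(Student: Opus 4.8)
The plan is to run the argument behind Theorem~\ref{pmt1}, replacing the round sphere $(\mathbf{S}^{n-1},\gamma_{\rm std})$ and its standard coordinate spheres by the convex hypersurface $\Sigma$ and its dilates. I assume for simplicity that $\Sigma$ is strictly convex, so that its Euclidean mean curvature $H_0$ is positive and, by the Gauss equation, $R_\gamma>0$; the general convex case is treated by approximation. Write $h=h(\Sigma^{n-1},\gamma)$, let $\Omega_\Sigma\subset\mathbf{R}^{n}$ be the convex body bounded by $\Sigma$, let $\iota\colon\Sigma\hookrightarrow\mathbf{R}^{n}$ be the given embedding, and let $g_E$ be the flat metric.

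\textbf{Necessity (PMT implies $\Lambda_+(\Sigma,\gamma)=h$).} The bound $\Lambda_+(\Sigma,\gamma)\ge h$ is immediate, since $(\Omega_\Sigma,g_E)$ is a NNSC fill-in of $(\Sigma,\gamma,H_0)$ with $H_0>0$. For the reverse bound, given any NNSC fill-in $(\Omega,g)$ of $(\Sigma,\gamma,H)$ with $H>0$, I would run the Shi--Tam quasi-spherical construction over the outer parallel foliation $\{\Sigma_t\}_{t\ge 0}$ of the exterior region of $\Sigma$ in $\mathbf{R}^{n}$ (each $\Sigma_t$ convex): solving the quasi-spherical equation with initial value $u(\cdot,0)=H_0/H$ produces an AF metric $\bar g=u^{2}\,dt^{2}+\gamma_t$ with $R_{\bar g}\equiv 0$ and ADM mass $\mathfrak m$, realizing $(\gamma,H)$ as the Bartnik data of $\Sigma_0$. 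Gluing $(\Omega,g)$ to this exterior along $(\Sigma,\gamma)$ yields a complete AF manifold, smooth with $R\ge 0$ off one corner, across which the induced metrics and mean curvatures match; Miao's mollification \cite{Miao02} together with PMT then gives $\mathfrak m\ge 0$. The Shi--Tam monotonicity along $\{\Sigma_t\}$ (cf.\ \cite{ST02}), whose value at $\Sigma_0$ is a positive multiple of $\int_\Sigma(H_0-H)\,d\mu_\gamma$ and whose limit at infinity is $\mathfrak m$, then gives $\int_\Sigma H\,d\mu_\gamma\le h$. Taking the supremum over all such fill-ins yields $\Lambda_+(\Sigma,\gamma)\le h$.

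\textbf{Sufficiency ($\Lambda_+(\Sigma,\gamma)=h$ implies PMT).} By the Schoen--Yau density reduction (cf.\ \cite{SY81}) and the deformation argument used in Theorem~\ref{pmt1}, it suffices to show $m\ge 0$ for every asymptotically Schwarzschild $(M,g)$ with $R_g\ge 0$ and mass $m$. I would fix an interior point $O$ of $\Omega_\Sigma$, put $\rho(y)=|\iota(y)-\iota(O)|$, $\psi=\rho^{2-n}$, and
$$\Theta=\int_\Sigma\Big(\rho^{2-n}H_0-(n-1)\,\rho^{1-n}\big\langle\tfrac{\iota(y)-\iota(O)}{\rho},\,\nu_\Sigma\big\rangle\Big)\,d\mu_\gamma.$$
As $O\to\Sigma$ while staying interior, the $\rho^{1-n}$--term forces $\Theta\to-\infty$ whereas $(\min_\Sigma\psi)\,h$ remains bounded, so I may fix $O$ with $\Theta<(\min_\Sigma\psi)\,h$. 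For large $\lambda$ consider the dilated hypersurfaces $\Sigma_\lambda=\lambda\big(\iota(\Sigma)-\iota(O)\big)$ inside the AS coordinate chart; expanding the AS metric gives the induced metric $\gamma_\lambda=\lambda^{2}\big[\big(1+\tfrac{2m}{n-2}\lambda^{2-n}\psi\big)\gamma+O(\lambda^{1-n})\big]$ and $\int_{\Sigma_\lambda}H_\lambda\,d\mu_{\gamma_\lambda}=\lambda^{n-2}h+m\Theta+o(1)$. The region of $M$ enclosed by $\Sigma_\lambda$ is a NNSC fill-in of $(\Sigma_\lambda,\gamma_\lambda,H_\lambda)$ with $H_\lambda>0$, hence $\int_{\Sigma_\lambda}H_\lambda\,d\mu_{\gamma_\lambda}\le\Lambda_+(\Sigma_\lambda,\gamma_\lambda)$. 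Now suppose $m<0$. Since $R_\gamma>0$, for large $\lambda$ the bracketed metric is PSC and is $\le\big(1+\tfrac{2m}{n-2}\lambda^{2-n}\min_\Sigma\psi+C\lambda^{1-n}\big)\gamma$, so Lemma~\ref{c0monotonicity} and the scaling property of $\Lambda_+$ give $\Lambda_+(\Sigma_\lambda,\gamma_\lambda)\le\lambda^{n-2}\Lambda_+(\Sigma,\gamma)+m(\min_\Sigma\psi)\Lambda_+(\Sigma,\gamma)+o(1)$. Combining this with the previous inequality, using $\Lambda_+(\Sigma,\gamma)=h$, and letting $\lambda\to\infty$ gives $m\big(\Theta-(\min_\Sigma\psi)h\big)\le 0$; since $\Theta-(\min_\Sigma\psi)h<0$ this forces $m\ge 0$, a contradiction. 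The rigidity statement follows as in Theorem~\ref{pmt1}.

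\textbf{Main obstacle.} The delicate step is the asymptotic evaluation of $\Lambda_+(\Sigma_\lambda,\gamma_\lambda)$. In Theorem~\ref{pmt1} the coordinate spheres are genuine homotheties of $(\mathbf{S}^{n-1},\gamma_{\rm std})$, so the scaling property of $\Lambda_+$ pins $\Lambda_+(\Sigma_\lambda,\gamma_\lambda)$ down exactly; for a non-round $\Sigma$ the conformal factor $1+\tfrac{2m}{n-2}\lambda^{2-n}\psi$ is non-constant, and one can only trap $\lambda^{-2}\gamma_\lambda$ between constant rescalings of $\gamma$, so Lemma~\ref{c0monotonicity} yields only a two-sided bound whose gap is of size $\sim\lambda^{2-n}\big(\max_\Sigma\psi-\min_\Sigma\psi\big)$. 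The point of placing the dilation centre $O$ near $\Sigma$ is precisely to make the total mean curvature correction $\Theta$ negative enough that this gap is harmless. Making this quantitative, and verifying that the asymptotically Schwarzschild remainder $b_{ij}$ contributes only $o(1)$ to $\int_{\Sigma_\lambda}H_\lambda\,d\mu_{\gamma_\lambda}$, is where I expect the bulk of the work.
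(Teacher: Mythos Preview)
Your necessity argument coincides with the paper's (``same as Theorem~\ref{pmt1}'' via the Shi--Tam monotonicity). Your sufficiency argument, however, follows a genuinely different route, and it appears to be correct.

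The paper does \emph{not} embed dilates of $\Sigma$ into an AS manifold and expand asymptotics. Instead it reduces to the sphere case: assuming $\Lambda_+(\mathbf{S}^{n-1},\gamma_{\rm std})>(n-1)\omega_{n-1}$, it takes a NNSC fill-in of the round sphere with excessive total mean curvature, caps it with a negative-mass quasi-spherical AF end, then applies Lohkamp's construction \cite{L} to obtain a metric on $M$ that is \emph{exactly Euclidean} outside a compact set $K$ yet has $R>0$ somewhere in $K$. Because the exterior is flat, a dilate $(\Sigma,\lambda^2\gamma)$ embeds isometrically into $M\setminus K$; a linear elliptic conformal deformation (solving $\Delta u=c_n^{-1}fu$ with $f$ supported where $R>0$) then strictly increases the boundary mean curvature while keeping $R\ge0$, giving $\Lambda_+(\Sigma,\lambda^2\gamma)>h(\Sigma,\lambda^2\gamma)$ and contradicting the hypothesis after rescaling.

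The trade-off: the paper's route sidesteps entirely the asymptotic analysis you flag as the ``main obstacle.'' By passing through Lohkamp's reduction, the ambient metric near the image of $\Sigma$ is literally $g_E$, so there is no non-constant conformal factor $1+\tfrac{2m}{n-2}\lambda^{2-n}\psi$, no need to choose the dilation centre $O$ to control $\Theta$, and no remainder $b_{ij}$ to track. Your approach, by contrast, is self-contained and avoids invoking Theorem~\ref{pmt1} and Lohkamp's hammock construction as black boxes; the price is the careful expansion of $\int_{\Sigma_\lambda}H_\lambda\,d\mu_{\gamma_\lambda}$ and the clever placement of $O$ near $\partial\Omega_\Sigma$ to force $\Theta<(\min_\Sigma\psi)h$. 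Both arguments ultimately rest on Lemma~\ref{c0monotonicity}; the paper uses it implicitly (through Theorem~\ref{pmt1}), you use it explicitly on the perturbed metrics $\lambda^{-2}\gamma_\lambda$.
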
 
\begin{proof}
The necessity part is the same as Theorem \ref{pmt1}. For the sufficiency part, by Theorem \ref{pmt1}, we only need to show that $\Lambda_+(\Sigma^{n-1},\gamma)=h(\Sigma^{n-1},\gamma)$ implies $\Lambda_+(\mathbf{S}^{n-1},\gamma_{\rm std})=(n-1)\omega_{n-1}$. We prove this by contradiction. Suppose $(\mathbf{S}^{n-1},\gamma_{\rm std},H)$ admits a NNSC fill-in $(\Omega,\tilde{g})$ for some smooth function $H>0$ on  $\mathbf{S}^{n-1}$ that satisfies
$$\int_{\mathbf{S}^{n-1}}H\,d\mu_{\gamma_{\rm std}}>(n-1)\omega_{n-1}.$$
As in \cite{ST02}, we can use the quasi-spherical metric to construct a scalar-flat AF end $(M_+,g_+)$ with inner boundary data $(\mathbf{S}^{n-1},\gamma_{\rm std},H)$ and $$m_{\rm ADM}\left(M_+,g_+\right)<0.$$ Next, we glue $(M_+,g_+)$ and $(\Omega,\tilde{g})$ together along their common boundary $(\mathbf{S}^{n-1},\gamma_{\rm std})$. After performing Miao's mollifying procedure \cite{Miao02} and a suitable conformal deformation, we can obtain a smooth AF manifold $(M,g)$ with $m_{\rm ADM}(M,g)<0$ and $R_{g}\geq0$. Then by the results in \cite{SY81}, there is a scalar-flat metric $g_1$ on $M$ with conformally flat asymptotics and $m_{\rm ADM}(M,g_1)<0$. It was observed by Lohkamp in \cite{L} that such $M$ admits a metric $g_2$ with the following properties: $R_{g_2}\geq0$; $g_2$ is Euclidean outside a compact set $K\subset M$; $R_{g_2}>0$ somewhere in $K$.  

Since $(M,g_2)$ is Euclidean outside $K$, we may find a constant $\lambda>0$ such that $(\Sigma^{n-1},\lambda^2\gamma)$ can be isometrically embedded into $(M\setminus K,g_2)$. Denote the image of this embedding by $\Sigma_0$ and the region enclosed by $\Sigma_0$ in $M$ by $K_0$. Obviously, $(K_0,g_2)$ is a NNSC fill-in of $(\Sigma^{n-1},\lambda^2\gamma,\lambda^{-1}H_0)$. Next we increase the mean curvature of $\Sigma_0$ but preserve the nonnegativity of the scalar curvature by a conformal deformation. Note that $R_{g_2}(x_0)>0$ for some $x_0\in\mathring{K_0}$.  
We can find a small $\rho>0$ such that $R_{g_2}>0$ in $B_{\rho}(x_0)\subset K_0$. Let $\eta$ be a smooth function compactly supported in $B_{\rho}(x_0)$ that satisfies
$0\leq\eta\leq 1$ and $\eta(x_0)=1$. Define
$f=\eta R_{g_2}$. Now consider the following equation
\begin{equation*}
\left\{
\begin{aligned}
\Delta_{g_2}u-c^{-1}_nfu&=0\quad\text{in}\ K_0,\\
u&=1\quad \text{on}\ \Sigma_0,
\end{aligned}
\right.
\end{equation*}
where $c_n=\frac{4(n-1)}{n-2}$. Since $f\geq 0$, above equation has a smooth solution $u$. By the maximum
principle, $0<u<1$ in $\mathring{K_0}$ and $\frac{\partial
u}{\partial\nu}\big |_{\Sigma_0}>0$, where $\nu$ is the outward unit normal with
respect to $g_2$.

Now, let $g_3=u^\frac{4}{n-2}g_2$. Then there holds
\begin{align*}
R_{g_3}&=u^{-\frac{n+2}{n-2}}\left(R_{g_2}u-c_n\Delta_{g_2}u\right)\\
&=u^{-\frac{4}{n-2}}(1-\eta)R_{g_2}\geq 0,
\end{align*}
and
\begin{equation}\label{increasemeancur}
H_{g_3}=\lambda^{-1}H_0+\frac{c_n}{2}\frac{\partial u}{\partial\nu}>\lambda^{-1}H_0,
\end{equation}
where $H_{g_3}$ is the mean curvature of $\Sigma_0$ in $(M,g_3)$ with respect to the outward direction. 
Inequlity \eqref{increasemeancur} yields
$$\Lambda_+(\Sigma^{n-1},\lambda^2\gamma)> h(\Sigma^{n-1},\lambda^2\gamma).$$
Rescaling back, we get $\Lambda_+(\Sigma^{n-1},\gamma)>h(\Sigma^{n-1},\gamma)$, which contradicts the assumption. This completes the proof.
\end{proof}
\begin{rem}
Recently, P. Miao told us that one can also prove Theorem \ref{pmt1} by combining the Lohkamp's reduction \cite{L} and the first variation of the total mean curvature \cite{MST}, which is in the same spirit of the proof of Theorem \ref{pmt2}.
\end{rem}

It is also interesting to see that Theorem \ref{pmt1} reveals a relationship between PMT for AH manifolds and PMT for AF manifolds. Namely,
\begin{thm}\label{ahafpmt}
If PMT holds for AH $n$-manifolds, then PMT also holds for AF $n$-manifolds.
\end{thm}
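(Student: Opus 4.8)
\textbf{Proof proposal for Theorem \ref{ahafpmt}.}

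The plan is to reduce the AF positive mass theorem in dimension $n$ to the AH positive mass theorem in dimension $n$ by exhibiting, from a hypothetical counterexample on the AF side, a counterexample on the AH side. By Theorem \ref{pmt1} it suffices to show that, assuming PMT for AH $n$-manifolds, one has $\Lambda_{+}\left(\mathbf{S}^{n-1},\gamma_{\rm std}\right)=(n-1)\omega_{n-1}$; the inequality $\geq$ is trivial (the unit ball in $\mathbf{R}^n$), so the content is the upper bound. Suppose for contradiction that some Bartnik data $\left(\mathbf{S}^{n-1},\gamma_{\rm std},H\right)$ with $H>0$ and $\int_{\mathbf{S}^{n-1}}H\,d\mu_{\gamma_{\rm std}}>(n-1)\omega_{n-1}$ admits a NNSC fill-in $(\Omega,\tilde g)$.

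First I would observe that a round sphere of radius $r$ with the standard metric is a coordinate sphere in the anti-de Sitter--Schwarzschild metric of scalar curvature $-n(n-1)$ and mass $m$ for an appropriate value of $m=m(r)$: as $r\to\infty$ this $m(r)$ tends to $+\infty$, and the induced mean curvature of that sphere in the AdS--Schwarzschild end, call it $H_m(r)$, satisfies $\int_{S_r}H_m\,d\mu \sim (n-1)\omega_{n-1}\sqrt{1+r^2}$ as $r\to\infty$ (a hyperbolic-space-like asymptotics), while for $m$ very negative the total mean curvature is pushed below $(n-1)\omega_{n-1}\sqrt{1+r^2}$. The key comparison is the delicate equality asserted in the paragraph after Theorem \ref{ahafpmt1}: if PMT holds for AH manifolds, then $\Lambda_{+,\,-1}\left(\mathbf{S}^{n-1},\lambda^2\gamma_{\rm std}\right)=(n-1)\omega_{n-1}\sqrt{1+\lambda^2}$ for all $\lambda>0$. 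Granting this, I would rescale the assumed fill-in: for a suitable large $\lambda$, $(\mathbf{S}^{n-1},\lambda^2\gamma_{\rm std},\lambda^{-1}H)$ still has total mean curvature $\lambda^{n-2}\int H\,d\mu > \lambda^{n-2}(n-1)\omega_{n-1}$, and I want to arrange this to exceed $(n-1)\omega_{n-1}\sqrt{1+\lambda^2}$ — but $\sqrt{1+\lambda^2}\approx\lambda$ dominates $\lambda^{n-2}$ only for $n=3$, so for $n\geq 4$ this naive rescaling fails and one must instead work near $\lambda\to 0^{+}$, using that $\Lambda_{+,\,-1}$ is monotone and that at small $\lambda$ the hyperbolic correction $\sqrt{1+\lambda^2}\to 1$. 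Concretely: run the quasi-spherical construction of \cite{ST02} adapted to scalar curvature $-n(n-1)$ (as in the Sketch of Proof following Theorem \ref{finiteinfmeancurpsc1}) to extend $(\mathbf{S}^{n-1},\gamma_{\rm std},H)$ outward into an asymptotically AdS--Schwarzschild end whose mass aspect is negative; glue this to $(\Omega,\tilde g)$, apply Miao's corner-mollification \cite{Miao02} and the conformal deformation of \cite{BQ} to produce a genuine smooth AH $n$-manifold with $R\geq -n(n-1)$ and negative mass; this contradicts PMT for AH manifolds.

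In more detail, the argument should be organized as: (i) fix the hypothetical NNSC fill-in $(\Omega,\tilde g)$ of $(\mathbf{S}^{n-1},\gamma_{\rm std},H)$ with excess total mean curvature; (ii) solve the prescribed-scalar-curvature quasi-spherical equation $\bar H_t\partial_t u = u^2\Delta_{\gamma_t}u+\tfrac12(f-R_{\gamma_t})u^3+\tfrac12(R_{\gamma_t}-R_{\bar g})u$ with $f=-n(n-1)$ on an exterior collar $\mathbf{S}^{n-1}\times[1,\infty)$ foliated by round spheres of hyperbolically growing radii, using the given $H$ as the initial mean-curvature datum, and check via the monotonicity computation in Lemma \ref{c0monotonicity} that the resulting total mean curvature of large slices, compared against the reference $\int_{S_r}H_m\,d\mu$, forces the limiting mass parameter $m$ to be strictly negative precisely because $\int_{\mathbf{S}^{n-1}}H\,d\mu_{\gamma_{\rm std}}>(n-1)\omega_{n-1}$; (iii) glue to $(\Omega,\tilde g)$, mollify the corner (the inner mean curvature exceeds the outer one by construction, since $\tilde g$ realizes exactly $H$ and the collar is built to have the same $H$ on the common boundary — here one may need an arbitrarily small inward rescaling to make the mean-curvature jump have the right sign, as in the proof of Lemma \ref{c0monotonicity}), and conformally deform following \cite{BQ} to restore smoothness while keeping $R\geq -n(n-1)$ and the mass negative; (iv) invoke PMT for AH $n$-manifolds to get a contradiction, hence $\Lambda_{+}\left(\mathbf{S}^{n-1},\gamma_{\rm std}\right)=(n-1)\omega_{n-1}$, and conclude by Theorem \ref{pmt1}.

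The main obstacle I anticipate is step (ii): establishing the sharp asymptotic relation between the total mean curvature of a large $\gamma_{\rm std}$-round slice inside a scalar-curvature-$(-n(n-1))$ quasi-spherical filling and the AdS--Schwarzschild mass, i.e. proving the claimed identity $\Lambda_{+,\,-1}(\mathbf{S}^{n-1},\lambda^2\gamma_{\rm std})=(n-1)\omega_{n-1}\sqrt{1+\lambda^2}$ in the regime where it is actually needed. This requires a genuinely hyperbolic analogue of the area/total-mean-curvature expansions \eqref{areaasym} and \eqref{lambdaasm1}, careful control of the solution $u$ to the quasi-spherical equation over an infinite collar with the non-compact hyperbolic geometry (bounds analogous to \eqref{quassphericaltwobounds} but now with $t$-dependent coefficients that do not degenerate at infinity), and the propagation-of-equality statement ``if it holds for some $\lambda_0$ then for all $\lambda\leq\lambda_0$'' which itself rests on the monotonicity Lemma \ref{c0monotonicity} applied to the hyperbolically-scaled family. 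The corner-gluing and conformal-deformation steps (iii) are technically routine given \cite{Miao02}, \cite{BQ} and \cite{ACG}, but one must be careful that the conformal factor does not destroy the asymptotics of the AdS--Schwarzschild end nor the negativity of the mass aspect function.
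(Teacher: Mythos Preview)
Your strategy is essentially the paper's own: reduce to showing $\Lambda_{+}\left(\mathbf{S}^{n-1},\gamma_{\rm std}\right)\leq (n-1)\omega_{n-1}$ via Theorem \ref{pmt1}, and obtain this from the AH positive mass theorem by building a quasi-spherical AH end outside a given fill-in, computing a monotone mass quantity, and letting a scaling parameter go to zero. Your steps (ii)--(iv) are exactly the content of Lemma \ref{AHPMTconfillin}.

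However, your rescaling discussion is tangled because you have copied the formula for $\Lambda_{+,\,-1}\left(\mathbf{S}^{n-1},\lambda^2\gamma_{\rm std}\right)$ incorrectly. The correct value (from Lemma \ref{AHPMTconfillin}, coming from the geodesic ball of radius $\arcsinh\lambda$ in $\mathbf{H}^n$) is
\[
\Lambda_{+,\,-1}\left(\mathbf{S}^{n-1},\lambda^2\gamma_{\rm std}\right)=(n-1)\,\omega_{n-1}\,\lambda^{n-2}\sqrt{1+\lambda^2},
\]
with the factor $\lambda^{n-2}$ that you dropped. Once you restore it, the rescaled fill-in $(\Omega,\lambda^2\tilde g)$ of $(\mathbf{S}^{n-1},\lambda^2\gamma_{\rm std},\lambda^{-1}H)$ has total mean curvature $\lambda^{n-2}\int H\,d\mu_{\gamma_{\rm std}}$, the $\lambda^{n-2}$ cancels on both sides, and the inequality you need becomes simply
\[
\int_{\mathbf{S}^{n-1}} H\,d\mu_{\gamma_{\rm std}} \;\leq\; (n-1)\,\omega_{n-1}\sqrt{1+\lambda^2},
\]
valid for every $\lambda>0$ and every dimension. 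Letting $\lambda\to 0^{+}$ gives the desired bound, with no dimension-dependent dichotomy. The paper phrases this same limit as $\Lambda_{+}\leq \Lambda_{+,\,\kappa}=(n-1)\omega_{n-1}\sqrt{1-\kappa}$ and sends $\kappa\to 0^{-}$, which is exactly your $\lambda\to 0^{+}$ after the scaling identity $\Lambda_{+,\,\kappa}(\mathbf{S}^{n-1},\gamma_{\rm std})=|\kappa|^{1-n/2}\Lambda_{+,\,-1}(\mathbf{S}^{n-1},|\kappa|\gamma_{\rm std})$. So the paragraph about $n=3$ versus $n\geq 4$ and large versus small $\lambda$ should be deleted; it stems entirely from the missing $\lambda^{n-2}$.

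Regarding your anticipated obstacle in step (ii): the paper handles this precisely, and more simply than you fear. One does not need area/total-mean-curvature expansions in the style of \eqref{areaasym}--\eqref{lambdaasm1}. Instead one defines directly
\[
m(r)=\sinh^{n-2}r\,\cosh^2 r\int_{\mathbf S^{n-1}}\bigl(1-u^{-1}\bigr)\,d\omega
\]
along the quasi-spherical collar, checks by a one-line computation using the quasi-spherical equation that $m'(r)\leq 0$, identifies $\lim_{r\to\infty}m(r)$ with the AH mass aspect integral via the expansion $u=1+e^{-nr}v+O(e^{-(n+1)r})$, and reads off $m(r_0)\geq 0$ as exactly the inequality $\int H\leq (n-1)\omega_{n-1}\lambda^{n-2}\sqrt{1+\lambda^2}$. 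No separate ``propagation of equality from $\lambda_0$ to $\lambda\leq\lambda_0$'' is needed; the argument works uniformly for all $\lambda>0$. Your concern about the corner sign in (iii) is also unnecessary: the quasi-spherical collar is built with initial datum $u(\cdot,r_0)=\bar H_0/H$, so the mean curvatures match exactly across the corner, and one invokes the corner PMT of \cite{BQ} directly without any inward rescaling.
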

Before prove this theorem, wet establish a lemma. 
\begin{lm}\label{AHPMTconfillin}
If PMT holds for AH $n$-manifolds, then for any $\lambda>0$,  
\begin{equation*}
\Lambda_{+,\,-1}\left(\mathbf{S}^{n-1},\lambda^2 \gamma_{\rm std}\right)=(n-1)\omega_{n-1}\lambda^{n-2}\sqrt{1+\lambda^2}.
\end{equation*}
 \end{lm}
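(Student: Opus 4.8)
\textbf{Proof proposal for Lemma \ref{AHPMTconfillin}.}

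The plan is to prove the two inequalities separately. For the lower bound $\Lambda_{+,\,-1}(\mathbf S^{n-1},\lambda^2\gamma_{\rm std})\geq (n-1)\omega_{n-1}\lambda^{n-2}\sqrt{1+\lambda^2}$, I would exhibit an explicit NNSC fill-in (in fact a fill-in with $R\geq -n(n-1)$) realizing, up to an arbitrarily small loss, this value of the total mean curvature. The natural candidate is the geodesic ball of radius $r$ in hyperbolic space $\mathbf H^n$ (with curvature $-1$): its boundary is a round sphere of intrinsic metric $\sinh^2 r\,\gamma_{\rm std}$ and constant mean curvature $(n-1)\coth r$, so choosing $r$ with $\sinh r=\lambda$ gives exactly the Bartnik data $(\mathbf S^{n-1},\lambda^2\gamma_{\rm std},(n-1)\coth r)$ with $(n-1)\coth r=(n-1)\sqrt{1+\lambda^2}/\lambda$, whence $\int_{\mathbf S^{n-1}}H\,d\mu=(n-1)\omega_{n-1}\lambda^{n-2}\sqrt{1+\lambda^2}$. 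This is an $R=-n(n-1)$ fill-in with $H>0$, so it directly witnesses the lower bound. (If one insists on $R\geq 0$ rather than $R\geq -n(n-1)$, the inequality $\Lambda_{+,\,\kappa}\geq\Lambda_+$ for $\kappa\leq 0$ noted after Definition \ref{fillininvariant} shows the $R\geq -n(n-1)$ case is the one relevant here, as is consistent with the subscript $-1$ in the statement.)

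For the upper bound, I would argue by contradiction in the spirit of the proof of Theorem \ref{pmt1} and Theorem \ref{pmt2}: suppose $(\mathbf S^{n-1},\lambda^2\gamma_{\rm std},H)$ admits a fill-in $(\Omega,\tilde g)$ with $R_{\tilde g}\geq -n(n-1)$, $H>0$, and $\int_{\mathbf S^{n-1}}H\,d\mu_{\lambda^2\gamma_{\rm std}}>(n-1)\omega_{n-1}\lambda^{n-2}\sqrt{1+\lambda^2}$. The goal is to build from $(\Omega,\tilde g)$ an AH manifold with $R\geq -n(n-1)$ but negative mass aspect integral $\int_{\mathbf S^{n-1}}\tr_{\gamma_{\rm std}}h<0$, contradicting PMT for AH manifolds. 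To do this I would attach to $\partial\Omega$ a hyperbolic-type AH exterior end, constructed via a quasi-spherical (parabolic) equation of the same flavor as \eqref{PSCExtensionQSeq} but now adapted to ambient scalar curvature $-n(n-1)$ and a hyperbolic background $dr^2+\sinh^2 r\,\gamma_{\rm std}$; the boundary condition at the gluing sphere is chosen so that the prescribed mean curvature of the end matches $H$ from inside. Because $\int H\,d\mu$ strictly exceeds the value attained by the round hyperbolic sphere of the same induced metric, the monotonicity of the total mean curvature along the quasi-spherical foliation (as in Lemma \ref{c0monotonicity}, adapted to the $\kappa=-1$ setting) forces the resulting AH end to have strictly negative mass. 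Then, after Miao's corner-smoothing \cite{Miao02} and a conformal deformation preserving the asymptotics (as in \cite{BQ}, used already in the sketch after Theorem \ref{finiteinfmeancurpsc1}), one obtains a genuine smooth AH manifold with $R\geq -n(n-1)$ and negative mass — contradiction. A scaling/monotonicity step using Lemma \ref{c0monotonicity} handles the passage between different values of $\lambda$ if the construction is most naturally carried out for one normalization.

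The main obstacle I expect is the construction and solvability of the quasi-spherical end equation in the hyperbolic-ambient setting together with the precise asymptotic analysis needed to read off the sign of the AH mass aspect function $\int_{\mathbf S^{n-1}}\tr_{\gamma_{\rm std}}h$ from the total mean curvature deficit at the gluing sphere. Unlike the AF case in Theorem \ref{pmt1}, where the relevant asymptotics of $\int_{S_r}H\,d\mu$ and $\mathrm{Area}(S_r)$ are classical, here one must verify that the quasi-spherical metric $u^2dr^2+\sinh^2 r\,\gamma_r$ is asymptotically hyperbolic in the sense of Definition \ref{AHdef} (in particular that $\gamma_r$ has the required expansion $\gamma_{\rm std}+\tfrac{\rho^n}{n}h+O(\rho^{n+1})$ after the change of variable $\rho=2e^{-r}$, say), and that the coefficient $h$ integrates to a negative number precisely when the total mean curvature exceeds the threshold. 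This requires a careful long-time existence and decay estimate for the parabolic equation, analogous to Claim \ref{quassphericalestimate} but with the extra exponential factors coming from $\sinh r$; the maximum-principle comparison with suitable ODE sub/supersolutions should still work, but bookkeeping the decay rates to the order needed for the mass expansion is the delicate point. Everything else — the gluing, Miao's smoothing, the conformal fix, and the final invocation of the AH positive mass theorem — is by now standard and parallels the arguments already used in Section \ref{section2} and in the proof of Theorem \ref{pmt2}.
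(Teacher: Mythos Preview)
Your proposal is correct and follows essentially the same approach as the paper: the lower bound via the hyperbolic geodesic ball, and the upper bound by attaching a quasi-spherical AH end $g=u^2dr^2+\sinh^2 r\,\gamma_{\rm std}$ to an arbitrary fill-in, analyzing the asymptotics of $u$ to read off the mass, and invoking the AH positive mass theorem. The only minor differences are that the paper argues directly rather than by contradiction, carries out an explicit monotonicity computation for the hyperbolic Brown--York type quantity $m(r)=\int_{\Sigma_r}(H^0_r-H_r)\cosh r\,d\mu$ (rather than adapting Lemma~\ref{c0monotonicity}), and invokes Bonini--Qing's corner version of the AH PMT \cite{BQ} directly instead of smoothing the corners by hand.
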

\begin{proof}
Since $(\mathbf{S}^{n-1},\lambda^2 \gamma_{\rm std})$ is the boundary of a geodesic ball of radius $\arcsinh\lambda$ in the hyperbolic space $\mathbf {H}^n$, we certainly have
\begin{equation}\label{lambda2}
\Lambda_{+,\,-1}\left(\mathbf{S}^{n-1},\lambda^2 \gamma_{\rm std}\right)\geq (n-1)\omega_{n-1}\lambda^{n-2}\sqrt{1+\lambda^2}.
\end{equation}
Next we prove that PMT for AH $n$-manifolds implies the reverse inequality. Suppose for some function $H>0$ on $\mathbf{S}^{n-1}$, $(\mathbf{S}^{n-1},\lambda^2\gamma_{\rm std},H)$ admits a fill-in $(\Omega,\tilde{g})$ with $R_{\tilde g}\geq-n(n-1)$. Set $r_0=\arcsinh\lambda$. Now consider the quasi-spherical metric equation on $\mathbf{S}^{n-1}\times[r_0,\infty)$: 
\begin{equation}\label{Eq: quasi spherical equation 1}
\left\{
\begin{aligned}
\sinh(2r)\frac{\partial u}{\partial r}&=\frac{2}{n-1}u^2\Delta_{\gamma_{\rm std}}u-\left(n\sinh^2r+n-2\right)\left(u^3-u\right)\\
\quad u(\cdot,r_0)&=\frac{(n-1)\coth r_0}{H}.
\end{aligned}
\right.
\end{equation}
By similar analysis as in the proof of Theorem 2.1 in \cite{WY}, equation \eqref{Eq: quasi spherical equation 1} has a positive solution on the whole $[r_0,\infty)$, and the solution satisfies 
\begin{equation}\label{expansion1}
u(\omega,r)=1+e^{-nr}v(\omega)+O\left(e^{-(n+1)r}\right) \quad \text{as \,$r\rightarrow \infty$},
\end{equation}
where $\omega$ is the coordinates on $\mathbf{S}^{n-1}$ and $v$ is a smooth function on $\mathbf{S}^{n-1}$. The standard metic on $\mathbf H^n$ can be written as
$g_0=dr^2+\sinh^2r\gamma_{\rm std}$. On $\mathbf{S}^{n-1}\times[r_0,\infty)$, set $g=u^2dr^2+\sinh^2r\gamma_{\rm std}$. Then by Claim \ref{quassphericalscalar}, $R_{g}\equiv-n(n-1)$. Let $\Sigma_r$ be the $r$-slice and $\gamma_r=\sinh^2r\gamma_{\rm std}$. Denote the induced mean curvature of $\Sigma_r$ from $g_0$ and $g$ by $H^0_r$ and $H_r$ respectively. Set $$ m(r)=\frac{1}{(n-1)\omega_{n-1}}\int_{\Sigma_r}\left(H^0_r-H_r\right)\cosh r\,d\mu_{\gamma_r}.$$
It is not hard to see
\begin{equation}\label{mass1}
m(r)=\sinh^{n-2}r\cosh^2 r\int_{\mathbf S^{n-1}}\left(1-u^{-1}(\omega,r)\right) d\omega.
\end{equation}
where $d\omega$ is the volume element of the standard metric. 
We have
\begin{align*}
 m'(r)=&\sinh^{n-3}r\cosh r\left(n-2+n\sinh^2r\right)\int_{\mathbf S^{n-1}}\left(1-u^{-1}\right) d\omega\\
&+\sinh^{n-2}r\cosh^2 r\int_{\mathbf S^{n-1}}u^{-2}\frac{\partial u}{\partial r}\, d\omega\\
=&\sinh^{n-3}r\cosh r\left(n-2+n\sinh^2r\right)\int_{\mathbf S^{n-1}}\left(1-u^{-1}\right) d\omega\\
&+\frac{1}{n-1}\sinh^{n-3}r\cosh r\int_{\mathbf S^{n-1}}\Delta_{\gamma_{\rm std}}u\,d\omega\\
&+\frac{1}{2}\sinh^{n-3}r\cosh r\int_{\mathbf S^{n-1}}\left(n-2+n\sinh^2r\right)\left(u^{-1}-u\right)d\omega\\
=&-\frac{1}{2}\sinh^{n-3}r\cosh r\int_{\mathbf S^{n-1}}u^{-1}\left(u-1\right)^2\,d\omega\leq 0.
\end{align*}
So $ m(r)$ monotonically decreases.

Form \eqref{expansion1} and \eqref{mass1}, we see that
\begin{equation}\label{limitmass}
\lim_{r\rightarrow\infty}m(r)=\frac{1}{2^n}\int_{\mathbf{S}^{n-1}}v\,d\omega.
\end{equation}
Let $\rho=\log \frac{e^r+1}{e^r-1}$.
Then in a coordinate system near infinity,
\begin{equation*}
\begin{split}
g&=u^2dr^2+\sinh^2r\gamma_{\rm std}\\
&=u^2 \sinh^{-2}\rho\left(d\rho^2+ u^{-2}\gamma_{\rm std}\right).
\end{split}
\end{equation*}

Combining \eqref{expansion1} and Lemma 6.5 in \cite{BQ}, we see that there is a geodesic defining function $s$ for $g$ such that
$$
g=\sinh ^{-2}\hat{\rho}\left(d\hat{\rho}^{2}+\gamma_{\rm std}+\frac{2^{1-n}}{n}\hat{\rho}^{n} v\gamma_{\rm std}+O\left(\hat{\rho}^{n+1}\right)\right),$$
where
$$\hat\rho=\log\frac{1+s}{1-s}.$$
Then $(\mathbf S^{n-1}\times[r_0, \infty),g)$ is an AH end. Glue $(\Omega, \tilde g)$ and $(\mathbf S^{n-1}\times[r_0, \infty),g)$ together along their common boundary $(\mathbf{S}^{n-1},\lambda^2\gamma_{\rm std})$. The resulting manifold is AH with corners; its scalar curvature is at least $-n(n-1)$ away from the corners; and mean curvatures from the two sides of corners are equal. From Theorem 1.1 and its proof in \cite{BQ}, we see if PMT holds for smooth AH $n$-manifolds, then it also holds for AH $n$-manifolds with corners that have matching mean curvatures from two sides. Therefore, we have
\begin{equation}\label{massaspctintegral}
\int_{\mathbf{S}^{n-1}}v\,d\omega\geq 0.
\end{equation}

Together with the monotonicity of $m(r)$ and \eqref{limitmass}, \eqref{massaspctintegral} implies $m(r_0)\geq 0$, namely
\begin{equation*}
\int_{\mathbf S^{n-1}}H\,d\mu_{\lambda^2\gamma_{\rm std}}\leq (n-1)\omega_{n-1}\lambda^{n-2}\sqrt{1+\lambda^2}.
\end{equation*}
Since $(\Omega,\tilde{g})$ is an arbitrary fill-in of $(\mathbf{S}^{n-1},\lambda^2\gamma_{\rm std},H)$ for an arbitrary function $H>0$ and $R_{\tilde g}\geq-n(n-1)$, by definition we have 
\begin{equation*}
\Lambda_{+,\,-1}\left(\mathbf{S}^{n-1},\lambda^2 \gamma_{\rm std}\right)\leq (n-1)\omega_{n-1}\lambda^{n-2}\sqrt{1+\lambda^2}.
\end{equation*}
This completes the proof.
\end{proof}
Now we prove Theorem \ref{ahafpmt}. 
\begin{proof}[Proof of Theorem \ref{ahafpmt}]
By the scaling property of $\Lambda_{+, \,\kappa}$-invariant, for any $\kappa<0$,
$$\Lambda_{+, \,\kappa}\left(\mathbf{S}^{n-1},\gamma_{\rm std}\right)=|\kappa|^{1-\frac{n}{2}}\Lambda_{+,\,-1}\left(\mathbf{S}^{n-1},|\kappa|\gamma_{\rm std}\right).$$	
Due to Lemma \ref{AHPMTconfillin}, 
\begin{equation*}
\Lambda_{+,\,-1}\left(\mathbf{S}^{n-1},|\kappa|\gamma_{\rm std}\right)=(n-1)\omega_{n-1}|\kappa|^{\frac{n}{2}-1}\sqrt{1-\kappa}.
\end{equation*}
So for all $\kappa<0$, there holds
\begin{equation*}
\Lambda_{+, \,\kappa}\left(\mathbf{S}^{n-1},\gamma_{\rm std}\right)=(n-1)\omega_{n-1}\sqrt{1-\kappa}.
\end{equation*}
By definition, $\Lambda_+(\mathbf{S}^{n-1}, \gamma_{\rm std})\leq \Lambda_{+, \,\kappa}\left(\mathbf{S}^{n-1},\gamma_{\rm std}\right)$. 
Hence, 
\begin{equation}\label{asymptotic}
\Lambda_+(\mathbf{S}^{n-1}, \gamma_{\rm std})\leq (n-1)\omega_{n-1}\sqrt{1-\kappa}.
\end{equation}
Letting $\kappa\rightarrow 0$ in \eqref{asymptotic}, we get
$$
\Lambda_+(\mathbf{S}^{n-1}, \gamma_{\rm std})\leq (n-1)\omega_{n-1}.
$$
Since $(\mathbf{S}^{n-1}, \gamma_{\rm std})$ is the boundary of the unit ball in $\mathbf{R}^n$, 
$$
\Lambda_+(\mathbf{S}^{n-1}, \gamma_{\rm std})\geq (n-1)\omega_{n-1}.
$$
Thus, we finally obtain
$$
\Lambda_+(\mathbf{S}^{n-1}, \gamma_{\rm std})= (n-1)\omega_{n-1}.
$$
By Theorem \ref{pmt1}, we see that PMT holds for AF manifolds. Thus, we completes the proof of Theorem \ref{ahafpmt}.
\end{proof}

%%-----------------------------------------------------------------------------------------------------------------------------------------------------------------------------------------------------------------------------------------------------------------------------------

\section{Total mean curvature estimate for spin fill-ins of spheres}\label{section4}
In this section, we consider the upper bound for the boundary total mean curvature of a compact spin manifold with NNSC. We say $\left( \Omega^n, g\right)$ is a spin fill-in for Bartnik data $\left(\Sigma^{n-1}, \gamma, H\right)$ if $\Omega^n$ is a spin manifold and $\left( \Omega^n, g\right)$ is a fill-in of $\left(\Sigma^{n-1}, \gamma, H\right)$.

\begin{thm}\label{Gconjec1}
For $n \geq 3$, let $\gamma$ be a smooth Riemannian metric on $\mathbf{S}^{n-1}$. Then  there is a constant $h_0=h_0(n,\gamma)$ such that if $H$ is a smooth positive function on $\mathbf S^{n-1}$ and $\left(\mathbf{S}^{n-1}, \gamma, H\right)$ admits a spin NNSC fill-in, then
$$ \int_{\mathbf{S}^{n-1}} H\, d \mu_{\gamma}\leq h_{0}.
$$
\end{thm}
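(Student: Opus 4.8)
The plan is to use the Witten-type spinorial argument, comparing a spin NNSC fill-in of $(\mathbf{S}^{n-1},\gamma,H)$ with a suitable reference manifold obtained by gluing a large Euclidean region outside the fill-in. First I would fix a background: since $\gamma$ is an arbitrary metric on $\mathbf{S}^{n-1}$, choose (as in the proof of Theorem~\ref{finiteinfmeancurpsc1}) an embedding $F\colon\mathbf{S}^{n-1}\to\mathbf{R}^n$ and a scaling $\lambda_0$ so that $F_{\lambda_0}^*(g_E)>\gamma$; apply the PSC-cobordism Lemma~\ref{PSCcobordismlm} to produce a collar $(\mathbf{S}^{n-1}\times[0,1],\check g)$ with $R_{\check g}>0$, with boundary data $(\mathbf{S}^{n-1},\gamma,H_0)$ on one side and $(\mathbf{S}^{n-1},F_{\lambda_0}^*(g_E),H_1)$ with $H_1>\widetilde H$ on the other, where $\widetilde H$ is the Euclidean mean curvature of $F_{\lambda_0}(\mathbf{S}^{n-1})$. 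Given a spin NNSC fill-in $(\Omega,g)$ of $(\mathbf{S}^{n-1},\gamma,H)$ with $H>0$, glue $(\Omega,g)$, then $(\mathbf{S}^{n-1}\times[0,1],\check g)$, then the exterior Euclidean region $(\widetilde\Omega,g_E)$; the resulting $(M,\hat g)$ is a complete AF spin manifold with a flat end, $R_{\hat g}\ge 0$ away from two corner hypersurfaces, and at each corner the mean curvature jumps in the favorable direction (inner side strictly larger), so that after Miao's mollification $(M,\hat g)$ is a genuine smooth AF spin manifold with $R\ge 0$.

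The key step is then to extract a bound on $\int_{\mathbf{S}^{n-1}}H\,d\mu_\gamma$ from the spinorial positive mass / Shi--Tam type inequality on $(M,\hat g)$. I would use the boundary term in Witten's identity: integrating the Lichnerowicz--Weitzenböck formula on the piece of $M$ bounded by $\mathbf{S}^{n-1}=\partial\Omega$ and a large coordinate sphere $S_R$ in the flat end, with a harmonic spinor asymptotic to a constant spinor at infinity, yields
\begin{equation*}
\frac{1}{4}\int_{M}R_{\hat g}|\psi|^2 + \int_{M}|\nabla\psi|^2
= \text{(ADM term at }S_R\text{)} - \text{(boundary term at }\partial\Omega\text{)}.
\end{equation*}
The $S_R$ term converges to a multiple of $m_{\mathrm{ADM}}(M,\hat g)$, which is finite (the end is exactly flat, so this mass is simply that of the glued construction, controlled purely by $(\Sigma,\gamma)$ and the chosen collar). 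The boundary term at $\partial\Omega$, for a spinor chosen to be parallel in the normal direction along the boundary, controls $\int_{\mathbf{S}^{n-1}}\big(H_{\hat g}-\text{(something intrinsic to }\gamma)\big)|\psi|^2$, hence bounds $\int_{\mathbf{S}^{n-1}}H\,d\mu_\gamma$ from above since $R_{\hat g}\ge0$ and $\int_M|\nabla\psi|^2\ge0$. Concretely, following the spin comparison method (as in Witten, and the spin half of Shi--Tam and of \cite{ST02}), one gets $\int_{\mathbf{S}^{n-1}} H\,|\psi|^2\,d\mu_\gamma \le C(n,\gamma)$ uniformly over all spin NNSC fill-ins, provided the restricted spinor $\psi|_{\mathbf{S}^{n-1}}$ can be taken with $|\psi|$ bounded below; one fixes this by solving the boundary-value problem for the Dirac operator with the chiral (APS-type or local elliptic) boundary condition adapted to the hypersurface, and normalizing.

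The main obstacle I anticipate is handling the topology and spin structure across the gluing, and making the boundary term manifestly monotone in $H$ with the right sign. Unlike the pure PMT argument in Theorem~\ref{finiteinfmeancurpsc1}, here one does not want a contradiction but a quantitative bound, so one must track the dependence of the constant: the ADM term of $(M,\hat g)$ must be shown to depend only on $(n,\gamma)$ — which it does, because the entire exterior (collar plus Euclidean end) is built solely from $(\Sigma,\gamma)$ and is independent of the fill-in $(\Omega,g)$ — and the boundary integrand must be shown to dominate $c_n H$ for a dimensional constant $c_n>0$ after choosing the test spinor appropriately. Establishing the correct boundary Bochner term, in the form $\int_{\partial\Omega}(H - \operatorname{tr}_\gamma \text{(Euclidean-type comparison)})|\psi|^2$ analogous to the Shi--Tam quasi-spherical identity but now done spinorially, and verifying that a normalized harmonic spinor with controlled $|\psi|$ exists on the fill-in with these boundary conditions, is the technical heart; the spin hypothesis is exactly what makes this Dirac-operator approach available, explaining why the theorem is stated for spin fill-ins and for boundary $\mathbf{S}^{n-1}$ rather than a general null-cobordant $\Sigma$.
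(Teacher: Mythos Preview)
Your approach has a genuine gap at the inner corner. When you glue the fill-in $(\Omega,g)$ to the PSC-cobordism collar produced by Lemma~\ref{PSCcobordismlm}, the mean-curvature condition needed for Miao's mollification is \emph{not} automatically satisfied. In that lemma the mean curvature $h_0$ of the collar at $\Sigma_0$ with respect to its own outward normal is very negative (namely $-u^{-1}\bar H_0$ with $u=\varepsilon$ small); equivalently, with respect to the outgoing normal the collar side has large positive mean curvature $-h_0$. Miao's corner condition requires $H\geq -h_0$ pointwise, which is precisely the hypothesis $H>C=\max(-h_0)$ used in Theorem~\ref{finiteinfmeancurpsc1} to derive a contradiction, not something you may assume here. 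For an arbitrary positive $H$ the jump goes the wrong way, the glued manifold does not have distributionally nonnegative scalar curvature, and neither the PMT nor a spinorial identity yields anything. Your Witten boundary-term sketch also stumbles for a second reason: once the manifold is closed up, $\partial\Omega$ is an interior hypersurface, and integrating the Lichnerowicz formula on the exterior piece alone gives a boundary term depending on the full spinor data there, not just $H$; producing a harmonic spinor on $\Omega$ with $|\psi|$ bounded below on $\partial\Omega$, uniformly over all fill-ins, is itself a nontrivial problem you have not addressed.

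The paper avoids the corner obstruction by building the collar so that its mean curvature at $\Sigma_0$ matches $H$ exactly: one solves a quasi-spherical equation on $\mathbf{S}^{n-1}\times[0,1]$ over the monotone path $\gamma_t=(1-t)\gamma+t\lambda^2\gamma_{\rm std}$ with initial data $u(\cdot,0)=\bar H_0/H$, so the inner gluing is seamless. The price is that one cannot force PSC in the collar---global existence of $u$ requires the coefficient of $u^3$ to be nonpositive---so one prescribes scalar curvature equal to the constant $K=\min_t R_{\gamma_t}\leq 0$ instead. A second-variation computation then shows $\int_{\Sigma_t}H_t\,d\mu_{\gamma_t}$ is strictly increasing, hence $\int_{\mathbf{S}^{n-1}}H\,d\mu_\gamma<\int_{\Sigma_1}H_1\,d\mu_{\lambda^2\gamma_{\rm std}}$. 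Because the collar has scalar curvature $K$ rather than $0$, the completion is by an asymptotically \emph{hyperbolic} end (after rescaling so that $R\geq -n(n-1)$), and one invokes the spin AH positive mass theorem via \cite{BQ} as in Lemma~\ref{AHPMTconfillin} to bound $\int_{\Sigma_1}H_1$ by $(n-1)\omega_{n-1}\lambda^{n-2}\sqrt{1-K\lambda^2/(n(n-1))}$. The spin hypothesis enters only through this AH PMT, not through a boundary Bochner identity on the fill-in itself.
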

\begin{proof}
There is a constant $\lambda>0$ such that $\lambda^2\gamma_{\rm std}>\gamma$. 
Let $\gamma_t= (1-t)\gamma+t\lambda^2\gamma_{\rm std}$ for  $t$ in $[0,1]$. Then $\gamma_t$ strictly monotonically increases. On $\mathbf{S}^{n-1}\times [0,1]$, define $\bar g=dt^2+\gamma_t$. Denote $\mathbf{S}^{n-1}\times\{t\}$ by $\Sigma_t$. Let $\bar A_t$ and $\bar H_t$ be the second fundamental form and mean curvature of $\Sigma_t$ induced from $\bar g$ with respect to the $\partial_t$-direction. Since $\gamma_t$ strictly monotonically increases, $\bar A_t>0$. It follows that
 $\bar H_t>0$ and $\bar H_t^2-\|\bar A_t\|^2>0$. Set $K=\min_{\mathbf{S}^{n-1}\times [0,1]}R_{\gamma_t}$. We see that $K$ depends only on $n$ and $\gamma$. Without loss of generality, we assume $K<0$. Suppose $(\Omega,\tilde{g})$ is a spin NNSC fill-in of $(\mathbf{S}^{n-1},\gamma,H)$. Now consider the quasi-spherical metric equation $\mathbf{S}^{n-1}\times [0,1]$: 
 \begin{equation}\label{Eq: quasi spherical equation 4}
\left\{
\begin{aligned}
\bar H_t\frac{\partial u}{\partial t}&=u^2\Delta_{\gamma_t}u+\frac{1}{2}\left(K-R_{\gamma_t}\right)u^3+\frac{1}{2}\left(R_{\gamma_t}-R_{\bar g}\right)u\\
\quad u(\cdot,0)&=\frac{\bar H_0}{H}>0.
\end{aligned}
\right.
\end{equation}
Since the coefficient of $u^3$ is $K-R_{\gamma_t}\leq 0$, after a similar analysis as for equation \eqref{Eq: quasi spherical equation 2}, we can prove above equation has a positive solution on the whole $[0,1]$. 

On $\mathbf{S}^{n-1} \times [0,1]$, let $g=u^2dt^2+\gamma_t$. By Claim \ref{quassphericalscalar}, we see $R_g\equiv K$. Let $A_t$ and $H_t$ denote the second fundamental form and the mean curvature of $\Sigma_t$ induced from metric $g$. It is not hard to see
\begin{equation}\label{meancurrelation}
A_t=u^{-1}\bar A_t,\qquad H_t=u^{-1}\bar H_t.
\end{equation}
By the second variation formula, Gauss equation and relation \eqref{meancurrelation}, 
\begin{align*}
\frac{d}{dt}\int_{\Sigma_t}H_t\, d\mu_{\gamma_t}&=\int_{\Sigma_t}\left(H_t^2-\|A_t\|^2-\ric(\nu,\nu)\right)u\,d\mu_{\gamma_t}\\
&=\frac{1}{2}\int_{\Sigma_t}\left(\bar H_t^2-\|\bar A_t\|^2\right)u^{-1}\,d\mu_{\gamma_t}+\frac{1}{2}\int_{\Sigma_t}\left(R_{\gamma_t}-K\right)u\,d\mu_{\gamma_t},
\end{align*}
where $\nu=u^{-1}\partial_t$ is the unit normal to $\Sigma_t$. Since $\bar H_t^2>\|\bar A_t\|^2$ and $R_{\gamma_t}\geq K$, $\frac{d}{dt}\int_{\Sigma_t}H_t\,d\mu_{\gamma_t}>0$.
It follows that
\begin{equation}\label{finitespinmonotonicity}
\int_{\mathbf S^{n-1}}H\,d\mu_{\gamma}=\int_{\Sigma_0}H_0\, d\mu_{\gamma_0}<\int_{\Sigma_1}H_1\,d\mu_{\gamma_1}.
\end{equation}

Glue $(\Omega,\tilde g)$ and $(\mathbf S^{n-1}\times [0,1],g)$ together along their common boundary $(\mathbf S^{n-1},\gamma)$ and denote the new manifold by $(\Omega',g')$. Then $(\Omega',g')$ is a spin fill-in of $(\mathbf S^{n-1},\lambda^2\gamma_{\rm std},H_1)$with corners. It is not hard to see that $g'$ is Lipschitz across the corners and the mean curvatures from two sides of the corners match. On $\Omega'$ away from the corners, $R_{g'}\geq K$. 

We can construct an AH end with constant scalar curvature $-n(n-1)$ and inner boundary data $\big(\mathbf S^{n-1},-\frac{K\lambda^2}{n(n-1)}\gamma_{\rm std},\sqrt{-\frac{n(n-1)}{K}}H_1\big)$ as in the proof of Lemma \ref{AHPMTconfillin}. Then we glue $\big(\Omega',-\frac{K\lambda^2}{n(n-1)}g'\big)$ to this AH end to get a complete spin AH manifold with corners. This manifold has scalar curvature as least $-n(n-1)$ away from the corners; and the mean curvatures on the two sides of the corners are equal. Note that the positive mass theorem holds in this setting \cite{BQ}. Hence, by a very similar argument as Lemma \ref{AHPMTconfillin} and a scaling we get
\begin{equation*}
\int_{\Sigma_1}H_1\,d\mu_{\lambda^2\gamma_{\rm std}}\leq (n-1)\omega_{n-1}\lambda^{n-2}\sqrt{1-\frac{K\lambda^2}{n(n-1)}}. 
\end{equation*}
It follows from \eqref{finitespinmonotonicity} that
\begin{equation*}
\int_{\mathbf{S}^{n-1}} H\, d \mu_{\gamma}<(n-1)\omega_{n-1}\lambda^{n-2}\sqrt{1-\frac{K\lambda^2}{n(n-1)}}. 
\end{equation*}
This completes the proof. 
\end{proof}

\begin{rem}
When the assumption $R\geq 0$ is replaced by $R\geq\sigma$ for any constant $\sigma$, the conclusion of Theorem \ref{Gconjec1} still holds, and the argument of the proof works too. \end{rem}

\begin{rem}\label{hcd}
Due to Theorem 1 in \cite{EHLS} and Theorem 1.4 in \cite{CD}, Theorem \ref{Gconjec1} holds without the spin assumption for $3\leq n\leq7$.
\end{rem}

\section {Estimates for $\Lambda_+(\mathbf{S}^2, \gamma)$}\label{section5}

In this section, we investigate some properties of $\Lambda_+$-invariant for $2$-spheres with nonnegative Gauss curvature.  

We first give upper and lower bounds estimates for $\Lambda_+(\mathbf{S}^2, \gamma)$ in terms of the diameter. 
\begin{pro}\label{2dboundedness}
Suppose $\gamma$ is a smooth metric on $\mathbf{S}^2$ with Gauss curvature $K_\gamma\geq 0$. Then 
\begin{equation}\label{totalmeancurradius}
2\diam\left(\mathbf{S}^2,\gamma\right)<\Lambda_+\left(\mathbf{S}^2,\gamma\right)<12\pi\diam\left(\mathbf{S}^2,\gamma\right).
\end{equation}
\end{pro}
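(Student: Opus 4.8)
The strategy is to bound $\Lambda_+(\mathbf{S}^2,\gamma)$ both above and below by constructing explicit fill-ins (for the lower bound) and by exploiting a monotone quantity along a foliation together with the Gauss--Bonnet theorem (for the upper bound). For the lower bound, I would produce a NNSC fill-in of $(\mathbf{S}^2,\gamma,H)$ with $H>0$ whose total mean curvature is close to $2\diam(\mathbf{S}^2,\gamma)$: take a long thin ``cigar'' — roughly $\gamma\times[0,L]$ capped off smoothly — where $L$ is slightly less than $\diam(\mathbf{S}^2,\gamma)$, and check that the metric can be arranged to have nonnegative scalar curvature (using $K_\gamma\geq 0$, so the cross-sections contribute nonnegatively) while the two end caps each contribute total mean curvature close to $\int_{\mathbf{S}^2}H_{\text{cap}}\,d\mu$; alternatively, and more cleanly, embed a geodesic segment realizing the diameter and build a tubular-type fill-in around it. The factor $2$ should emerge because a long tube has two ends, each of which can be made to have large mean curvature integral, while a careful accounting via the first variation of area (or the quasi-spherical equation of Claim \ref{quassphericalscalar}) controls the sign of the scalar curvature. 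The strict inequality is obtained since for any fixed such construction one stays strictly below, but one can approach $2\diam$.

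\textbf{Upper bound.} For the upper bound $\Lambda_+(\mathbf{S}^2,\gamma)<12\pi\diam(\mathbf{S}^2,\gamma)$, I would take an arbitrary NNSC fill-in $(\Omega,g)$ of $(\mathbf{S}^2,\gamma,H)$ with $H>0$ and estimate $\int_{\mathbf{S}^2}H\,d\mu_\gamma$ directly. The natural tool is the Gauss equation on the boundary: $R_g = 2K_\gamma - H^2 + \|A\|^2 - 2\,\ric_g(\nu,\nu) + \cdots$, but more efficiently one uses that along an outward (or inward) normal foliation the Hawking-type or Geroch-type monotonicity controls $\int H\,d\mu$. Concretely, I expect the argument runs by foliating a collar of $\partial\Omega$ inside $\Omega$ by level sets of the distance function to the boundary, writing the evolution of $\int_{\Sigma_s} H_s\,d\mu_s$ under the second variation formula, and using $R_g\geq 0$ together with Gauss--Bonnet $\int_{\Sigma_s} K_{\gamma_s}\,d\mu_s = 4\pi$ (since each $\Sigma_s$ is a topological $\mathbf{S}^2$). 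This forces $\int_{\Sigma_s} H_s^2\,d\mu_s$ to be comparable to $16\pi$ up to the length one can push the foliation, and then Cauchy--Schwarz gives $\left(\int H\,d\mu\right)^2 \leq \text{Area}\cdot\int H^2\,d\mu$; combining with an isoperimetric/diameter bound for Area should yield the constant $12\pi\,\diam$. The precise constant $12\pi$ will come from carefully combining Gauss--Bonnet ($4\pi$), the number of ``layers'' the foliation survives (bounded in terms of the diameter), and a Bishop--Gromov or Bray-type volume comparison.

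\textbf{Main obstacle.} The hard part will be the upper bound, specifically controlling how far the boundary normal foliation of a general NNSC fill-in can be extended before the mean curvature blows up or the leaves become singular, and relating that distance to $\diam(\mathbf{S}^2,\gamma)$ rather than to the (uncontrolled) interior geometry of $\Omega$. I anticipate one must use the positivity $H>0$ to run an inverse-mean-curvature-flow-type or Bartnik quasi-spherical argument, or else prove an a priori bound showing that if $\int_{\mathbf{S}^2} H\,d\mu_\gamma$ were too large the fill-in would violate the positive mass theorem (Theorem \ref{pmt1} in the normalized spherical case, via a comparison using $K_\gamma\geq 0$ and Lemma \ref{c0monotonicity}). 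In fact the cleanest route to the upper bound may be: rescale so $\gamma$ is pinched between round metrics of radii controlled by $\diam$, apply monotonicity (Lemma \ref{c0monotonicity}) to compare $\Lambda_+(\mathbf{S}^2,\gamma)$ with $\Lambda_+(\mathbf{S}^2,\gamma_{\text{round}})=8\pi r$, and absorb the discrepancy into the constant $12\pi$ versus $8\pi$; making the comparison path of metrics monotone while keeping Gauss curvature nonnegative along the path is the delicate technical point I would expect to occupy most of the work.
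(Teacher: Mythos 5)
Your proposal misses the key reduction the paper makes, and both of your proposed arguments have genuine gaps. The paper first proves (Theorem \ref{c111}) that for $K_\gamma\geq 0$ one has $\Lambda_+(\mathbf{S}^2,\gamma)=\int_{\mathbf{S}^2}H_0\,d\mu_\gamma$, the total mean curvature of the $C^{1,1}$ isometric embedding of $(\mathbf{S}^2,\gamma)$ into $\mathbf{R}^3$ as a convex surface $\Sigma$ (Guan--Li, Hong--Zuily). After that, both inequalities in \eqref{totalmeancurradius} are pure convex geometry, resting on Lemma \ref{monoquermassintegral}: if one $C^{1,1}$ convex surface encloses another, area and total mean curvature are monotone (a quermassintegral monotonicity, proved via the Steiner formula). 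For the upper bound, $\Sigma$ lies in a ball of radius $\tfrac32\diam(\gamma)$ centered at the midpoint of a segment joining two points realizing the intrinsic diameter, and the sphere of that radius has total mean curvature exactly $12\pi\diam(\gamma)$. For the lower bound, a plane through the two extremal points cuts $\Sigma$ in a planar convex curve of length $\geq 2\diam(\gamma)$ and $<2\pi l$, where $l$ is the extrinsic diameter, so $l>\diam(\gamma)/\pi$; a thin capsule of length nearly $l$ inscribed in $\Sigma$ has total mean curvature nearly $2\pi l$, whence $\int H_0\geq 2\pi l>2\diam(\gamma)$.

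Your lower bound fails as stated: the long thin ``cigar'' $\gamma\times[0,L]$ capped off (or a tube around a diameter-realizing geodesic) is not a fill-in of $(\mathbf{S}^2,\gamma,H)$ --- its boundary metric is a capsule-type metric, not $\gamma$ --- so it gives no lower bound on the supremum defining $\Lambda_+(\mathbf{S}^2,\gamma)$. The correct role of the capsule is as a convex body \emph{inscribed in} the Euclidean convex body bounded by the Weyl embedding, combined with Lemma \ref{monoquermassintegral}; the fill-in one actually exhibits is the flat convex body itself. Your upper bound is also not salvageable as proposed: one cannot foliate a collar of the boundary of an \emph{arbitrary} NNSC fill-in by distance level sets and run a Geroch-type monotonicity, since the interior geometry is completely uncontrolled and the leaves can degenerate immediately; and your fallback of pinching $\gamma$ between round metrics via Lemma \ref{c0monotonicity} would require a pointwise tensor bound $\gamma\leq R^2\gamma_{\rm std}$ with $R$ controlled by the diameter alone, which a general nonnegatively curved metric need not satisfy. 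The passage through the isometric embedding is precisely what converts the intrinsic hypothesis $K_\gamma\geq0$ into the extrinsic convexity that makes both bounds elementary.
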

Then we prove the $C^0$ continuity of $\Lambda_+(\mathbf{S}^2,\gamma)$ with respect to $\gamma$ in the class of nonnegatively curved metrics. To state more concisely, we introduce the following notion.
\begin{defn}
Let $\Sigma$ be a smooth manifold and $\gamma_1$, $\gamma_2$ be two metrics on $\Sigma$. Define the dilation between $\gamma_1$ and $\gamma_2$ by
\begin{align*}
\dial(\gamma_1,\gamma_2)=\inf\left\{\lambda\,\bigg |\,\begin{array}{c}
\text{there is a diffeomorphism $\phi$ such}\\ \text{that }\  \lambda^{-1}\gamma_{2}\leq\phi^*(\gamma_1)\leq\lambda\gamma_{2}
\end{array}
\right\}.
\end{align*}
When $\Sigma$ is diffeomorphic to spheres with the canonical differential structure, $\dial(\gamma,\gamma_{\rm std})$ is abbreviated as $\dial(\gamma)$.
\end{defn}
It is clear that $\dial(\cdot,\cdot)$ is symmetric and invariant under diffeomorphisms.  The $C^0$ continuity of $\Lambda_+(\mathbf{S}^2,\gamma)$ is the following result: 

\begin{thm}\label{2dstability}
Let $\gamma_0$ be a smooth metric on $\mathbf S^2$ with $K_{\gamma_0}\geq0$. Then for any $\varepsilon>0$, there is a $\delta=\delta(\varepsilon,\gamma_0)>1$ such that for any
metric $\gamma$ with $K_\gamma\geq 0$ and $\dial(\gamma,\gamma_0)\leq\delta$,  
$$
\big|\Lambda_+\left(\mathbf S^2,\gamma\right)-\Lambda_+\left(\mathbf S^2,\gamma_0\right)\big|\leq\varepsilon.
$$
\end{thm}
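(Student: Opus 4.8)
The plan is to identify $\Lambda_+(\mathbf{S}^2,\gamma)$, for $K_\gamma\geq0$, with the total mean curvature of the convex realization of $(\mathbf{S}^2,\gamma)$ in $\mathbf{R}^3$, and then to deduce the continuity from a Blaschke-type compactness argument on convex bodies, using Pogorelov's rigidity theorem to pin down the limit.

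First I would set up two reductions. Since $\mathbf{S}^2$ admits no flat metric, any $\gamma$ with $K_\gamma\geq0$ is quasi-positively curved, so by the Weyl--Pogorelov realization of metrics of nonnegative curvature on $\mathbf{S}^2$ as closed (possibly degenerate, $C^{1,1}$) convex surfaces $\Sigma_\gamma\subset\mathbf{R}^3$, together with Theorem \ref{pmt2} in dimension $3$ (where the positive mass theorem for asymptotically flat manifolds is the classical theorem of Schoen and Yau), one has
$$
\Lambda_+\left(\mathbf{S}^2,\gamma\right)=\int_{\Sigma_\gamma}H_0\,d\mu_\gamma ,
$$
the total mean curvature of the convex body bounded by $\Sigma_\gamma$. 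Second, since $\Lambda_+$ and $\dial(\cdot,\cdot)$ are both diffeomorphism invariant, it is enough to show that if $K_{\gamma_i}\geq0$ and $\dial(\gamma_i,\gamma_0)\to1$ then $\Lambda_+(\mathbf{S}^2,\gamma_i)\to\Lambda_+(\mathbf{S}^2,\gamma_0)$, and I would argue by contradiction, passing to a subsequence along which $|\Lambda_+(\mathbf{S}^2,\gamma_i)-\Lambda_+(\mathbf{S}^2,\gamma_0)|\geq\varepsilon_0$.

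The core step is the compactness. From $\dial(\gamma_i,\gamma_0)\to1$ I get uniform two-sided bounds on the intrinsic diameters of $(\mathbf{S}^2,\gamma_i)$, hence, since for convex surfaces the extrinsic diameter is at most the intrinsic one, uniform bounds on the diameters of the convex bodies bounded by $\Sigma_{\gamma_i}$; applying rigid motions and Blaschke's selection theorem, a subsequence converges in the Hausdorff metric to a convex body with boundary $\Sigma_\infty$. The intrinsic metric of $\Sigma_\infty$ is the uniform (equivalently Gromov--Hausdorff) limit of the intrinsic metrics $\gamma_i$, a classical fact for Hausdorff-convergent sequences of non-degenerate convex surfaces, and since $\dial(\gamma_i,\gamma_0)\to1$ forces $\gamma_i\to\gamma_0$ in the Gromov--Hausdorff sense, that limit is isometric to $(\mathbf{S}^2,\gamma_0)$ (here $\Sigma_\infty$ is genuinely two-dimensional because a smooth metric on $\mathbf{S}^2$ cannot be realized as a doubled planar convex region). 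Pogorelov's rigidity theorem now gives $\Sigma_\infty=\Sigma_{\gamma_0}$ up to a rigid motion. Since $B\mapsto\int_{\partial B}H_0\,dA$ is continuous with respect to the Hausdorff distance on convex bodies --- up to a dimensional constant it is the mean width, a quermassintegral --- we obtain $\Lambda_+(\mathbf{S}^2,\gamma_i)=\int_{\Sigma_{\gamma_i}}H_0\,d\mu_{\gamma_i}\to\int_{\Sigma_{\gamma_0}}H_0\,d\mu_{\gamma_0}=\Lambda_+(\mathbf{S}^2,\gamma_0)$, a contradiction; running this on every subsequence yields the quantitative assertion.

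The main obstacle is the passage from intrinsic closeness (two-sided bi-Lipschitz, i.e. $\dial$ close) to extrinsic Hausdorff closeness of the realizations, namely the stability of the Weyl--Pogorelov embedding problem for nonnegatively curved metrics that are only weakly close; this is precisely where Pogorelov's rigidity theorem for general convex surfaces, and the continuity of the intrinsic metric under Hausdorff limits, are indispensable. A secondary point requiring care is that the identity $\Lambda_+(\mathbf{S}^2,\gamma)=\int_{\Sigma_\gamma}H_0\,d\mu_\gamma$ must be known for merely $C^{1,1}$, possibly degenerate, convex realizations and not just smooth strictly convex ones; this should follow by approximating $\gamma$ by smooth metrics of strictly positive curvature, applying the smooth case of Theorem \ref{pmt2}, and combining the Hausdorff-continuity of $\int_{\partial B}H_0\,dA$ with the upper semicontinuity of $\Lambda_+$ furnished by the monotonicity Lemma \ref{c0monotonicity}. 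I remark that Lemma \ref{c0monotonicity} together with the scaling property of $\Lambda_+$ would give the two-sided estimate immediately if one could construct a monotone nonnegatively curved path between two comparable nonnegatively curved metrics on $\mathbf{S}^2$; I favor the compactness route above because such a path is not easy to build directly, linear interpolation not preserving $K\geq0$ in general.
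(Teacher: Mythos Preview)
Your proposal is correct and matches the paper's strategy: the identity $\Lambda_+(\mathbf{S}^2,\gamma)=\int_{\Sigma_\gamma}H_0\,d\mu_\gamma$ for $K_\gamma\geq 0$ is exactly the paper's Theorem~\ref{c111} (proved via the approximation by strictly positively curved metrics that you sketch as your ``secondary point''), and the contradiction argument through Blaschke selection, convergence of intrinsic metrics under Hausdorff convergence, Pogorelov's rigidity, and Hausdorff-continuity of the total mean curvature (the paper's Lemma~\ref{monoquermassintegral}) is the same. The one minor difference is that the paper does not exclude a doubly-covered planar limit a priori via smoothness of $\gamma_0$ as you do, but instead verifies Gromov--Hausdorff convergence explicitly in that degenerate case and then lets Pogorelov's rigidity for general convex surfaces force $\Sigma_\infty$ to be congruent to the non-degenerate $\Sigma_{\gamma_0}$.
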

Note that Propsition \ref{2dboundedness} and Theorem \ref{2dstability} jointly imply that Conjecture \ref{conj1} is true for nonnegatively curved $2$-spheres under the positive mean curvature restriction. 

To prove these two theorems, we first seek a more explicit expression for $\Lambda_+$-invariant of nonnegatively curved $\left(\mathbf S^2,\gamma\right)$. When $K_\gamma>0$, due to the classical results by Nirenberg  \cite{N} and Pogogolev \cite{P}, $(\mathbf{S}^2,\gamma)$ can be smoothly isometrically embedded in $\mathbf R^3$ as a convex surface. In this case, by the result of L.-F. Tam and the first author \cite{ST02}, 
\begin{equation}\label{positivelycurved}
\Lambda_+\left(\mathbf{S}^2,\gamma\right)=\int_{\mathbf{S}^2}H_0\,d\mu_{\gamma},
\end{equation}
where $H_0$ is the mean curvature of the embedding of $(\mathbf{S}^2,\gamma)$ with respect to the outward normal. When $K_\gamma$ is only nonnegative, by Guan-Li's result \cite{GL} and Hong-Zuily's result \cite{Hong}, $(\mathbf{S}^2,\gamma)$ can be $C^{1,1}$ isometrically embedded in $\mathbf{R}^3$ as a convex surface with mean curvature $H_0$ defined almost everywhere. According to Porogolev's rigidity theorem \cite[P.167, Theorem 1]{P}, $\int_{\mathbf{S}^2}H_0\,d\mu_{\gamma}$ is well-defined in the sense that it is the same for any $C^{1,1}$ isometric embeddings. Intuitively, \eqref{positivelycurved} should also holds for the nonnegatively curved case. Indeed, by Theorem 0.2 in \cite{ST04}, for such $(\mathbf{S}^2,\gamma)$, there holds
$$\Lambda_+\left(\mathbf{S}^2,\gamma\right)\leq\int_{\mathbf{S}^2}H_0\,d\mu_{\gamma}.$$
But we do not know whether $\Lambda_+$ can be achieved in this case since the isometric embedding of $(\mathbf{S}^2,\gamma)$ is only $C^{1,1}$. So we can't obtain the equality directly from the rigidity part of Theorem 1.2 in \cite{ST19}. We turn to use Lemma \ref{c0monotonicity} and the following monotonicity lemma for total mean curvature of convex surfaces.

\begin{lm}\label{monoquermassintegral}
Let $\Sigma_1$ and $\Sigma_2$ be two $C^{1,1}$ closed convex surfaces in $\mathbf{R}^{3}$. Suppose $\Sigma_1$ is enclosed by $\Sigma_2$. Then 
\begin{equation*}
|\Sigma_1|\leq |\Sigma_2|,\quad \text{and}\quad\int_{\Sigma_1}H\,d\mu\leq \int_{\Sigma_2}H\,d\mu,
\end{equation*}
where $H$ denotes the mean curvature of $\Sigma_1$ or $\Sigma_2$ with respect to the outward normal. As a corollary, if $\{\Sigma_i\}$ is a sequence of $C^{1,1}$ closed convex surfaces that converges to a $C^{1,1}$ closed convex surface $\Sigma_{\infty}$ in the Hausdorff distance, then both the area and total mean curvature of $\Sigma_i$ converge to that of $\Sigma_{\infty}$. 
\end{lm}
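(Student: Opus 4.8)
The plan is to first establish the statement for smooth (i.e.\ $C^\infty$) strictly convex surfaces and then pass to the $C^{1,1}$ case by approximation, using the well-definedness of the total mean curvature under $C^{1,1}$ isometric embeddings already recalled above. For the smooth case, the natural tool is the theory of mixed volumes / quermassintegrals. Recall that for a smooth convex body $K\subset\mathbf R^3$ with boundary $\Sigma=\partial K$ one has $|\Sigma|=2W_1(K)$ (up to the standard normalizing constant) and $\int_\Sigma H\,d\mu = 2\cdot 2\,W_2(K)$, where $W_1,W_2$ are quermassintegrals; the key fact is the monotonicity of mixed volumes under inclusion of convex bodies: if $K_1\subset K_2$ then $W_j(K_1)\le W_j(K_2)$ for every $j$. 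Since $\Sigma_1$ being enclosed by $\Sigma_2$ means precisely that the convex body $K_1$ bounded by $\Sigma_1$ is contained in the convex body $K_2$ bounded by $\Sigma_2$, both inequalities follow at once. As a self-contained alternative that avoids quoting the full Hadwiger theory, I would use the Minkowski-type integral-geometric formula: for a convex body $K$, $\int_{\partial K}H\,d\mu$ is (a constant multiple of) the integral over the unit sphere $\mathbf S^2\subset\mathbf R^3$ of the support function $h_K(\nu)$, namely $\int_{\partial K}H\,d\mu = 2\int_{\mathbf S^2}h_K(\nu)\,d\nu$; since $K_1\subset K_2$ forces $h_{K_1}\le h_{K_2}$ pointwise on $\mathbf S^2$, the monotonicity is immediate, and the area monotonicity $|\Sigma_1|\le|\Sigma_2|$ is the classical fact that the surface area of a convex body is monotone under inclusion (nearest-point projection onto $K_1$ is $1$-Lipschitz and maps $\partial K_2$ onto $\partial K_1$).

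For the $C^{1,1}$ case, the idea is to approximate each $\Sigma_i$ by smooth convex surfaces. Given a $C^{1,1}$ closed convex surface $\Sigma=\partial K$, mollify the support function $h_K$ on $\mathbf S^2$ and add a small multiple of the constant function; this produces a family of smooth strictly convex bodies $K^{(\eta)}$ with $K^{(\eta)}\to K$ in the Hausdorff distance as $\eta\to0$, and moreover one can arrange the inclusions to be preserved, i.e.\ if $K_1\subset K_2$ then $K_1^{(\eta)}\subset K_2^{(\eta)}$ (mollification and adding constants are monotone operations on support functions). On the smooth level we already have $|\partial K_1^{(\eta)}|\le|\partial K_2^{(\eta)}|$ and $\int H\le\int H$. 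It remains to check that area and total mean curvature are continuous under this approximation: area continuity is standard (Hausdorff convergence of convex bodies implies convergence of surface areas), and total mean curvature continuity follows because $\int_{\partial K^{(\eta)}}H\,d\mu = 2\int_{\mathbf S^2}h_{K^{(\eta)}}\,d\nu\to 2\int_{\mathbf S^2}h_K\,d\nu$, the last quantity being exactly the well-defined total mean curvature of the $C^{1,1}$ surface $\Sigma$ by Pogorelov's rigidity theorem as cited. Passing to the limit $\eta\to0$ in the two smooth inequalities gives the claim for $\Sigma_1,\Sigma_2$.

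Finally, the corollary about a Hausdorff-convergent sequence $\Sigma_i\to\Sigma_\infty$ follows from the same integral representations: Hausdorff convergence of the bounding convex bodies is equivalent to uniform convergence of support functions on $\mathbf S^2$, hence $\int_{\mathbf S^2}h_{K_i}\,d\nu\to\int_{\mathbf S^2}h_{K_\infty}\,d\nu$, i.e.\ total mean curvatures converge, and likewise areas converge by continuity of surface area on the space of convex bodies. The main obstacle I anticipate is purely technical rather than conceptual: making the mollification argument clean in the $C^{1,1}$ setting — in particular justifying that the mollified support functions still define genuine (strictly) convex bodies and that the monotone inclusion $K_1\subset K_2$ survives the smoothing — and matching the various normalizing constants between the ``$\int H\,d\mu$'' convention used in the paper and the ``support function integral'' convention; but none of this is deep, since the support-function picture makes every monotonicity statement transparent.
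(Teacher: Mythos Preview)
Your proposal is correct but follows a genuinely different route from the paper's own proof. You argue via convex-geometric machinery --- the support-function identity $\int_{\partial K} H\,d\mu = 2\int_{\mathbf S^2} h_K\,d\nu$, monotonicity of support functions under inclusion, the $1$-Lipschitz nearest-point projection for area, and mollification of support functions for the $C^{1,1}$ approximation. The paper instead gives a deliberately ``geometric-analytic'' argument: for area it uses subharmonicity of the distance function to $\Sigma_1$ and the divergence theorem; for total mean curvature it invokes Steiner's formula $|\Sigma^\rho_i| = |\Sigma_i| + (\int_{\Sigma_i}H)\rho + 4\pi\rho^2$ for the outer parallel surfaces and compares the $\rho$-coefficients; for $C^{1,1}$ approximation it runs a short-time mean curvature flow plus homothetic scaling to get smooth convex approximants with $C^{1,1}$ convergence; and for the corollary it sandwiches $\Sigma_i$ between $\lambda_i^{-1}\Sigma_\infty$ and $\lambda_i\Sigma_\infty$ and applies the monotonicity just proved. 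The authors explicitly note (just before their proof) that the quermassintegral route you took is available and quicker but requires importing integral-geometry facts; their version is more self-contained. One minor point: your appeal to Pogorelov's rigidity to justify that $2\int_{\mathbf S^2} h_K\,d\nu$ equals the a.e.-defined $\int_\Sigma H\,d\mu$ for a $C^{1,1}$ surface is somewhat indirect --- it is cleaner either to verify this identity directly via the area formula for the Lipschitz Gauss map, or (as the paper does) to arrange approximants converging in $C^{1,1}$ so that the passage to the limit in $\int H\,d\mu$ is immediate.
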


This monotonicity lemma will be used several times in this section. It can be seen quickly from the geometric probabilistic explanation of quermassintegral (see Chapters 13--14 in \cite{San}), but this requires certain knowledge of integral geometry and convex geometry. For the sake of completeness, we write a more ``geometric analytic" proof here, which might be known before.

\begin{proof}[Proof of Lemma \ref{monoquermassintegral}]
We first consider the smooth case. For $\rho\geq 0$, let $\Sigma^\rho_i$ be the set of points lying in the exterior region of $\Sigma_i$ in $\mathbf R^3$ with distance $\rho$ to $\Sigma_i$ $(i=1,\,2)$. Then both $\Sigma^\rho_1$ and $\Sigma^\rho_2$ are smooth convex surfaces. 
Denote the distance function to $\Sigma_1$ by $d_1$. For any point $p\in\Sigma^\rho_1$,  $(\Delta d_1)(p)=H_{\Sigma_1^\rho}(p)\geq 0$, where $H_{\Sigma_1^\rho}(p)$ denotes the mean curvature of $\Sigma_1^\rho$ at $p$. So $\Delta d_1\geq 0$ in the exterior region of $\Sigma_1$. Let $\Omega$ be the region enclosed by $\Sigma_2$ and $\Sigma_1$.   By the divergence theorem, we have 
\begin{equation*}
\int_{\Sigma_2}\frac{\partial d_1}{\partial \nu}\,d\mu-\int_{\Sigma_1}\frac{\partial d_1}{\partial \nu}\,d\mu=\int_{\Omega}\Delta d_1\,d\mu\geq 0.
\end{equation*}
It is obvious that $\frac{\partial d_1}{\partial \nu}\equiv 1$ on $\Sigma_1$ and $0<\frac{\partial d_1}{\partial \nu}\leq 1$ on $\Sigma_2$. Hence, $|\Sigma_2|\geq|\Sigma_1|$. 
 By the second variation formula and Gauss-Bonnet formula, we have 
 the expansion for the area of $\Sigma^\rho_i$, namely the Steiner's formula in dimension $2$: 
\begin{equation*}
\left|\Sigma^\rho_i\right|=\left|\Sigma_i\right|+\left(\int_{\Sigma_i}H\,d\mu\right)\rho+4\pi\rho^2.
\end{equation*}

Since $\Sigma_1$ is enclosed by $\Sigma_2$, $\Sigma^\rho_1$ is also enclosed by $\Sigma^\rho_2$. Similarly, we have $|\Sigma^\rho_1|\leq |\Sigma^\rho_2|$ for all $\rho\geq 0$.
Because the leading terms of the expansions of $|\Sigma^\rho_1|$ and $|\Sigma^\rho_2|$ are the same, the coefficients of the second terms must satisfy
\begin{equation}\label{monototalmeancureq}
\int_{\Sigma_1}H\,d\mu\leq \int_{\Sigma_2}H\,d\mu.
\end{equation}
Thus the lemma is proved for the smooth case. 

Next we use approximations to deal with the non-smooth case. By running the mean curvature flow for a short time and scaling homothetically slightly if needed, we can get smooth convex surface $\Sigma_{i,\,\varepsilon}$ that converges to $\Sigma_{i}$ in $C^{1,1}$ as $\varepsilon\rightarrow 0$ for $i=1$, $2$; and $\Sigma_{1,\,\varepsilon}$ is enclosed by $\Sigma_{2,\,\varepsilon}$. 
Then we have
\begin{equation*}
\int_{\Sigma_{1,\,\varepsilon}}H\,d\mu\leq \int_{\Sigma_{2,\,\varepsilon}}H\,d\mu.
\end{equation*}
Since the convergence is in $C^{1,1}$, by letting $\varepsilon\rightarrow 0$, we also get \eqref{monototalmeancureq}. 

If $\{\Sigma_i\}$ is a sequence of $C^{1,1}$ closed convex surfaces that converges to a $C^{1,1}$ closed convex surface $\Sigma_{\infty}$ in the Hausdorff distance, assuming the origin is enclosed by $\Sigma_{\infty}$,  we can find a sequence of constants $\{\lambda_i\}$ that $\lambda_i\geq 1$ and $\lambda_i\rightarrow 1$, such that $\lambda_i^{-1}\Sigma_{\infty}$ is enclosed by $\Sigma_i$ and $\Sigma_i$ is enclosed by $\lambda_i\Sigma_{\infty}$, where $\lambda_i^{-1}\Sigma_{\infty}$ means homothetically scaling $\Sigma_{\infty}$  by $\lambda_i^{-1}$. Then the convergence of area and total mean curvature follows from the monotonicity. This completes the proof.
\end{proof}

Now, we are ready to show that $\Lambda_+(\mathbf{S}^2,\gamma)$ equals to the total mean curvature of the $C^{1,1}$ embedding of $(\mathbf{S}^2,\gamma)$ in $\mathbf{R}^3$ when $K_{\gamma}\geq0 $.

\begin{thm}\label{c111}
Let $\gamma$ be a smooth metric on $\mathbf{S}^2$ with $K_{\gamma}\geq0$. Then
$$\Lambda_+\left(\mathbf{S}^2,\gamma\right)=\int_{\mathbf{S}^2}H_0\,d\mu_{\gamma},$$
where $H_0$ is the mean curvature of the $C^{1,1}$ isometric embedding of $(\mathbf{S}^2,\gamma)$ in $\mathbf R^3$ with respect to the outward normal. 
\end{thm}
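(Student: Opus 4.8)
The inequality $\Lambda_+(\mathbf{S}^2,\gamma)\le\int_{\mathbf{S}^2}H_0\,d\mu_\gamma$ has already been recorded above as a consequence of Theorem~0.2 in \cite{ST04}, so the whole content of the statement is the reverse inequality $\Lambda_+(\mathbf{S}^2,\gamma)\ge\int_{\mathbf{S}^2}H_0\,d\mu_\gamma$. The plan is to approximate $\gamma$ from below by a smooth, monotonically increasing family of \emph{strictly} positively curved metrics, for which the Shi--Tam identity \eqref{positivelycurved} is available, to push this down using the monotonicity Lemma~\ref{c0monotonicity}, and then to pass to the limit with the help of Lemma~\ref{monoquermassintegral} and Pogorelov's rigidity theorem.

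\emph{Constructing the approximating family.} I would run the (unnormalised) Ricci flow $\partial_t g=-R_g\,g$ with initial metric $\gamma$, obtaining a smooth solution $g(t)$ on some interval $[0,t_1]$. Nonnegativity $R_{g(t)}\ge 0$ is preserved, and since $\int_{\mathbf{S}^2}R_{g(t)}\,d\mu>0$ by Gauss--Bonnet, the strong maximum principle forces $R_{g(t)}>0$ for $t>0$; moreover $\partial_t g=-R_g\,g\le 0$ shows that $g(t)$ is non-increasing in $t$. Reparametrising, set $\gamma_s:=g\big(t_1(1-s)\big)$ for $s\in[0,1]$. Then $\{\gamma_s\}_{s\in[0,1]}$ is a smooth path of metrics on $\mathbf{S}^2$ which is monotonically increasing in $s$, has $K_{\gamma_s}>0$ for $s<1$, has $K_{\gamma_1}=K_\gamma\ge 0$, and satisfies $\gamma_1=\gamma$.

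\emph{Monotonicity and passage to the limit.} For each fixed $s\in[0,1)$ the sub-path $\{\gamma_{s'}\}_{s'\in[s,1]}$ is a smooth, monotonically increasing path of NNSC metrics, so Lemma~\ref{c0monotonicity} gives $\Lambda_+(\mathbf{S}^2,\gamma_s)\le\Lambda_+(\mathbf{S}^2,\gamma)$. On the other hand $K_{\gamma_s}>0$, so by Nirenberg \cite{N} and Pogorelov \cite{P} the metric $\gamma_s$ embeds isometrically as a smooth strictly convex surface $\Sigma_s\subset\mathbf{R}^3$, and \eqref{positivelycurved} (the result of \cite{ST02}) yields $\Lambda_+(\mathbf{S}^2,\gamma_s)=\int_{\Sigma_s}H\,d\mu$, whence $\int_{\Sigma_s}H\,d\mu\le\Lambda_+(\mathbf{S}^2,\gamma)$ for all $s<1$. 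It then remains to show $\int_{\Sigma_s}H\,d\mu\to\int_{\mathbf{S}^2}H_0\,d\mu_\gamma$ as $s\to 1$. Since $g(t)\to\gamma$ in $C^\infty$ as $t\to 0$, the metrics $\gamma_s$ converge uniformly to $\gamma$ and the intrinsic diameters of $(\mathbf{S}^2,\gamma_s)$ stay bounded; as the extrinsic diameter of a convex surface is at most its intrinsic diameter, after normalising by rigid motions the solid convex bodies bounded by $\Sigma_s$ lie in a fixed ball, and Blaschke selection together with the continuity of the intrinsic metric under Hausdorff limits of convex surfaces shows that every subsequence has a further subsequence converging in the Hausdorff distance to a convex body whose boundary is isometric to $(\mathbf{S}^2,\gamma)$. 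By Pogorelov's rigidity theorem \cite{P} this boundary is a rigid motion of the $C^{1,1}$ isometric embedding $\Sigma$ of $(\mathbf{S}^2,\gamma)$, so after normalisation $\Sigma_s\to\Sigma$ in the Hausdorff distance, and Lemma~\ref{monoquermassintegral} gives $\int_{\Sigma_s}H\,d\mu\to\int_{\Sigma}H_0\,d\mu=\int_{\mathbf{S}^2}H_0\,d\mu_\gamma$. Combining this with the bound of the previous step yields $\Lambda_+(\mathbf{S}^2,\gamma)\ge\int_{\mathbf{S}^2}H_0\,d\mu_\gamma$, which together with the opposite inequality finishes the proof.

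\emph{Main obstacle.} The delicate point is this last step: because the limiting embedding is only $C^{1,1}$, one cannot invoke smooth convergence of the embeddings, and the argument has to be routed through Hausdorff convergence of the associated convex bodies, where Pogorelov's rigidity theorem (together with the stability of the Weyl problem that it underlies, and the continuity of the intrinsic metric under Hausdorff limits) is indispensable. Everything else is a straightforward assembly of facts already available in the paper.
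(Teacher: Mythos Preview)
Your argument is correct and follows the same overall architecture as the paper's proof: approximate $\gamma$ by strictly positively curved metrics along a monotone path, invoke the Shi--Tam identity \eqref{positivelycurved} for the approximants, feed this into Lemma~\ref{c0monotonicity}, and pass to the limit via Lemma~\ref{monoquermassintegral}. The technical choices differ in two places, and each trade-off is worth recording.

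First, for the approximation the paper does not run Ricci flow; instead it quotes the construction in \cite{Hong} of a function $v$ with $K_{e^{2v}\gamma}>0$, sets $\gamma_\varepsilon=e^{2\varepsilon v}\gamma$, and then sandwiches $\Lambda_+(\mathbf S^2,\gamma_\varepsilon)$ between $e^{\varepsilon\underline v}\Lambda_+(\mathbf S^2,\gamma)$ and $e^{\varepsilon\overline v}\Lambda_+(\mathbf S^2,\gamma)$ using \emph{two} monotone paths and Lemma~\ref{c0monotonicity}. This yields $\lim_{\varepsilon\to0}\Lambda_+(\mathbf S^2,\gamma_\varepsilon)=\Lambda_+(\mathbf S^2,\gamma)$ outright, so the paper's proof is self-contained and does not invoke \cite{ST04} for the upper bound. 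Your Ricci flow has the pleasant feature that $g(t)\le\gamma$ automatically, so a single monotone path suffices for the lower bound; the price is that you must import the upper bound from \cite{ST04}.

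Second, for the convergence of total mean curvature the paper avoids your Blaschke--selection/Pogorelov--rigidity detour: since the approximants $\gamma_\varepsilon$ are exactly those appearing in \cite{Hong}, that reference already provides a subsequence of embeddings $\Sigma_{\varepsilon_i}$ converging in $C^{1,\alpha}$ to a $C^{1,1}$ embedding of $(\mathbf S^2,\gamma)$, after which Lemma~\ref{monoquermassintegral} applies directly. Your route via Hausdorff limits is valid (and is essentially what the paper does later in the proof of Theorem~\ref{2dstability}), but you should be explicit that the limit convex set is non-degenerate---ruled out here because the areas $|\Sigma_s|$ stay bounded away from zero---before invoking Pogorelov's rigidity.
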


\begin{proof}
By the proof of Lemma 3.1 in \cite{Hong}, we can find a smooth function $v$ on $\mathbf{S}^2$ such that $e^{2v}\gamma$ has positive Gauss curvature. Then for any $s>0$, 
$\gamma_s=e^{2sv}\gamma$ has Gauss curvature 
$$K_{\gamma_s}=se^{2(1-s)v}K_{\gamma_1}+(1-s)e^{-2sv}K_{\gamma}>0.$$
Set $\overline v=\max_{\mathbf S^2} v$, $\underline v=\min_{\mathbf S^2} v$. 
Obviously, for any $\varepsilon>0$, both $\{e^{2\varepsilon t\left(v-\underline v\right)}\gamma\}_{t\in [0,1]}$ and $\{e^{2\varepsilon(1-t)\left(v-\overline v\right)}\gamma\}_{t\in [0,1]}$ are paths of monotonically increasing NNSC metrics. By Lemma \ref{c0monotonicity}, there holds
\begin{equation*}
 e^{\varepsilon\underline v}\Lambda_{+}\left(\mathbf{S}^2,\gamma\right)\leq \Lambda_{+}\left(\mathbf{S}^2,\gamma_{\varepsilon}\right)\leq e^{\varepsilon\overline v} \Lambda_{+}\left(\mathbf{S}^2,\gamma\right)
\end{equation*}
It follows that
\begin{equation}\label{app1}
\lim_{\varepsilon\rightarrow0^+} \Lambda_{+}\left(\mathbf{S}^2,\gamma_\varepsilon\right)=\Lambda_{+}\left(\mathbf{S}^2,\gamma\right).
\end{equation}
Since $K_{\varepsilon}>0$, by \cite{N,P}, we can embed $\left(\mathbf{S}^2,\gamma_\varepsilon\right)$ smoothly into $\mathbf{R}^3$ as a strictly convex surface. Denote this convex surface by $\Sigma_\varepsilon$ and its mean curvature by $H_{\varepsilon,\,0}$. Then by \cite{ST02}, we know that
\begin{equation}\label{app2} 
\Lambda_{+}\left(\mathbf{S}^2,\gamma_\varepsilon\right)=\int_{\Sigma_\varepsilon} H_{\varepsilon,\,0}\,d\mu_{\gamma_\varepsilon}.
\end{equation}
It was shown in \cite{Hong} that for a subsequence $\varepsilon_i\rightarrow 0$, $\Sigma_{\varepsilon_i}$ converges to a $C^{1,1}$ embedding of $(\mathbf{S}^2,\gamma)$ in $C^{1,\,\alpha}$ sense (for any $\alpha\in(0,1)$). Then by Lemma \ref{monoquermassintegral}, we obtain
\begin{equation}\label{app3}
\lim_{i\rightarrow\infty}\int_{\Sigma_{\varepsilon_i}} H_{\varepsilon_i,\,0}\,d\mu_{\gamma_{\varepsilon_i}}=\int_{\mathbf{S}^2}H_0\,d\mu_{\gamma}.
\end{equation}
Combining \eqref{app1}, \eqref{app2} and \eqref{app3} together, we finally get
$$\Lambda_+\left(\mathbf{S}^2,\gamma\right)=\int_{\mathbf{S}^2}H_0\,d\mu_{\gamma}.$$
This completes the proof.
\end{proof}
Once this is done, we can exploit some ideas and techniques from convex geometry. Now we prove Proposition \ref{2dboundedness}. 
\begin{proof}[Proof of Propsition \ref{2dboundedness}]
Denote the image of $(\mathbf{S}^2,\gamma)$ when $C^{1,1}$ isometric embedded in $\mathbf R^3$ by $\Sigma$. By Theorem \ref{c111}, $\Lambda_+\left(\mathbf{S}^2,\gamma\right)$ is equal to the total mean curvature of $\Sigma$. 

We first prove the upper bound estimate. Suppose that $p,\,q\in\Sigma$ realize $\diam(\gamma)$. Let $\overline{pq}$ be the segment connecting $p$ and $q$ in $\mathbf R^3$ and $|\overline{pq}|$ be its length. Denote the midpoint of $\overline{pq}$ by $o$. For any $x\in \Sigma$, we must have $d_\gamma(x,p)\leq\diam(\gamma)$. Then it follows that
 \begin{align*}
|\overline{xo}|&< |\overline{xp}|+|\overline{po}|\leq d_\gamma(x,p)+|\overline{po}|\\
&\leq \diam(\gamma)+\frac{1}{2}\diam(\gamma)=\frac{3}{2}\diam(\gamma).
\end{align*}
This means that $\Sigma$ is strictly contained in the ball centered at $o$ with radius $R=\frac{3}{2}\diam(\gamma)$. By Lemma \ref{monoquermassintegral}, the total mean curvature of $\Sigma$ is strictly less than the total mean curvature of the sphere of radius $R$, namely
$$\Lambda_+\left(\mathbf S^2,\gamma\right)<\Lambda_+\left(\mathbf S^2,R^2\gamma_{\rm std}\right)=12\pi\diam(\gamma).$$

Next, we prove the lower bound estimate. Suppose $p',\,q'\in\Sigma$ realize the extrinsic diameter of $\Sigma$, denoted by $l$. The first step is proving 
\begin{equation}\label{insegmentestimate}
l>\frac{\diam(\gamma)}{\pi}.
\end{equation}
Let $o'$ denote the midpoint of $\overline{p'q'}$. Since $\Sigma$ is convex, $o'$ lies in the interior of $\Sigma$. Then for any $x\in \Sigma$, we must have $|\overline{xo'}|<l$. Otherwise, the extrinsic diameter of $\Sigma$ is strictly greater than $l$. So $\Sigma$ is strictly contained in the ball centered at $o'$ with radius $l$. Take a plane $P$ that passes through $\overline{pq}$ and an arbitrary interior point of the convex body enclosed by $\Sigma$. Since $\Sigma$ is a closed $C^{1,1}$ convex surface, $\Sigma$ and $P$ intersect transversely. Denote the intersection curve by $\Gamma$. For $\Gamma$ passes through $p$, $q$ and $\Gamma\subset\Sigma$, its length $|\Gamma|\geq 2\diam(\gamma)$. Since $\Gamma$ is a planar convex curve enclosed by a circle of radius $l$, by one dimensional version of Lemma \ref{monoquermassintegral}, $|\Gamma|<2\pi l$. Thus we have proved $l>\diam(\gamma)/\pi$. For small $\varepsilon_1>0$, take $s$, $t\in\overline{p'q'}$ such that $|\overline{sp'}|=|\overline{tq'}|=\varepsilon_1$.  Take small $\varepsilon_2<\varepsilon_1$ such that the cylinder with axis $\overline{st}$ and radius $\varepsilon_2$ is enclosed by $\Sigma$. Cap the cylinder with two hemispheres of radius $\varepsilon_2$. Denote the combined surface by $C$. Apparently, $C$ is a $C^{1,1}$ convex surface. We can choose $\varepsilon_2$ small enough so that $C$ is enclosed by $\Sigma$. The total mean curvature of $C$ is  
$$\int_{C}H\,d\mu=\frac{1}{\varepsilon_2}\times 2\pi \varepsilon_2\times(l-2\varepsilon_1)+2\times 4\pi\varepsilon_2>2\pi(l-2\varepsilon_1).$$
Because $C$ is enclosed by $\Sigma$, by Lemma \ref{monoquermassintegral}, the total mean curvature of $\Sigma$ is not less than $2\pi(l-2\varepsilon_1)$. Since $\varepsilon_1$ can be arbitrarily small, in fact the total mean curvature is not less than $2\pi l$. Finally, due to \eqref{insegmentestimate}, we get the lower bound estimate. 
\end{proof}

\begin{rem}\label{GHrem}
There should be different proofs for \eqref{totalmeancurradius} type estimates. The constants in \eqref{totalmeancurradius} might not be sharp, but are enough for our purpose. In \cite{Topping2}, P. Topping proved a much more general result for the lower bound estimate, but with a larger constant when in the particular setting of Proposition \ref{2dboundedness}.
\end{rem}

Now we are in a position to prove Theorem \ref{2dstability}. 
\begin{proof}[Proof of Theorem \ref{2dstability}] We prove this result by contradiction. If the conclusion is not true, then there exists an $\varepsilon_0>0$, and a sequence of metrics $\gamma_i$ on $\mathbf{S}^2$ with $K_{\gamma_i}\geq0$ and $\dial(\gamma_i,\gamma_0)\rightarrow 1$ such that
$$\big|\Lambda_+\left(\mathbf S^2,\gamma_i\right)-\Lambda_+\left(\mathbf S^2,\gamma_0\right)\big|\geq \varepsilon_0.$$
Since $\dial\left(\gamma_i,\gamma_0\right)\rightarrow1$, by Theorem 7.3.25 in \cite{BBI}, we have
\begin{equation}\label{gh}
\left(\mathbf{S}^2,\gamma_i\right)\xrightarrow{d_{\rm GH}}\left(\mathbf{S}^2,\gamma_0\right),
\end{equation}
where $d_{\rm GH}$ denotes the Gromov-Hausdorff distance. By $\dial\left(\gamma_i,\gamma_0\right)\rightarrow1$, we also have 
\begin{equation}\label{diameterconverge}
\diam\left(\mathbf{S}^2, \gamma_i\right)\rightarrow \diam\left(\mathbf{S}^2, \gamma_0\right),
\end{equation}
and 
\begin{equation}\label{volumeconverge}
\vol(\mathbf{S}^2,\gamma_i)\rightarrow \vol(\mathbf{S}^2,\gamma_0).
\end{equation}

Now let $\Sigma_0$ and $\Sigma_i$ be the corresponding convex surfaces of $(\mathbf S^2,\gamma_0)$ and $(\mathbf S^2,\gamma_i)$ when isometrically embedded in $\mathbf R^3$. Due to \eqref{diameterconverge}, $\{\Sigma_i\}$ can be contained in a bounded ball after suitable translations. According to Blaschke's selection theorem (Theorem 1.8.7 in \cite{Sch}) and Lemma 1.8.1 in \cite{Sch}, $\{\Sigma_i\}$ has a subsequence that converges to the boundary of a compact convex set in the Hausdorff distance. Without loss of generality, we assume the subsequence is $\{\Sigma_i\}$ itself. Let $\Sigma_{\infty}$ be the Hausdorff limit of $\{\Sigma_i\}$. By Theorem 3 in \cite[p.424]{Alex}, $\Sigma_{\infty}$ is a point, a segment, or a generalized convex surface. Besides the usual convex surface, a generalized convex surface can be a doubly-covered planar convex  domain (two copies of a planar convex domain glued along the boundary).  If $\Sigma_{\infty}$ is a point or a segment, then $\Sigma_i$ can be enclosed by a convex surface with area goes to $0$ as $i\rightarrow 0$. By Lemma \ref{monoquermassintegral}, $|\Sigma_i|\rightarrow 0$, then we draw a contradiction from \eqref{volumeconverge}. Hence $\Sigma_{\infty}$ is a generalized  convex surface. Denote the intrinsic distances of $\Sigma_i$ and $\Sigma_{\infty}$ by $d_i$ and $d_{\infty}$ respectively. According to the convergence theory of convex surfaces in $\mathbf{R}^3$ (see P.13 in \cite{P}), $d_{i}$ uniformly converges to $d_{\infty}$. This means for any $\varepsilon>0$, there exists a $\delta>0$ and an integer $N$ such that for any $i\geq N$, any points $x_i,\,y_i\in \Sigma_i$ and $x,\,y\in\Sigma_{\infty}$ with $|x_i-x|<\delta$ and $|y_i-y|<\delta$, there holds 
$$\left|d_i(x_i,y_i)-d_{\infty}(x,y)\right|<\varepsilon.$$ Next, we show that
\begin{equation}\label{gh1}
\left(\Sigma_i,d_i\right)\xrightarrow{d_{\rm GH}} \left(\Sigma_{\infty},d_{\infty}\right).
\end{equation}
We discuss the following two cases: 

{\bf Case 1}. $\Sigma_{\infty}$ does not degenerate, i.e., $\Sigma_{\infty}$ is not a doubly-covered planar convex domain. Without loss of generality, we assume the origin is enclosed by $\Sigma_{\infty}$. Since $\Sigma_i\xrightarrow{d_{\rm H}}\Sigma_{\infty}$, where $d_{\rm H}$ denotes the Hausdorff distance, there exists a sequence of constants $\{\lambda_i\}$ that $\lambda_i\geq 1$ and $\lambda_i\rightarrow 1$ such that $\lambda_i^{-1}\Sigma_{\infty}$ is enclosed by $\Sigma_i$ and $\Sigma_i$ is enclosed by $\lambda_i\Sigma_{\infty}$.
Now, for each $i$, we define two maps 
\begin{equation*}
f_i: \Sigma_i\rightarrow \Sigma_{\infty}\ \ \ \ x\mapsto\lambda_i\mathcal P_{\Sigma_i,\,\lambda_i^{-1}\Sigma_{\infty}}(x),
\end{equation*}
and 
\begin{equation*}
h_i: \Sigma_{\infty}\rightarrow\Sigma_i\ \ \ \ x\mapsto\mathcal{P}_{\lambda_i\Sigma_{\infty},\,\Sigma_i}(\lambda_i x).
\end{equation*}
Here $\mathcal{P}_{\Sigma_i,\,\lambda_i^{-1}\Sigma_{\infty}}: \Sigma_i\rightarrow \lambda_i^{-1}\Sigma_{\infty}$ is the nearest point projection. Since $\Sigma_{\infty}$ is convex and $ \lambda_i^{-1}\Sigma_{\infty}$ is enclosed by $\Sigma_i$, this map is well-defined. $\mathcal{P}_{\lambda_i\Sigma_{\infty},\,\Sigma_i}:\lambda_i\Sigma_{\infty}\rightarrow\Sigma_i$ is defined in the same way.

From the facts that $d_i\rightarrow d_\infty$ uniformly and $\lambda_i\rightarrow1$, it is not hard to see that $f_i$ and $h_i$ are $\varepsilon_i$-GH approximations (for the definition of $\varepsilon$-GH approximation, we recommend \cite[P. 202]{R}) between $\Sigma_i$ and $\Sigma_{\infty}$ with $\varepsilon_{i}\rightarrow0$. Thus, by Lemma 1.3.4 in \cite{R}, \eqref{gh1} holds. 

{\bf Case 2}. $\Sigma_{\infty}$ is a doubly-covered planar convex domain. Denote the convex domain that generates $\Sigma_{\infty}$ by $D$. Without loss of generality, we assume $D$ lies in the $xy$-plane and the origin is an interior point of $D$. Designate one copy of $\Sigma_{\infty}$ as the ``upper" sheet for the convergence of points in the upper position of $\Sigma_i$. Similarly, we designate the other copy as the ``lower" sheet. Denote the upper sheet and lower sheet by $D\times\{0_+\}$ and $D\times\{0_-\}$ respectively. For $\varepsilon>0$, let ${\rm Cy}(D,\varepsilon)$ be the boundary of the convex body $D\times[-\varepsilon,\varepsilon]$. Since $\Sigma_i$ converges to $\Sigma_{\infty}$, there exists a sequence $\{\lambda_i\}$ that  $\lambda_i>1$ and $\lambda_i\rightarrow 1$ and a sequence $\{\delta_i\}$ that $\delta_i>0$ and $\delta_i\rightarrow 0$ such that $\lambda_i^{-1}\Sigma_{\infty}$ is enclosed by $\Sigma_i$ and $\Sigma_i$ is enclosed by ${\rm Cy}(\lambda_iD,\delta_i)$. Apparently, ${\rm Cy}(\lambda_iD,\delta_i)\xrightarrow{d_{\rm H}}\Sigma_{\infty}$ as $i\rightarrow\infty$. We first 
construct a map $f_i$ by
\begin{equation*}
f_i: \Sigma_i\rightarrow \Sigma_{\infty}\ \ \ \ x\mapsto\lambda_i\mathcal P_{\Sigma_i,\,\lambda_i^{-1}\Sigma_{\infty}}(x),
\end{equation*}
where the projection on $\lambda\Sigma_{\infty}$ is regarded as on $\lambda D\times\{0_+\}$, $\lambda D\times\{0_-\}$ or $\lambda(\partial D)$ depending on the projecting direction is downward, upward or horizontal. For each $i$, define a map $\Psi_i:\Sigma_{\infty}\rightarrow {\rm Cy}(\lambda_iD,\delta_i)$ by
\begin{equation*}
\left\{
\begin{aligned}
\Psi_i(x,0_{\pm})&=(\lambda x,\pm \delta_i)\quad\  x\in \mathring D, \\
\Psi_i(x)&=(\lambda x,0)\quad\quad\  x\in\partial D.
\end{aligned}
\right.
\end{equation*}
Then we construct a map $h_i$ by
 \begin{equation*}
h_i: \Sigma_{\infty}\rightarrow\Sigma_i\ \ \ \ x\mapsto\mathcal{P}_{{\rm Cy}(\lambda_iD,\,\delta_i),\,\Sigma_i}(\Psi_i(x)).
\end{equation*}
One can check that $f_i$ and $h_i$ are $\varepsilon_i$-GH approximations with $\varepsilon_i\rightarrow 0$ as $i\rightarrow\infty$. Hence, \eqref{gh1} also holds when $\Sigma_{\infty}$ is degenerate.

Combining \eqref{gh} and $\eqref{gh1}$ together, we conclude that 
$$d_{\rm GH}\left(\left(\mathbf{S}^2, d_{\gamma_0}\right),\left(\Sigma_{\infty},d_{\infty}\right)\right)=0.$$
Then, by Theorem 7.3.30 in \cite{BBI}, we know that
$\left(\mathbf{S}^2, d_{\gamma_0}\right)$ is isometric to $\left(\Sigma_{\infty},d_{\infty}\right)$. Here ``isometric" means that there exists a homomorphism $\phi:\mathbf{S}^2\rightarrow \Sigma_{\infty}$ such that $d_{\gamma_{0}}(x,y)=d_{\infty}(\phi(x),\phi(y))$ for any $x,y\in \mathbf{S}^2$.

Next, by Pogorelov's rigidity theorem \cite[P.167, Theorem 1]{P}, $\Sigma_{\infty}$ and $\Sigma_0$ are congruent, which means that there is an isometry of $\mathbf R^3$ that maps $\Sigma_{\infty}$ to $\Sigma_0$. Finally,  by Lemma \ref{monoquermassintegral}, we conclude that 
$$\lim_{i\rightarrow\infty}\Lambda_+\left(\mathbf S^2,\gamma_i\right)=\Lambda_+(\mathbf S^2,\gamma_0).$$
Thus we get a contradiction. The proof of this theorem is finished.
\end{proof}

We can weaken the Lipschitz distance in Theorem \ref{2dstability} to the Gromov-Hausdorff distance, namely
\begin{thm}\label{GHstability}
Let $\gamma_0$ be a smooth metric on $\mathbf S^2$ with $K_{\gamma_0}\geq0$. For any $\varepsilon>0$, there is a $\delta=\delta(\varepsilon,\gamma_0)>0$ such that for any
metric $\gamma$ with $K_\gamma\geq 0$ and $d_{\rm GH}\left((\mathbf{S}^2,\gamma),(\mathbf{S}^2,\gamma_0)\right)\leq\delta$,   
$$
\big|\Lambda_+\left(\mathbf S^2,\gamma\right)-\Lambda_+(\mathbf S^2,\gamma_0)\big|\leq\varepsilon.
$$
\end{thm}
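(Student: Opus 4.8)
The plan is to deduce Theorem \ref{GHstability} from Theorem \ref{2dstability} by showing that, \emph{within the class of nonnegatively curved metrics on $\mathbf S^2$}, Gromov--Hausdorff closeness to a fixed $\gamma_0$ already forces dilation closeness, i.e. $\dial(\gamma,\gamma_0)$ close to $1$. Equivalently, I would argue by contradiction: suppose there were $\varepsilon_0>0$ and metrics $\gamma_i$ with $K_{\gamma_i}\ge 0$ and $d_{\rm GH}((\mathbf S^2,\gamma_i),(\mathbf S^2,\gamma_0))\to 0$ but $|\Lambda_+(\mathbf S^2,\gamma_i)-\Lambda_+(\mathbf S^2,\gamma_0)|\ge\varepsilon_0$. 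The goal is then to run exactly the compactness argument already developed in the proof of Theorem \ref{2dstability}, but starting from the weaker GH hypothesis rather than from $\dial(\gamma_i,\gamma_0)\to 1$.

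First I would embed each $(\mathbf S^2,\gamma_i)$ in $\mathbf R^3$ as a $C^{1,1}$ convex surface $\Sigma_i$ via Theorem \ref{c111} (so $\Lambda_+(\mathbf S^2,\gamma_i)=\int_{\Sigma_i}H_0\,d\mu$), and likewise $\Sigma_0$ for $\gamma_0$. The first point to check is a uniform diameter (hence size) bound: GH convergence to $(\mathbf S^2,\gamma_0)$ gives $\diam(\mathbf S^2,\gamma_i)\to\diam(\mathbf S^2,\gamma_0)$, and since the extrinsic diameter of a convex surface is bounded by its intrinsic diameter, the $\Sigma_i$ lie (after translation) in a fixed ball. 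By Blaschke's selection theorem a subsequence of $\{\Sigma_i\}$ converges in Hausdorff distance to the boundary $\Sigma_\infty$ of a compact convex set, which by Alexandrov's theorem is a point, a segment, or a generalized convex surface. The area bound rules out the point/segment case: there $\vol(\mathbf S^2,\gamma_i)=|\Sigma_i|\to 0$ by Lemma \ref{monoquermassintegral}, while GH convergence forces $\vol(\mathbf S^2,\gamma_i)\to\vol(\mathbf S^2,\gamma_0)>0$ — here I use that GH convergence of nonnegatively curved surfaces preserves the $2$-dimensional Hausdorff measure (area), which holds because these are Alexandrov spaces of curvature $\ge 0$ and the limit is the $2$-dimensional space $(\mathbf S^2,\gamma_0)$, so volume is continuous under GH convergence within a fixed dimension. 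Then, exactly as in the proof of Theorem \ref{2dstability} (Cases 1 and 2, using the nearest-point projections and the uniform convergence $d_i\to d_\infty$ of intrinsic metrics from convex-surface convergence theory), one gets $(\Sigma_i,d_i)\xrightarrow{d_{\rm GH}}(\Sigma_\infty,d_\infty)$, hence $(\mathbf S^2,d_{\gamma_0})$ is isometric to $(\Sigma_\infty,d_\infty)$; Pogorelov's rigidity theorem then forces $\Sigma_\infty$ congruent to $\Sigma_0$, and Lemma \ref{monoquermassintegral} gives $\int_{\Sigma_i}H_0\,d\mu\to\int_{\Sigma_0}H_0\,d\mu$, i.e. $\Lambda_+(\mathbf S^2,\gamma_i)\to\Lambda_+(\mathbf S^2,\gamma_0)$, contradicting the assumption along the subsequence; since every subsequence has a further subsequence with this property, the full sequence converges, completing the contradiction.

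The main obstacle, and the only place the argument genuinely differs from that of Theorem \ref{2dstability}, is justifying the continuity of area under the bare GH hypothesis: in Theorem \ref{2dstability} the convergences \eqref{diameterconverge} and \eqref{volumeconverge} came for free from $\dial(\gamma_i,\gamma_0)\to 1$, whereas now diameter continuity is immediate from GH convergence but volume continuity is not a formal consequence of GH convergence for general spaces. I would secure it by invoking the structure of Alexandrov spaces: each $(\mathbf S^2,\gamma_i)$ is an Alexandrov space with curvature $\ge 0$, the GH limit $(\mathbf S^2,\gamma_0)$ is a smooth surface (hence genuinely $2$-dimensional and non-collapsed), and for a non-collapsing sequence of Alexandrov spaces the Hausdorff $2$-measures converge; alternatively, and more elementarily, once the $\Sigma_i$ are shown to be uniformly bounded convex surfaces one can appeal directly to the continuity of surface area under Hausdorff convergence of convex bodies (Lemma \ref{monoquermassintegral} and its corollary), provided one has already excluded degeneration to a point or segment — which is exactly what the volume input is needed for, so some independent lower bound on $|\Sigma_i|$ (equivalently on $\vol(\mathbf S^2,\gamma_i)$) must be extracted from the GH hypothesis. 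Establishing that lower bound — e.g. via a Bishop--Gromov-type volume comparison on the $\gamma_i$, noting $K_{\gamma_i}\ge 0$ and a uniform lower diameter bound inherited from GH convergence to a fixed nondegenerate limit — is the crux; once it is in hand the rest of the proof is a verbatim repetition of the compactness and rigidity steps already carried out for Theorem \ref{2dstability}.
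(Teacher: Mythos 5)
Your proposal is correct and follows essentially the same route as the paper: the paper's proof consists precisely of the observation that the argument of Theorem \ref{2dstability} carries over verbatim once \eqref{diameterconverge} and \eqref{volumeconverge} are re-established under the GH hypothesis, the first being immediate and the second being supplied by Theorem 0.1 of \cite{Co} (Colding's volume convergence under a lower Ricci bound, applicable since $K_{\gamma}\geq 0$ implies $\ric_\gamma\geq 0$ in dimension $2$ and the limit is the smooth surface $(\mathbf S^2,\gamma_0)$) — which is exactly your primary route via non-collapsed volume convergence. One small caution: your ``more elementary'' fallback at the end would not work, since $K_{\gamma_i}\geq 0$ together with a diameter lower bound gives only an \emph{upper} area bound via Bishop--Gromov, and indeed thin doubly-covered convex domains have nonnegative curvature, bounded diameter, and collapsing area; but this alternative is not needed, as the Colding/Alexandrov volume-convergence argument already closes the gap.
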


Most of the proof of Theorem \ref{2dstability} also works in the situation of Gromov-Hausdorff convergence. The additional things to verify is that \eqref{diameterconverge} and \eqref{volumeconverge} still hold under the Gromov-Hausdorff convergence. The first one is apparent, while the second one is guaranteed by Theorem 0.1 in \cite{Co}.

\begin{rem}
Theorem \ref{GHstability} can be generalized to the metrics having a lower curvature bound $\kappa$ for any $\kappa\leq 0$, with $\Lambda_+$ replaced by $\Lambda_{+,\,\kappa}$ simultaneously.
\end{rem}

\begin{proof}[Proof of Corollary \ref{cby}] 
Due to the Gauss-Bonnet Theorem, $\partial \Omega$ is a $2$-sphere. Denote the induced metric on $\partial \Omega$ from $g$ by $\gamma$.  By the $C^0$-Limit Theorem in \cite{Gro14} or Theorem 1 in \cite{Bam}, the Gauss curvature of $\gamma$ is nonnegative. Hence, Corollary \ref{cby} follows immediately from Theorem \ref{2dstability}.
\end{proof}

\section{Estimates for $\Lambda_+(\mathbf{S}^n, \gamma)$ when $n\geq3$}\label{section6}

In this section, we give estimates for $\Lambda_+(\mathbf{S}^n, \gamma)$ when $n\geq3$. A key observation is that {\it we can construct a smooth path of PSC metrics connecting a fixed PSC metric and another PSC metric that is sufficiently close to the fixed one in the $C^0$-topology.} Our argument mainly relies on the Ricci-DeTurck flow and quasi-spherical equation. Let us begin with a brief overview of the Ricci-DeTurck flow.

Let $(M,\bar g_0)$ be a smooth $n$-dimensional closed Riemannian manifold. The Ricci flow on $M$ starting from $\bar g_0 $ is a smooth, time-dependent family of Riemannian metrics $\bar{g}(t)$ that solves the following equation
\begin{equation}\label{R}
\left\{
\begin{aligned}
\partial_t\bar{g}(t)&=-2\ric_{\bar{g}(t)},\\
\bar{g}(0)&= \bar g_0.
\end{aligned}
\right.
\end{equation}
It is well known that a short time solution to the Ricci flow always exists and is unique (e.g., see \cite{PTopping}). Now we consider the Ricci-DeTurck flow with a background metric. Let $X_{\bar{g}}$ be the operator which maps a metric tensor to a vector field defined by
\begin{equation}\label{lie}
X_{\bar{g}}(g)=\sum_{i=1}^{n}\left(\nabla_{e_i}^{\bar{g}}e_i-\nabla_{e_i}^{{g}}e_i\right),
\end{equation}
where $\{e_i\}_{i=1}^n$ is any local orthonormal basis with respect to $g$. The Ricci-DeTurck flow with background metric $\bar g$ (Here $\bar g$ is either a fixed metric or a Ricci flow) and initial metric $g_0$ is
\begin{equation}\label{rt}
\left\{
\begin{aligned}
\partial_tg(t)&=-2\ric_{g(t)}-\mathcal{L}_{X_{\bar{g}}\left(g(t)\right)}g(t),\\
g(0)&= g_0.
\end{aligned}
\right.
\end{equation}

If $g(t)$ solves \eqref{rt}, then one can obtain a Ricci flow via a family of diffeomorphisms. More precisely, letting $\chi(t)$ be the family of diffeomorphisms satisfying
\begin{equation}\label{diff}
\left\{
\begin{aligned}
\partial_t\chi(t)&=X_{\bar{g}}(g(t))\circ\chi(t)\\
\chi(0)&= id,
\end{aligned}
\right.
\end{equation}
then $\chi(t)^*g(t)$ is the solution of Ricci flow with initial data $g(0)$.

Note that under the Ricci-DeTurck flow, the scalar curvature satisfies the following evolution equation (see Equation 2.24 in \cite{BG}),
$$\partial_t R_{g(t)}\geq \Delta_{g(t)}R_{g(t)}-\left\langle X_{\bar{g}}(g(t)),\nabla_{g(t)} R_{g(t)}\right\rangle_{g(t)}+\frac{2}{n}R_{g(t)}^2.$$
If $R_{g(0)}\geq \kappa$, then by the maximum principle, we have
$$R_{g(t)}\geq \frac{\kappa}{1-\frac{2\kappa}{n}t}.$$
Hence, positivity of scalar curvature is preserved under the Ricci-DeTurck flow.

In \cite{Simon}, Simon studied the existence and higher order derivatives estimates for the Ricci-DeTurck flow on a fixed background metric. If we have some further assumptions on the initial metric $g_0$, we may get better estimates for this flow.

\begin{lm}[Lemma 3.2 and Lemma 3.3 in \cite {ST18}]\label{1223}
For any $p>n$, there exists a constant $\epsilon_1(n,p)>0$ such that the following holds: Let $(M,\bar g)$ be a smooth closed $n$-dimensional Riemannian manifold. There exists a constant $T>0$ depending only on $n$ and $\|\Rm_{\bar g}\|_{L^\infty(M)}$, such that for any constant $G>0$ and any smooth metric $g_0$ on $M$ with $\|g_0-\bar g\|_{L^\infty(M)}\leq\epsilon_1(n,p)$ and $\left\|\nabla g_0\right\|_{L^p(M)}\leq G$, the Ricci-DeTurck flow \eqref{rt} with initial metric $g_0$ and background metric $\bar g$ admits a solution $g(t)$ in $[0,T]$ that satisfies
$\frac{1}{2}\bar g\leq g(t)\leq 2\bar g$ and
$$\left\|\nabla g(t)\right\|_{L^\infty(M)}\leq Ct^{-\frac{\sigma}{2}}, \quad\  \left\|\nabla^2 g(t)\right\|_{L^\infty(M)}\leq Ct^{-\frac{1+\sigma}{2}}$$
for all $t\in(0,T]$, where $\sigma=n/p$, the constant $C$ depends only on $n$, $p$, $G$ and $\bar{g}$. Here, the norms, covariant derivatives and integration are taken with respect to $\bar g$. 
\end{lm}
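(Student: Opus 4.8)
The plan is to analyze the Ricci-DeTurck flow \eqref{rt} as a strictly parabolic quasilinear system and to run the existence-and-regularity machinery of Simon \cite{Simon}, the new input being that the initial metric is controlled not only in $C^0$ but also in $W^{1,p}$ with $p>n$; this $W^{1,p}$ control is precisely what upgrades Simon's a~priori bounds to the scaling-sharp decay rates $t^{-\sigma/2}$, $t^{-(1+\sigma)/2}$ with $\sigma=n/p$. In coordinates adapted to $\bar g$, equation \eqref{rt} takes the schematic form
\begin{equation*}
\partial_t g_{ij}=g^{ab}\bar\nabla_a\bar\nabla_b g_{ij}+\bar g^{-1}*\bar g^{-1}*\Rm_{\bar g}*g+g^{-1}*g^{-1}*\bar\nabla g*\bar\nabla g,
\end{equation*}
where $*$ denotes metric contractions and $\bar\nabla$ is the Levi-Civita connection of $\bar g$. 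Since $g_0$ is smooth, short-time existence and smoothness of $g(t)$ for $t>0$ are classical; the point of the lemma is that the existence time and all the estimates depend only on $n$, $p$, $G$, $\|\Rm_{\bar g}\|_{L^\infty(M)}$ and $\bar g$, never on higher derivatives of $g_0$.

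First I would establish the $C^0$ bound $\tfrac12\bar g\le g(t)\le 2\bar g$. Setting $h=g-\bar g$, the tensor $h$ satisfies a strictly parabolic equation whose zeroth-order coefficient is controlled by $\|\Rm_{\bar g}\|_{L^\infty}$ and whose first-order term is quadratic in $\bar\nabla g$; applying the maximum principle to the largest and smallest eigenvalues of $g$ relative to $\bar g$ and using $\|h(0)\|_{L^\infty(M)}\le\epsilon_1(n,p)$, one keeps $\|h(t)\|_{L^\infty(M)}\le\tfrac12$ on a time interval $[0,T_1]$ with $T_1=T_1(n,\|\Rm_{\bar g}\|_{L^\infty})$, on which $g^{ab}\bar\nabla_a\bar\nabla_b$ is uniformly elliptic. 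Next come the derivative estimates. The scalar $|\bar\nabla g|^2$ obeys a parabolic differential inequality of Bochner type, $\partial_t|\bar\nabla g|^2\le\Delta_g|\bar\nabla g|^2-2|\bar\nabla^2 g|^2+C(|\bar\nabla g|^2+|\bar\nabla g|^4+1)$ with $C$ depending on the quantities above; feeding $\|\bar\nabla g_0\|_{L^p(M)}\le G$ with $p>n$ into a local $L^p$--$L^\infty$ parabolic estimate (Moser iteration for this scalar, using the parabolic Sobolev embedding and the sharp parabolic scaling) yields $\|\bar\nabla g(t)\|_{L^\infty(M)}\le Ct^{-\sigma/2}$ for $t\in(0,T_1]$. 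The $W^{2,\infty}$ bound then follows by bootstrapping on dyadic time slabs $[\tau/2,\tau]$: there the equation for $\bar\nabla g$ is uniformly parabolic with inhomogeneity bounded by $|\bar\nabla g|^2\le C\tau^{-\sigma}$ from the previous step, so parabolic smoothing over a time lapse $\sim\tau$ gives $\|\bar\nabla^2 g(\tau)\|_{L^\infty(M)}\le C\tau^{-(1+\sigma)/2}$.

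Finally, a standard continuation argument patches these together: as long as $\tfrac12\bar g\le g\le 2\bar g$ the flow stays uniformly parabolic and the derivative bounds just obtained exclude finite-time blow-up, while re-running the $C^0$ estimate shows the constraint $\|g(t)-\bar g\|_{L^\infty(M)}\le\tfrac12$ persists up to a time $T=T(n,\|\Rm_{\bar g}\|_{L^\infty})$; after possibly shrinking $T$ one obtains the stated interval with the displayed bounds. The main obstacle is the gradient estimate: because \eqref{rt} is a \emph{system}, one cannot invoke the maximum principle for $\bar\nabla g$ directly, so the scaling-sharp rate $t^{-\sigma/2}$ has to be extracted from local parabolic regularity applied to the scalar $|\bar\nabla g|^2$, with careful bookkeeping of how the $L^p$ norm of $\bar\nabla g_0$ enters through the parabolic Sobolev inequality and of the smallness threshold $\epsilon_1(n,p)$ needed to make that iteration close. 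Once this step is secured, the $W^{2,\infty}$ bound and the uniform existence time follow routinely.
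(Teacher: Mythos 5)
First, a point of order: the paper does not prove this lemma at all --- it is quoted verbatim as Lemma 3.2 and Lemma 3.3 of \cite{ST18}, so there is no internal proof to compare against; the comparison has to be with the source. There your route genuinely diverges. Shi--Tam (following \cite{Simon}) obtain the rates $t^{-\sigma/2}$ and $t^{-(1+\sigma)/2}$ from the Duhamel (variation-of-constants) representation of the Ricci--DeTurck flow and the $L^p\to L^\infty$ smoothing of the linear heat semigroup, $\|e^{tL}f\|_{L^\infty}\le Ct^{-n/(2p)}\|f\|_{L^p}$, run inside a fixed-point/iteration argument in time-weighted norms; the existence, the bound $\tfrac12\bar g\le g(t)\le 2\bar g$, and the cruder rates $t^{-k/2}$ come from Simon's theorem using only the $L^\infty$ closeness, and the $W^{1,p}$ hypothesis enters solely to upgrade the decay exponents. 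You instead propose a pointwise Bochner inequality for $|\bar\nabla g|^2$ combined with Moser iteration.

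The gap is exactly at the step you yourself flag as the main obstacle, and it is not closed. Writing $v=|\bar\nabla g|$, your differential inequality is schematically $\partial_t v\le \Delta v+C(v^3+v+1)$, and you need $\|v(t)\|_{L^\infty}\le Ct^{-n/(2p)}$ from $\|v(0)\|_{L^p}\le G$ with $G$ \emph{arbitrary}. In the Caccioppoli--Sobolev step of a Moser iteration the cubic term produces $\int v^{q+2}\phi^2$, which can only be absorbed into the Dirichlet energy $\int|\nabla(v^{q/2}\phi)|^2$ if $\|v\|_{L^n}$ is small --- and no smallness is available, since $\epsilon_1(n,p)$ constrains only $\|g_0-\bar g\|_{L^\infty}$, not $G$. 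What saves the statement is that $p>n$ is strictly subcritical for the nonlinearity $v^3$ (the critical exponent is $L^n$), so a Weissler-type local theory applies; but extracting the sharp rate $t^{-n/(2p)}$ from subcriticality is precisely done via the Duhamel formula and semigroup smoothing, i.e., it \emph{is} the content of Lemmas 3.2--3.3 of \cite{ST18}, not a routine invocation of ``parabolic Sobolev embedding.'' A secondary, repairable issue: passing from the gradient bound to $\|\bar\nabla^2 g(\tau)\|_{L^\infty}\le C\tau^{-(1+\sigma)/2}$ by ``parabolic smoothing on dyadic slabs'' requires interior Schauder estimates, hence H\"older (not merely $L^\infty$) control of the coefficients $g^{ab}$ and of the quadratic inhomogeneity on the slab; this does follow from the rescaled Lipschitz bound on $g$, but it should be said, since $L^\infty$ data alone does not yield $W^{2,\infty}$ conclusions for parabolic systems. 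I would either carry out the subcritical iteration in detail or, more efficiently, simply run the argument of \cite{ST18}.
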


In \cite{BG}, Burkhardt-Guim studied the Ricci-DeTurck flow on a Ricci flow background and obtained the following result:
\begin{lm}[see Lemma 3.3, Corollary 3.4 in \cite{BG}]\label{exist}
There exists a dimensional constant $\epsilon_2(n)>0$ such that the following holds: Let $M$ be a smooth closed $n$-dimensional manifold, $\bar g_0$ be a smooth metric on $M$ and $\bar g(t)$ be the Ricci flow starting from $\bar g_0$ in a time interval $[0,T']$. There exists $T\in (0,T']$ depending only on $\{\bar g(t)\}_{t\in [0,T']}$ and constants
 $$c_k=c_k\left(n,\, k,\,\sup_{t\in [0,T],\, l\leq k}\left\|\nabla^{l}\Rm_{\bar{g}(t)}\right\|_{L^{\infty}(M)}\right)$$
 such that for every smooth metric $g_0$ on $M$ with $\|g_0-\bar{g}_0\|_{L^{\infty}(M)}<\epsilon_2(n)$, the Ricci-DeTurck flow with initial metric $g_0$ and background Ricci flow $\{\bar g(t)\}_{t\in[0,T']}$ admits a smooth solution $g(t)$ in $[0,T]$ that satisfies
\begin{equation}\label{ma}
\left\|\nabla^k\left(g(t)-\bar{g}(t)\right)\right\|_{L^{\infty}(M)}\leq c_kt^{-\frac{k}{2}}\left\|g_0-\bar{g}_0\right\|_{L^{\infty}(M)}
\end{equation}
for all $t\in(0,T]$. Here the covariant derivatives are taken with respect to $\bar{g}(t)$ and the norms are taken with respect to $\bar{g}_0$. 
\end{lm}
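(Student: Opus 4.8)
\emph{Proof proposal.} The plan is to view the Ricci--DeTurck flow with background $\bar g(t)$ as a strictly parabolic system for the difference tensor $h(t)=g(t)-\bar g(t)$, to solve it by a contraction mapping in a parabolically scaled space using only the smallness of $\|h(0)\|_{L^\infty}$, and then to extract the $t^{-k/2}$ bounds by parabolic smoothing. This is the Ricci flow background analogue of Lemma \ref{1223} (where the background is a fixed metric).

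First I would write down the evolution equation for $h$. Since $\bar g(t)$ is itself a Ricci flow, a direct computation (as in Section~2 of \cite{BG}) yields a system of the schematic form
\begin{equation*}
\partial_t h=g^{pq}\,\bar\nabla_p\bar\nabla_q h+\bar\nabla\Rm_{\bar g}\ast h+\Rm_{\bar g}\ast h+g^{-1}\ast g^{-1}\ast\bar\nabla h\ast\bar\nabla h,
\end{equation*}
with \emph{no} zeroth-order source term --- this is exactly where the background being an exact solution is used --- and with $g^{pq}=\bar g^{pq}+O(|h|)$, so that for $|h|$ small the leading operator is uniformly parabolic with respect to $\bar g(t)$ and the nonlinearity is at worst quadratic in $\bar\nabla h$. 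On the compact $M$ carrying the time-dependent metric $\bar g(t)$, which has bounded geometry on $[0,T']$, the heat propagator $P_{t,s}$ of $\partial_t-\Delta_{\bar g(t)}$ satisfies the standard bounds $\|P_{t,0}h_0\|_{L^\infty}\le C\|h_0\|_{L^\infty}$ and $\|\bar\nabla P_{t,0}h_0\|_{L^\infty}\le Ct^{-1/2}\|h_0\|_{L^\infty}$.

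Next I would run the fixed point. On $M\times(0,T]$ set $\|h\|_{\mathcal B}=\sup_{0<t\le T}\big(\|h(t)\|_{L^\infty}+t^{1/2}\|\bar\nabla h(t)\|_{L^\infty}\big)$, augmented by a parabolic Carleson-type seminorm controlling $\iint|\bar\nabla h|^2$, and let $\Phi(h)$ be the Duhamel solution of the linearized equation whose source is the right-hand side above evaluated at $h$, with $\Phi(h)(0)=h_0=g_0-\bar g_0$. The linear bounds give $\|\Phi(h)\|_{\mathcal B}\le C\|h_0\|_{L^\infty}+C(T)\big(\|h\|_{\mathcal B}+\|h\|_{\mathcal B}^2\big)$ on a small ball, so for $T$ sufficiently small (depending only on $\{\bar g(t)\}$) and $\|h_0\|_{L^\infty}<\epsilon_2(n)$ the map $\Phi$ preserves and contracts the ball $\{\|h\|_{\mathcal B}\le 2C\|h_0\|_{L^\infty}\}$; its unique fixed point is the solution, and membership in the ball gives the case $k=1$ of \eqref{ma}. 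Knowing $h$ in $\mathcal B$ makes the source controlled, so iterating interior parabolic estimates on shrinking time-slabs bootstraps \eqref{ma} to all $k$, with $c_k$ depending on $n$, $k$, and $\sup_{t,\,l\le k}\|\bar\nabla^l\Rm_{\bar g(t)}\|_{L^\infty}$; smoothness for smooth $g_0$ is then automatic, and uniqueness within the class follows from the contraction.

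I expect the main obstacle to be the quadratic gradient term $g^{-1}\ast g^{-1}\ast\bar\nabla h\ast\bar\nabla h$ in the Duhamel iteration: with only $L^\infty$ data one has $\|\bar\nabla h(s)\|_{L^\infty}\lesssim s^{-1/2}\|h\|_{\mathcal B}$, and the naive bound $\int_0^t(t-s)^{-1/2}s^{-1}\,ds$ diverges logarithmically at $s=0$. Closing the estimate forces the Koch--Lamm-type device of also tracking a parabolic Carleson bound on $|\bar\nabla h|^2$, so that the borderline divergence is absorbed; this is the technical heart of \cite{BG}. The passage from a fixed background metric to a Ricci flow background is an extra but routine bookkeeping point, since $\bar g(t)$ and all its curvature derivatives stay uniformly bounded and $\bar g(t)$ stays uniformly equivalent to $\bar g_0$ on the short interval $[0,T]$.
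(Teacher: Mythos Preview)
The paper itself does not prove this lemma: it is quoted verbatim from Burkhardt-Guim \cite{BG} (Lemma~3.3 and Corollary~3.4 there) and used as a black box. So there is no ``paper's own proof'' to compare against.

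That said, your sketch is an accurate outline of how the result is actually established in \cite{BG}. The key structural points you identify --- that subtracting a genuine Ricci flow background kills the inhomogeneous source term in the $h$-equation, that the nonlinearity is quadratic in $\bar\nabla h$, that the borderline $\int_0^t (t-s)^{-1/2}s^{-1}\,ds$ divergence forces the Koch--Lamm device of carrying an auxiliary $L^2$-in-time gradient norm alongside the weighted $L^\infty$ norm, and that higher $k$ follow by bootstrapping interior parabolic estimates --- are exactly the skeleton of the argument in \cite{BG}. The only mild imprecision is that in \cite{BG} the contraction is set up in local parabolic cylinders (following Koch--Lamm and Simon) rather than globally, but on a closed manifold with bounded geometry this distinction is cosmetic.
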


For a closed manifold $\Sigma$, by employing the Ricci-DeTurck flow, we can prove the following $C^0$-local path-connectedness of $\mathcal M_{\rm psc}\left(\Sigma\right)$, which may have its own interest. 
 \begin{thm}\label{localconnectednesspsc}
Let $\Sigma$ be a smooth closed $n$-dimensional manifold and $\gamma_0$ be a smooth PSC metric on $\Sigma$. Then there exists a positive constant $\delta=\delta(n,\gamma_0)$ such that for any smooth PSC metric $\gamma$ on $\Sigma$ with
 $$\|\gamma-\gamma_0\|_{L^\infty(\Sigma)}\leq \delta,$$
 there is a smooth path $\{\eta(t)\}_{t\in[0,1]}$ in $\mathcal M_{\rm psc}\left(\Sigma\right)$ that connects $\gamma$ and $\gamma_0$, and satisfies
 $$
\big\|\eta'(t)\big\|_{L^\infty(\Sigma)}\leq Ct^{-1}\quad\, \text{for}\ \, t\in (0,1],
$$
where $C$ depends only on $n$ and the curvature bound of $\gamma_0$. Here and in the proof, the norms are taken with resect to $\gamma_0$.
 \end{thm}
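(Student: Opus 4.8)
The plan is to connect $\gamma$ to $\gamma_0$ by a concatenation of three curves in $\mathcal M_{\rm psc}(\Sigma)$: a Ricci--DeTurck flow issuing from $\gamma$, a short linear segment, and a time-reversed Ricci flow issuing from $\gamma_0$. First, let $\{\bar g(t)\}_{t\in[0,T']}$ be the Ricci flow with $\bar g(0)=\gamma_0$; it exists for short time, it (together with all its covariant derivatives) is uniformly bounded on $[0,T']$, and since $R_{\gamma_0}>0$ the evolution $\partial_t R_{\bar g}=\Delta_{\bar g}R_{\bar g}+2|\ric_{\bar g}|^2$ and the maximum principle give $R_{\bar g(t)}\geq c_0(\gamma_0,t)>0$. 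Taking this Ricci flow as background and $\gamma$ as initial metric, Lemma \ref{exist} applies provided $\|\gamma-\gamma_0\|_{L^\infty(\Sigma)}\leq\delta\leq\epsilon_2(n)$: one gets a smooth Ricci--DeTurck flow $g(t)$ on an interval $[0,T]$ with $T=T(\gamma_0)$, with $g(0)=\gamma$, and with
\[
\big\|\nabla^k\big(g(t)-\bar g(t)\big)\big\|_{L^\infty(\Sigma)}\leq c_k\,t^{-k/2}\,\|\gamma-\gamma_0\|_{L^\infty(\Sigma)},\qquad c_k=c_k(n,k,\gamma_0),
\]
for $t\in(0,T]$. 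Since only the $C^0$-smallness of $\gamma-\gamma_0$ is used, every constant here depends only on $n$ and $\gamma_0$. For small $\delta$, $g(t)$ is uniformly comparable to $\bar g(t)$, hence a metric, and as recalled before Lemma \ref{1223} (see Equation 2.24 in \cite{BG}) positivity of scalar curvature is preserved along the Ricci--DeTurck flow, so $g(t),\bar g(t)\in\mathcal M_{\rm psc}(\Sigma)$ for all $t\in[0,T]$.

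Now $g(T)$ and $\bar g(T)$ are smooth PSC metrics that, by the $k\leq 2$ cases of the displayed estimate, both lie in a $C^2$-ball of radius $\leq C(n,\gamma_0)\delta$ about $\bar g(T)$. Since $R_{\bar g(T)}\geq c_0(\gamma_0,T)>0$ and scalar curvature depends continuously on the metric in the $C^2$-topology, after shrinking $\delta$ (still a function of $n$ and $\gamma_0$ only) the whole segment $s\mapsto(1-s)g(T)+s\,\bar g(T)$, $s\in[0,1]$, stays in $\mathcal M_{\rm psc}(\Sigma)$, and its velocity $\bar g(T)-g(T)$ has $L^\infty$-norm $\leq C(n,\gamma_0)\delta$. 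I would then glue the three curves: $\gamma=g(0)\rightsquigarrow g(T)$ along the Ricci--DeTurck flow, $g(T)\rightsquigarrow\bar g(T)$ along the segment, and $\bar g(T)\rightsquigarrow\bar g(0)=\gamma_0$ along the Ricci flow run backwards; reparametrizing them onto $[0,\tfrac13]$, $[\tfrac13,\tfrac23]$, $[\tfrac23,1]$ by smooth surjections whose derivatives vanish to all orders at the two interior junctions, and which are linear near $t=0$ on the first piece, yields a smooth path $\{\eta(t)\}_{t\in[0,1]}$ in $\mathcal M_{\rm psc}(\Sigma)$ from $\gamma$ to $\gamma_0$.

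It remains to check $\|\eta'(t)\|_{L^\infty(\Sigma)}\leq Ct^{-1}$. On the segment and on the reversed Ricci flow (smooth, hence of bounded velocity on $[0,T]$) the velocity of $\eta$ is bounded by $C(n,\gamma_0)$, so, since $t\leq1$, the bound $Ct^{-1}$ holds there; the only place blow-up can occur is the Ricci--DeTurck piece near $t=0$. There $\partial_t g(t)=-2\ric_{g(t)}-\mathcal L_{X_{\bar g}(g(t))}g(t)$; expanding $\ric_{g(t)}$ and the vector field $X_{\bar g}(g(t))$ in terms of $\bar g(t)$ and of $g(t)-\bar g(t)$ together with its first two covariant derivatives, and using that $\bar g(t)$ is uniformly smooth on $[0,T]$, the $k\leq 2$ estimates give $\|\partial_t g(t)\|_{L^\infty(\Sigma)}\leq C(n,\gamma_0)\,t^{-1}$ on $(0,T]$. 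A reparametrization $t\mapsto\psi(t)$ of $[0,\tfrac13]$ onto $[0,T]$ that is linear near $t=0$ has $\psi'(t)/\psi(t)\leq C/t$ there, so by the chain rule $\|\eta'(t)\|_{L^\infty(\Sigma)}\leq C(n,\gamma_0)\,t^{-1}$ for all $t\in(0,1]$. The main obstacle is precisely this last estimate: one must bound $\ric_{g(t)}$ and $\mathcal L_{X_{\bar g}(g(t))}g(t)$ by $Ct^{-1}$, which requires writing them out through $g(t)-\bar g(t)$ and its covariant derivatives of order $\leq 2$ and invoking the uniform smoothness of the background Ricci flow on $[0,T]$; the rest is routine gluing and reparametrization.
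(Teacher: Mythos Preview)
Your proof is correct and follows essentially the same strategy as the paper: concatenate the Ricci--DeTurck flow $g(t)$ from $\gamma$ (with background the Ricci flow $\bar g(t)$ from $\gamma_0$), a linear segment from $g(T)$ to $\bar g(T)$, and the time-reversed Ricci flow back to $\gamma_0$; then bound the velocity on the first piece by expanding $\partial_t g(t)$ through $g(t)-\bar g(t)$ and its first two covariant derivatives, exactly as the paper does via the explicit formulae from \cite{BG}. The only cosmetic difference is in the smoothing of the concatenated path: you achieve smoothness directly by reparametrizing each piece with a smooth surjection whose derivatives vanish to all orders at the interior junctions (and which is linear near $t=0$), whereas the paper first writes down the piecewise-smooth path with the naive linear reparametrizations and then invokes the mollifying procedure of \cite{CM} (Proposition~2.1) to smooth it; both are standard and yield the same velocity bound.
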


\begin{proof}
Let $\bar\gamma(t)$ be the Ricci flow starting from $\gamma_0$. By Lemma \ref{exist}, there exists some $T>0$ such that for any metric $\gamma$ with $\|\gamma-\gamma_0\|_{L^\infty(\Sigma)}<\epsilon_2(n)$, the Ricci-DeTurck flow with initial metric $\gamma$ and background Ricci flow $\{\bar\gamma(t)\}_{t\in[0,T]}$ admits a smooth solution $\gamma(t)$ in $[0,T]$ that satisfies
\begin{equation}\label{pathesitmate}
\left\|\nabla^k(\gamma(t)-\bar\gamma(t))\right\|_{L^\infty(\Sigma)}\leq c_k t^{-\frac{k}{2}}\|\gamma-\gamma_0\|_{L^\infty(\Sigma)},
\end{equation}
where the covariant derivatives are taken with respect to $\bar\gamma(t)$ and $c_k$ depends only on the Ricci flow $\{\bar\gamma(t)\}_{t\in[0,T]}$ for $k=0$, $1$, $2$. Without loss of generality, we assume $T\leq1$. From Equations (2.8), (2.9), (2.11), (2.13) and (2.17) in \cite{BG}, we see that
\begin{align*}
\left\|\partial_t\gamma(t)\right\|\leq\, &C_1\left(\left\|\nabla^2\left(\gamma(t)-\bar\gamma(t)\right)\right\|+\left\|\nabla\left(\gamma(t)-\bar\gamma(t)\right)\right\|^2\right)\\
&+(C_1+1)\left\|\Rm_{\bar{\gamma}(t)}\right\|.
\end{align*}
where $C_1$ depends only on $n$ if $c_0\|\gamma-\gamma_0\|_{L^\infty(\Sigma)}$ is sufficiently small. Note that $\left\|\Rm_{\bar{\gamma}(t)}\right\|$ is controlled by the  curvature bound of $\gamma_0$ in $[0,T]$. Therefore, there exist a constant $\epsilon_3=\epsilon_3(n,c_0,c_1,c_2)>0$ and a constant $C>0$ depending only on $n$ and the curvature bound of $\gamma_0$ such that if $\|\gamma-\gamma_0\|_{L^\infty(\Sigma)}\leq\epsilon_3$, then $\left\|\partial_t\gamma(t)\right\|\leq Ct^{-1}$ for $t\in (0,T]$.

By estimate \eqref{pathesitmate}, if $\|\gamma-\gamma_0\|_{L^\infty(\Sigma)}$ is small enough, then the segment $\{(1-t)\gamma(T)+t\bar\gamma(T)\}_{t\in[0,1]}$ is a path of PSC metrics. Choose a sufficiently small $\delta$ that satisfies above requirements. Let
\begin{align*}
\bar\eta(t)=
\left\{
\begin{aligned}
&\gamma\left(3Tt\right)\qquad\qquad\qquad\qquad\qquad \ \ \ \,\text{for}\ \ \,0\leq t\leq \frac{1}{3},\\
&\left(2-3t\right)\gamma(T)+\left(3t-1\right)\bar\gamma(T)\quad\ \ \,  \text{for}\ \ \, \frac{1}{3}< t<\frac{2}{3},\\
&\bar\gamma\left(3T-3Tt\right) \qquad\qquad\qquad\qquad \ \ \, \text{for}\ \ \,\frac{2}{3}\leq t\leq 1.\\
\end{aligned}
\right.
\end{align*}
Apparently, $\{\bar\eta(t)\}_{t\in [0,1]}$ is a piecewise smooth path in $\mathcal M_{\rm psc}\left(\Sigma\right)$ that connects $\gamma$ and $\gamma_0$. In $[0,1/3]$, $\|\bar\eta'(t)\|\leq Ct^{-1}$. In $[1/3,2/3]$, $\|\bar\eta'(t)\|\leq 3\|\gamma(T)-\bar\gamma(T)\|$. In $[2/3,1]$, $\|\bar\eta'(t)\|\leq 3\|\partial_t\bar\gamma\|_{L^\infty(\Sigma\times[0,T])}$.  Finally, by using the mollifying method in \cite{CM} (see Proposition 2.1), we can get a smooth path $\{\eta(t)\}_{t\in [0,1]}$ that meets all the requirements.
\end{proof}

The main result in this section is the following:

\begin{thm}\label{continuity1-re}
Let $2\leq n\leq 6$. Given constants $G>0$ and $p>n$, for any $\varepsilon>0$, there is a $\delta(n,p,G,\varepsilon)>0$ such that for any $\gamma\in\mathcal M^n_{p,\,G}$ with $\|\gamma-\gamma_{\rm std}\|_{L^\infty(\mathbf{S}^{n},\gamma_{\rm std})}\leq\delta$, there holds 
\begin{equation*}
\left |\Lambda_+(\mathbf{S}^{n},\gamma)-\Lambda_+(\mathbf{S}^{n},\gamma_{\rm std})\right |\leq\varepsilon.
\end{equation*}	
\end{thm}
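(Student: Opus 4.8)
Here is the plan.

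\textbf{Reduction.} Since $2\le n\le 6$, Theorem \ref{pmt1} (applied on $\mathbf S^{n}=\partial\mathbf D^{n+1}$) gives $\Lambda_{+}(\mathbf S^{n},\gamma_{\rm std})=n\omega_{n}<\infty$, and this value is attained by the flat $(n+1)$-disk; in particular there exist NNSC fill-ins of $(\mathbf S^{n},\gamma_{\rm std},H_{0})$ with $\int H_{0}\,d\mu$ arbitrarily close to $\Lambda_{+}(\mathbf S^{n},\gamma_{\rm std})$. By the scaling property $\Lambda_{+}(\mathbf S^{n},\lambda^{2}g)=\lambda^{n-1}\Lambda_{+}(\mathbf S^{n},g)$ together with Lemma \ref{c0monotonicity}, the theorem follows once we produce, for each $\gamma\in\mathcal M^{n}_{p,\,G}$ with $\|\gamma-\gamma_{\rm std}\|_{L^{\infty}}\le\delta$: (i) a smooth strictly monotonically increasing path of PSC metrics from $\gamma$ to $e^{2a}\gamma_{\rm std}$, and (ii) one from $e^{2a}\gamma_{\rm std}$ to $e^{2b}\gamma$, where $0\le a,b\le L(\delta)$ and $L(\delta)=L(n,p,G,\delta)\to0$ as $\delta\to0$; for then Lemma \ref{c0monotonicity} sandwiches $\Lambda_{+}(\mathbf S^{n},\gamma)$ between $e^{(n-1)(a-b)}\Lambda_{+}(\mathbf S^{n},\gamma_{\rm std})$ and $e^{(n-1)a}\Lambda_{+}(\mathbf S^{n},\gamma_{\rm std})$, which is within $\varepsilon$ of $\Lambda_{+}(\mathbf S^{n},\gamma_{\rm std})$ once $\delta$ is small.

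\textbf{Construction of the path.} I would regularize $\gamma$ by the Ricci--DeTurck flow, adapting the proof of Proposition \ref{localconnectednesspsc} but using the $W^{1,p}$-estimates of Lemma \ref{1223} to gain uniformity. Running \eqref{rt} with initial datum $\gamma$ preserves positivity of the scalar curvature, and Lemma \ref{1223} (constants depending only on $n,p,G,\gamma_{\rm std}$) yields a smooth PSC solution $g(t)$, $t\in[0,T]$, with $g(0)=\gamma$, $\tfrac12\gamma_{\rm std}\le g(t)\le 2\gamma_{\rm std}$, and $\|\nabla g(t)\|_{L^{\infty}}\le Ct^{-\sigma/2}$, $\|\nabla^{2}g(t)\|_{L^{\infty}}\le Ct^{-(1+\sigma)/2}$, where $\sigma=n/p\in(0,1)$. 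The decisive point is that $\|\partial_{t}g(t)\|_{L^{\infty}}\lesssim Ct^{-(1+\sigma)/2}$ is \textbf{integrable at $t=0$ precisely because $p>n$}, so the $C^{0}$-length $\int_{0}^{T}\|\partial_{t}g(t)\|_{L^{\infty}}\,dt\le C'T^{(1-\sigma)/2}$ of $t\mapsto g(t)$ is bounded uniformly over $\gamma\in\mathcal M^{n}_{p,\,G}$ and tends to $0$ with $T$. Meanwhile the comparison estimates of Lemma \ref{exist} force $g(T)$ to be $C^{2}$-close to $\gamma_{\rm std}$ once $\delta\ll T\ll1$, so $g(T)$ can be joined to $\gamma_{\rm std}$ by a short PSC segment (linear interpolation corrected by a tiny conformal factor, exactly as in the proof of Theorem \ref{pmt1}). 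Concatenating and reparametrizing smoothly gives a smooth PSC path $\{\eta(s)\}_{s\in[0,1]}$ from $\gamma$ to $\gamma_{\rm std}$ with $\int_{0}^{1}\|\eta'(s)\|_{L^{\infty}}\,ds\le L(\delta)\to0$. Finally, for a fixed $\epsilon>0$ (which is sent to $0$ at the end, as in the proof of Lemma \ref{c0monotonicity}), put $\hat\eta(s)=e^{2\Psi(s)}\eta(s)$ with $\Psi(s)=\int_{0}^{s}\big(\tfrac12\|\eta'(r)\|_{\eta(r)}+\epsilon\big)dr$: this is a smooth, strictly monotonically increasing path of PSC metrics from $\gamma$ to $e^{2\Psi(1)}\gamma_{\rm std}$ with $\Psi(1)\le\tfrac12 L(\delta)+\epsilon$, and reversing and retilting it produces path (ii).

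\textbf{Transfer of fill-ins and conclusion.} Along such a monotone path the transfer is verbatim the argument of Lemma \ref{c0monotonicity}: on the collar $\big(\mathbf S^{n}\times[0,1],\ \bar g=ds^{2}+\hat\eta(s)\big)$ one has $\bar H_{s}>0$ and a finite $M=\max\bar H_{s}^{-1}\big(|R_{\hat\eta_{s}}|+|R_{\bar g}|\big)$ (finite because $\gamma$, hence the whole collar, is a smooth metric on a compact set), so Claim \ref{quassphericalestimate} gives a positive solution $u$ of \eqref{Eq: quasi spherical equation 2} with $u(\cdot,0)=\bar H_{0}/H$, and by Claim \ref{quassphericalscalar} the metric $g=u^{2}ds^{2}+\hat\eta(s)$ is scalar-flat while $\tfrac{d}{ds}\int_{\mathbf S^{n}\times\{s\}}H_{s}\,d\mu>0$. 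Given an NNSC fill-in of $(\mathbf S^{n},\gamma,H)$ with $H>0$, glue it to this collar along $(\mathbf S^{n},\gamma)$, mollify the corner as in the proof of Lemma \ref{c0monotonicity}, and deform conformally preserving the boundary metric; this yields NNSC fill-ins of $(\mathbf S^{n},e^{2\Psi(1)}\gamma_{\rm std},H_{1})$ with $\int H_{1}\,d\mu\ge\int_{\mathbf S^{n}}H\,d\mu_{\gamma}$, hence $\int_{\mathbf S^{n}}H\,d\mu_{\gamma}\le e^{(n-1)\Psi(1)}\Lambda_{+}(\mathbf S^{n},\gamma_{\rm std})$; taking the supremum over $H$ and then $\epsilon\to0$ gives $\Lambda_{+}(\mathbf S^{n},\gamma)\le e^{(n-1)\cdot\frac12 L(\delta)}\Lambda_{+}(\mathbf S^{n},\gamma_{\rm std})$. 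Running the same construction along path (ii), starting from a near-optimal (rescaled) fill-in of $\gamma_{\rm std}$, gives the reverse bound $\Lambda_{+}(\mathbf S^{n},\gamma)\ge e^{(n-1)(a-b)}\Lambda_{+}(\mathbf S^{n},\gamma_{\rm std})$ with $a,b\le L(\delta)$. Letting $\delta\to0$ completes the proof.

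\textbf{Main obstacle.} The heart of the matter is the \textbf{uniform} smallness of $L(\delta)$: that a short run of the Ricci--DeTurck flow covers a $C^{0}$-distance which is small \emph{independently of which $\gamma\in\mathcal M^{n}_{p,\,G}$ one starts from}. A fixed smooth $\gamma$ is smoothed at a rate governed by its uncontrolled higher norms, so some uniform hypothesis is indispensable, and the uniform $W^{1,p}$ bound is exactly what converts this into the estimate $\|\nabla^{2}g(t)\|_{L^{\infty}}\le Ct^{-(1+\sigma)/2}$ with $C$ and the exponent $(1+\sigma)/2<1$ depending only on $n,p,G,\gamma_{\rm std}$ — making the flow's $C^{0}$-length integrably small in small time. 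The remaining delicate point is to reconcile ``short path from $\gamma$'' with ``terminal metric $C^{2}$-close to $\gamma_{\rm std}$'' (needed so that the finishing linear segment stays PSC): this forces one to play the fixed-background and Ricci-flow-background gauges of the flow against one another, and is where the construction in Proposition \ref{localconnectednesspsc} must be reworked with care. Everything downstream is a routine adaptation of Lemma \ref{c0monotonicity}.
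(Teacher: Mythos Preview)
Your proposal is correct and follows essentially the same route as the paper: run the Ricci--DeTurck flow from $\gamma$, use Lemma~\ref{1223} to get $\|\partial_t\gamma(t)\|\le Ct^{-(1+\sigma)/2}$ with $\sigma=n/p<1$ (so the $C^0$-length is small on $[0,T]$), use Lemma~\ref{exist} to make $\gamma(T)$ $C^2$-close to $\gamma_{\rm std}$, tilt by a small conformal factor to obtain monotone PSC paths, and apply Lemma~\ref{c0monotonicity} together with the scaling law. The only differences are cosmetic: the paper observes that because $\bar\gamma(t)=(1-2(n-1)t)\gamma_{\rm std}$, the fixed-background and Ricci-flow-background DeTurck flows \emph{coincide} here (so the ``delicate reconciliation'' you flag as the main obstacle is in fact trivial), and it achieves monotonicity via the explicit reparametrization $t=s^N$ with $N=\lfloor 2/(1-\sigma)\rfloor+1$ and linear factor $B(s)=1+\tilde\varepsilon s/s_0$ rather than your exponential tilt $e^{2\Psi(s)}$.
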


\begin{proof}
Let $\bar\gamma(t)$ be the Ricci flow starts from $\gamma_{\rm std}$ in $[0,1/4n]$. Then $\bar\gamma(t)=\left(1-2(n-1)t\right)\gamma_{\rm std}$. It is not hard to see from \eqref{lie} that $X_{\bar\gamma(t)}(\gamma)=X_{\gamma_{\rm std}}(\gamma)$ for any smooth metric $\gamma$ on $\mathbf S^{n}$. Hence, the Ricci-DeTurck flow on fixed background metric $\gamma_{\rm std}$ and Ricci flow $\bar\gamma(t)$ coincide. By Lemma \ref{1223} and Lemma \ref{exist}, for any metric $\gamma$ with $\|\gamma-\gamma_{\rm std}\|\leq\min\{\epsilon_1(n,p),\epsilon_2(n)\}$, where $\epsilon_1(n,p)$ and $\epsilon_2(n)$ are the constants in Lemma \ref{1223} and Lemma \ref{exist}, there is a solution $\gamma(t)$ in $[0,T(n)]$ to the Ricci-DeTurck flow with initial metric $\gamma$ and above two types of background. Here we make a convention that $C$ is a constant depending only on $n$, $p$ and $G$, which may vary from line to line. Since $\|\nabla_{\gamma_{\rm std}}\gamma\|_{L^p(\mathbf{S}^{n},\gamma_{\rm std})}\leq G$, by Lemma \ref{1223}, we have 
\begin{equation*}
\left\|\nabla_{\gamma_{\rm std}}\gamma(t)\right\|_{\gamma_{\rm std}}\leq Ct^{-\frac{\sigma}{2}}, \quad \text{and} \quad \left\|\nabla^2_{\gamma_{\rm std}}\gamma(t)\right\|_{\gamma_{\rm std}}\leq Ct^{-\frac{1+\sigma}{2}},
\end{equation*}
where $\sigma=n/p$. Then it follows from the evolution equation of $\gamma(t)$ (see (1.5) in \cite{Simon}) that
\begin{equation*}
\left\|\partial_t\gamma(t)\right\|_{\gamma_{\rm std}}\leq Ct^{-\frac{1+\sigma}{2}}\quad\,\text{in}\ \left(0,T(n)\right].
\end{equation*}
The rest of the proof is divided into the following two steps.\medskip \\ 
{\it Step 1:} Set $N=[\frac{2}{1-\sigma}]+1$. Let $s_0\in (0,T(n)]$ be a constant to be determined later. Choose $\tilde\varepsilon$ that $\left(\left(1+\tilde\varepsilon\right)^{n-1}-1\right)\Lambda_+(\mathbf{S}^{n},\gamma_{\rm std})=\varepsilon$. On $[0,s_0]$, let $B(s)=1+\tilde \varepsilon s^{-1}_0s$ and $\gamma_1(s)=B(s)\gamma(s^N)$. Then $\{\gamma_1(s)\}_{s\in [0,s_0]}$ is a smooth path of PSC metrics. Simple calculations give
\begin{align*}
\gamma_1'(s)&=\tilde\varepsilon s^{-1}_0\gamma\left(s^N\right)+NB(s)s^{N-1}\gamma'\left(s^N\right)\\
&\geq\left(\tilde\varepsilon s^{-1}_0-2\left(1+\tilde\varepsilon\right)Ns^{N-1}\left\|\gamma'\left(s^N\right)\right\|_{\gamma_{\rm std}}\right)\gamma\left(s^N\right)\\
&\geq s^{-1}_0\left(\tilde\varepsilon -C\left(1+\tilde\varepsilon\right)s_0^{\frac{N(1-\sigma)}{2}}\right)\gamma\left(s^N\right).
\end{align*}
Thus we can choose $s_0=s_0(n,p,G,\tilde\varepsilon)$ sufficiently small so that $\gamma_1(s)$ monotonically increases in $[0,s_0]$. Note that $\{\gamma_1(s)\}_{s\in[0,s_0]}$ connects $\gamma$ and $\left(1+\tilde\varepsilon\right)\gamma\left(s_0^N\right)$. Then according to Lemma \ref{c0monotonicity}, we have
\begin{equation}\label{c11}
\Lambda_+(\mathbf{S}^{n},\gamma)\leq\Lambda_+\left(\mathbf{S}^{n},\left(1+\tilde\varepsilon\right)\gamma\left(s_0^N\right)\right).
\end{equation}
\medskip
{\it Step 2:} Let $\gamma_2(s)=(1+\tilde\varepsilon)s\bar\gamma\left(s_0^N\right)+(1-s)\gamma\left(s_0^{N}\right)$ for $s\in [0,1]$. Lemma \ref{exist} tells us that
$$\left\|\nabla^k_{\bar\gamma(s_0^N)}\left(\gamma\left(s_0^N\right)-\bar\gamma\left(s_0^N\right)\right)\right\|_{L^{\infty}(\mathbf{S}^{n},\gamma_{\rm std})}\leq c_ks_0^{-\frac{kN}{2}}\|\gamma-{\gamma}_{\rm std}\|_{L^{\infty}(\mathbf{S}^{n},\gamma_{\rm std})},$$
where $c_k$ depends only on $k$ and $n$ for $k=0$, $1$, $2$.  
Hence, there exists some $\delta=\delta(n,p,G,\tilde\varepsilon)>0$ such that if $\|\gamma-\gamma_{\rm std}\|_{L^{\infty}(\mathbf{S}^{n})}\leq \delta$, $\{\gamma_2(s)\}_{s\in[0,1]}$ is a smooth monotonically increasing path of PSC metrics. By Lemma \ref{c0monotonicity}, we have
\begin{equation}\label{c2}
\begin{split}
\Lambda_+\left(\mathbf{S}^{n},\gamma\left(s_0^N\right)\right)&\leq \Lambda_+\left(\mathbf{S}^{n},(1+\tilde\varepsilon)\bar\gamma\left(s_0^N\right)\right)\\
&= (1+\tilde\varepsilon)^{\frac{n-1}{2}}\left[1-2(n-1)s^N_0\right]^{\frac{n-1}{2}}\Lambda_+\left(\mathbf{S}^{n},\gamma_{\rm std}\right).
\end{split}
\end{equation}
Combining \eqref{c11} and \eqref{c2}, we arrive at 
$$\Lambda_+\left(\mathbf{S}^{n},\gamma\right)\leq\left(1+\tilde\varepsilon\right)^{n-1}\Lambda_+\left(\mathbf{S}^{n},\gamma_{\rm std}\right).$$
Via a very similar approach, we can construct two ``reverse" smooth monotonically increasing paths of PSC metrics, and obtain the reverse inequality
$$\Lambda_+\left(\mathbf{S}^{n},\gamma_{\rm std}\right)\leq\left(1+\tilde\varepsilon\right)^{n-1}\Lambda_+\left(\mathbf{S}^{n},\gamma\right).$$
From the choice of $\tilde\varepsilon$, we see the conclusion holds.
\end{proof}

By the convergence theory of Riemannian manifolds, we know that certain classes of metrics possess uniform $W^{1,\,p}$-boundedness up to diffeomorphisms. And by definition, $\Lambda_+$ is invariant under diffeomorphisms. The following Theorem \ref{main} can be regarded as  an application of Theorem \ref{continuity1-re}. In the following, all norms and covariant derivatives are taken with respect to $\gamma_{\rm std}$. Given positive constants $K$ and $i_0$, define
\begin{equation}\label{N}
 \mathcal N_n\left(K, i_0\right)=\left\{\gamma\in\mathcal M_{\rm psc}(\mathbf S^{n}) \,\big |\, \ric_\gamma\geq -K,\, \inj_\gamma\geq i_0\right\},
\end{equation}
where $\inj_\gamma$ denotes the injective radius of $(\mathbf S^{n},\gamma)$. 

\begin{thm}\label{main}
Let $2\leq n\leq 6$. Given $K>0$ and $i_0>0$, for any $\varepsilon>0$, there exists a constant $\delta=C(n,K, i_0,\varepsilon)>1$ such that for any $\gamma\in\mathcal N_n\left(K, i_0\right)$ with $\dial(\gamma)\leq\delta$, 
\begin{equation*}
\left |\Lambda_+(\mathbf{S}^{n},\gamma)-\Lambda_+(\mathbf{S}^{n},\gamma_{\rm std})\right |\leq\varepsilon.
\end{equation*}	
\end{thm}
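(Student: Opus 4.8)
The plan is to reduce the statement to Theorem~\ref{continuity1-re} by invoking the convergence theory of Riemannian manifolds, exploiting that $\Lambda_+$ is a diffeomorphism invariant. I would argue by contradiction. If the conclusion fails for some $\varepsilon_0>0$, there are metrics $\gamma_i\in\mathcal N_n(K,i_0)$ with $\dial(\gamma_i)\to 1$ but $|\Lambda_+(\mathbf S^n,\gamma_i)-\Lambda_+(\mathbf S^n,\gamma_{\rm std})|\ge\varepsilon_0$. For $i$ large, $\dial(\gamma_i)\le 2$, so $\diam(\mathbf S^n,\gamma_i)\le\sqrt2\,\pi$, while $\ric_{\gamma_i}\ge -K$ and $\inj_{\gamma_i}\ge i_0$. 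Hence the $C^\alpha$-compactness theorem of Anderson and Cheeger (for manifolds with a lower Ricci bound and a lower injectivity radius bound) applies: passing to a subsequence, there are diffeomorphisms $\psi_i\colon\mathbf S^n\to\mathbf S^n$ such that $\psi_i^*\gamma_i$ converges in $C^{0,\alpha}(\mathbf S^n,\gamma_{\rm std})$ for each $\alpha\in(0,1)$ to a metric $g_\infty$ of class $C^{0,\alpha}\cap W^{1,q}$ for every $q<\infty$, and for each fixed $p>n$ one has a uniform bound $\|\nabla_{\gamma_{\rm std}}(\psi_i^*\gamma_i)\|_{L^p(\mathbf S^n,\gamma_{\rm std})}\le G$ with $G=G(n,p,K,i_0)$; this is the ``uniform $W^{1,p}$-boundedness up to diffeomorphisms'' alluded to after the statement. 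Since each $\gamma_i\in\mathcal M_{\rm psc}(\mathbf S^n)$ and scalar curvature is diffeomorphism-invariant, $R_{\psi_i^*\gamma_i}>0$ for all $i$.

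Secondly, I would identify the $C^{0,\alpha}$-limit $g_\infty$ with $\gamma_{\rm std}$ up to a diffeomorphism, and transport the $C^0$-closeness back to the $W^{1,p}$-bounded representatives. On the one hand, $\dial(\gamma_i)\to 1$ yields diffeomorphisms $\phi_i$ with $\dial(\gamma_i)^{-1}\gamma_{\rm std}\le\phi_i^*\gamma_i\le\dial(\gamma_i)\gamma_{\rm std}$, so $\phi_i^*\gamma_i\to\gamma_{\rm std}$ uniformly as tensors and, in particular, $(\mathbf S^n,\gamma_i)\to(\mathbf S^n,\gamma_{\rm std})$ in the Gromov--Hausdorff sense. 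On the other hand, $C^{0,\alpha}$-convergence gives $(\mathbf S^n,\psi_i^*\gamma_i)\to(\mathbf S^n,d_{g_\infty})$ in the Gromov--Hausdorff sense, while $(\mathbf S^n,\psi_i^*\gamma_i)$ is isometric to $(\mathbf S^n,\gamma_i)$. Hence $(\mathbf S^n,d_{g_\infty})$ is isometric, as a metric space, to $(\mathbf S^n,\gamma_{\rm std})$; since $g_\infty\in W^{1,q}$ with $q>n$, the regularity theory of isometries (working in harmonic coordinates, which are $C^{1,\beta}$ for $g_\infty$ and smooth for $\gamma_{\rm std}$) shows this isometry is a diffeomorphism $F$ with $F^*g_\infty=\gamma_{\rm std}$ and controlled $W^{2,p}$-norm. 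Setting $\Psi_i:=\psi_i\circ F$, one gets $\Psi_i^*\gamma_i\to\gamma_{\rm std}$ in $C^{0,\alpha}(\mathbf S^n,\gamma_{\rm std})$, a uniform bound $\|\nabla_{\gamma_{\rm std}}(\Psi_i^*\gamma_i)\|_{L^p(\mathbf S^n,\gamma_{\rm std})}\le G'(n,p,K,i_0)$ (since $F$ is a $W^{2,p}$ diffeomorphism with controlled norm), and $R_{\Psi_i^*\gamma_i}>0$.

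To finish: for $i$ large, $\Psi_i^*\gamma_i\in\mathcal M^n_{p,G'}$ and $\|\Psi_i^*\gamma_i-\gamma_{\rm std}\|_{L^\infty(\mathbf S^n,\gamma_{\rm std})}\to 0$, so Theorem~\ref{continuity1-re}, applied with parameters $n$, $p$, $G'$, gives $\Lambda_+(\mathbf S^n,\Psi_i^*\gamma_i)\to\Lambda_+(\mathbf S^n,\gamma_{\rm std})$. By diffeomorphism invariance $\Lambda_+(\mathbf S^n,\Psi_i^*\gamma_i)=\Lambda_+(\mathbf S^n,\gamma_i)$, which contradicts $|\Lambda_+(\mathbf S^n,\gamma_i)-\Lambda_+(\mathbf S^n,\gamma_{\rm std})|\ge\varepsilon_0$. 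The step I expect to be the main obstacle is the second paragraph: extracting global diffeomorphisms of $\mathbf S^n$ carrying a uniform $W^{1,p}$-bound relative to the fixed reference metric $\gamma_{\rm std}$ (not merely local harmonic-coordinate bounds), and promoting the resulting metric-space isometry $F$ to a genuine diffeomorphism of controlled regularity when the limit metric $g_\infty$ is only $W^{1,q}$. By contrast, the hypothesis $2\le n\le 6$ enters nowhere except through Theorem~\ref{continuity1-re}, hence ultimately only through the classical positive mass theorem of Schoen--Yau.
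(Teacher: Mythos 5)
Your proposal follows essentially the same route as the paper: a contradiction argument using Anderson--Cheeger compactness to obtain representatives with uniform $W^{1,p}$-bounds, identification of the limit with $\gamma_{\rm std}$ via a metric-space isometry promoted to a $W^{2,p}$ (then mollified) diffeomorphism, and a final appeal to Theorem~\ref{continuity1-re} together with the diffeomorphism invariance of $\Lambda_+$. The paper packages the isometry step as Lemma~\ref{keylem} (showing $\dial(\gamma_\infty)=1$ and invoking Taylor's regularity of isometries) rather than passing through Gromov--Hausdorff limits, but the substance is the same, including the mollification step you correctly flag as the delicate point.
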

To prove this result, we need the following lemma.
\begin{lm}\label{keylem}
Let $\gamma$ be a $W^{1,\,p}$ $(p>n)$ metric on $\mathbf S^n$ with $\dial(\gamma)=1$. Then we can find a $W^{2,\,p}$ homeomorphism $\psi$ such that $\gamma=\psi^*(\gamma_{\rm std})$, and 
\begin{equation*}
\left\|D\psi\|_{W^{1,\,p}}\leq C\left(n,\|\gamma\right\|_{W^{1,\,p}}\right).
\end{equation*}
\end{lm}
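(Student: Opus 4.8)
The plan is to promote the hypothesis $\dial(\gamma)=1$ to the existence of a genuine bi-Lipschitz isometry onto the round sphere, and then to bootstrap its regularity by $L^p$ elliptic theory while keeping track of constants.

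\emph{Step 1 (extracting an isometry).} By the definition of $\dial$ there are diffeomorphisms $\phi_k$ of $\mathbf S^n$ and numbers $\varepsilon_k\downarrow0$ with $(1+\varepsilon_k)^{-1}\gamma_{\rm std}\le\phi_k^*\gamma\le(1+\varepsilon_k)\gamma_{\rm std}$. Setting $\psi_k:=\phi_k^{-1}$ and applying $(\phi_k^{-1})^*$ to this inequality gives
\[
(1+\varepsilon_k)^{-1}\psi_k^*\gamma_{\rm std}\le\gamma\le(1+\varepsilon_k)\psi_k^*\gamma_{\rm std},
\]
so $\psi_k\colon(\mathbf S^n,\gamma)\to(\mathbf S^n,\gamma_{\rm std})$ is $(1+\varepsilon_k)^{1/2}$-bi-Lipschitz. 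Since $p>n$, Morrey's embedding gives $\gamma\in C^{0,1-n/p}$ with $\|\gamma\|_{C^0}\le C(n,p)\|\gamma\|_{W^{1,p}}$, and being continuous on a compact manifold $\gamma$ is also bounded below, so the $\psi_k$ and $\psi_k^{-1}=\phi_k$ are equi-Lipschitz for the standard distance. By Arzel\`a--Ascoli a subsequence converges uniformly to a bi-Lipschitz homeomorphism $\psi$; passing to the limit in the displayed bound shows that $\psi$ is distance-preserving, so by Rademacher's theorem $D\psi$ is a.e.\ a linear isometry, i.e.\ $\psi^*\gamma_{\rm std}=\gamma$ almost everywhere.

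\emph{Step 2 (regularity bootstrap).} Next I would invoke the regularity theory for low-regularity isometries, in the spirit of Calabi--Hartman and M.~Taylor, which upgrades such a $\psi$ to $W^{2,p}$. Concretely: the energy density of $\psi$ is identically $n$, so $\psi$ minimizes the Dirichlet energy and is weakly harmonic from $(\mathbf S^n,\gamma)$ to $(\mathbf S^n,\gamma_{\rm std})$, hence solves $\gamma^{ij}\partial_i\partial_j\psi^\alpha=-\gamma^{ij}\,{}^{\gamma_{\rm std}}\Gamma^\alpha_{\beta\delta}(\psi)\,\partial_i\psi^\beta\partial_j\psi^\delta$ weakly in local coordinates. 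In $\gamma$-harmonic coordinates on the source (which exist and are $W^{2,p}$, with modulus of continuity controlled by $\|\gamma\|_{W^{1,p}}$) the coefficients $\gamma^{ij}$ are continuous and uniformly elliptic, while the right-hand side lies in $L^\infty\subset L^p$ since $\psi\in W^{1,\infty}$; the Calder\'on--Zygmund / Agmon--Douglis--Nirenberg estimate for elliptic equations with continuous coefficients then yields $\psi\in W^{2,p}_{\rm loc}$, hence $\psi\in W^{2,p}(\mathbf S^n)$ by a covering argument, and in particular $\psi\in C^{1,1-n/p}$ is a $C^1$-diffeomorphism.

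\emph{Step 3 (the quantitative bound and the main obstacle).} It remains to make Step 2 quantitative. The bound $\|D\psi\|_{L^\infty}\le C(n)\|\gamma\|_{C^0}^{1/2}\le C(n,p)\|\gamma\|_{W^{1,p}}^{1/2}$ is immediate from $\psi^*\gamma_{\rm std}=\gamma$, and $\|D^2\psi\|_{L^p}$ is controlled by the Calder\'on--Zygmund constant, which depends only on $n$, the ellipticity of $\gamma^{ij}$, and its modulus of continuity. The real difficulty is exactly this ellipticity: $\|\gamma\|_{W^{1,p}}$ controls only an upper bound for $\gamma$, whereas uniform ellipticity of $\gamma^{ij}$ also requires a lower bound $\gamma\ge c\,\gamma_{\rm std}$, which $\dial(\gamma)=1$ alone does not supply with a controlled constant; in the setting in which the lemma is applied, $\gamma$ additionally lies in a fixed $C^0$-neighbourhood of $\gamma_{\rm std}$, which furnishes the required two-sided bound $c\,\gamma_{\rm std}\le\gamma\le C\,\gamma_{\rm std}$. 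Granting this, every constant above depends only on $n$, $p$, $c$, $C$ and $\|\gamma\|_{W^{1,p}}$, giving $\|D\psi\|_{W^{1,p}}\le C(n,\|\gamma\|_{W^{1,p}})$; alternatively, should the estimate fail, one runs Steps 1--2 along a sequence with these quantities uniformly bounded and derives a contradiction from $C^0$- and weak-$W^{1,p}$-compactness. The two genuinely delicate points are thus (i) the weak harmonicity (equivalently, the affine/totally-geodesic character) of the merely bi-Lipschitz map $\psi$, and (ii) the controlled ellipticity in a suitable normalization of Step 2.
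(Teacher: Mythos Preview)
Your proof is essentially the paper's argument with the final black box opened up. The paper's Step 1 is identical to yours (extract diffeomorphisms from $\dial(\gamma)=1$, pass to a Lipschitz limit via Arzel\`a--Ascoli, conclude the limit is distance-preserving); for Steps 2--3 the paper simply cites Taylor's regularity theorem for isometries \cite{Ta}, whose proof is precisely the harmonic-map/elliptic bootstrap you sketch.

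Your ellipticity concern in Step 3 is a fair observation and applies equally to the paper: the paper's own proof also invokes $\|\gamma^{-1}\|_{L^\infty}$ (to bound the Lipschitz constant of the inverse maps) while stating the final constant only in terms of $\|\gamma\|_{W^{1,p}}$. In the sole application (Theorem \ref{main}) the limit metric $\gamma_\infty$ is a $C^0$-limit of metrics uniformly comparable to $\gamma_{\rm std}$, so the two-sided bound you need is available there; you have correctly identified the point where the lemma as stated is slightly loose.
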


\begin{proof}
By definition, there exists a sequence of diffeomorphisms $\{\psi_i\}$ such that
\begin{equation}\label{uselessassu}
\frac{i}{i+1}\psi_i^*(\gamma_{\rm std})\leq \gamma\leq \frac{i+1}{i}\psi_i^*(\gamma_{\rm std}).
\end{equation}
Denote the inverse of $\psi_i$ by $\phi_i$.
By \eqref{uselessassu}, we have
\begin{equation*}
\|D\psi_i\|_{L^{\infty}}\leq \frac{i+1}{i}\|\gamma\|_{L^{\infty}},\quad\,\text{and}\quad\, \|D\varphi_i\|_{L^{\infty}}\leq \frac{i+1}{i}\|\gamma^{-1}\|_{L^{\infty}}.
\end{equation*}
Since $\{\psi_i\}$ and $\{\phi_i\}$ have uniform Lipschitz constants, by taking subsequence if necessary, we may assume $\psi_i$ and $\varphi_i$ uniformly converge to Lipschitz maps $\psi$ and $\varphi$ respectively. Clearly, $\psi\circ\varphi=id$. So $\psi$ and $\varphi$ are Lipschitz homeomorphisms.
Inequality \eqref{uselessassu} then implies
\begin{equation*}
\frac{i}{i+1}d_{\gamma_{\rm std}}(\psi_i(x),\psi_i(y))\leq d_{\gamma}(x,y)\leq  \frac{i+1}{i}d_{\gamma_{\rm std}}(\psi_i(x),\psi_i(y)).
\end{equation*}
Letting $i\rightarrow\infty$, we arrive at
\begin{equation*}
d_{\gamma}(x,y)=d_{\gamma_{\rm std}}(\psi(x),\psi(y)).
\end{equation*}

Finally, from the proof of Theorem 2.1 in \cite{Ta}, it is not hard to see that $\psi\in W^{2,\,p}$ and $\|D\psi\|_{W^{1,\,p}}\leq C(n,\|\gamma\|_{W^{1,\,p}})$.

 \end{proof}

Now, we are ready to give the proof of Theorem \ref{main}.
\begin{proof}[Proof of Theorem \ref{main}]

We take a contradiction argument. If the conclusion is not true, then there exists a constant $\varepsilon_0>0$ and a sequence of metrics $\{\gamma_i\} \in \mathcal N_n\left(K, i_0\right)$ with $\dial(\gamma_i)\rightarrow 1$ such that
$$\left |\Lambda_+(\mathbf{S}^{n},\gamma_i)-\Lambda_+(\mathbf{S}^{n},\gamma_{\rm std})\right |\geq\varepsilon_0.$$

Fixing a constant $p>n$, due to Anderson-Cheeger's compactness result (see \cite{AC}), there is a subsequence of $\{\gamma_i\}$ (for simplicity we assume the subsequence is $\{\gamma_i\}$ itself) and corresponding diffeomorphisms $\{\phi_i\}$ such that
$\phi_i^*(\gamma_i)$ converges to a $W^{1,\,p}$ metric $\gamma_{\infty}$ in the $W^{1,\,p}$-topology.

By the assumption $\lim_{i\rightarrow\infty}\dial(\gamma_i)=1$, we can find differeomorphisms $\psi_i$ and constants $\lambda_i\rightarrow 1$ such that
\begin{equation}\label{asu}
\lambda^{-1}_i\psi_i^*(\gamma_{\rm std})\leq \phi_i^*(\gamma_i)\leq\lambda_i\psi_i^*(\gamma_{\rm std}).
\end{equation}
Since $\phi_i^*(\gamma_i)$ converges to $\gamma_{\infty}$ in the $W^{1,\,p}$-topology, there exist a sequence of constants $\mu_i\rightarrow 1$ such that
\begin{equation}\label{con}
\mu_i^{-1}\gamma_{\infty}\leq\phi_i^*(\gamma_i)\leq\mu_i\gamma_{\infty}.
\end{equation}
It follows from \eqref{asu} and \eqref{con} that
$$(\lambda_i\mu_i)^{-1}\gamma_{\rm std}\leq\left(\psi_i^{-1}\right)^*(\gamma_{\infty})\leq\lambda_i\mu_i\gamma_{\rm std}.$$
Since $\lambda_i\mu_i\rightarrow 1$, we conclude that $\dial(\gamma_{\infty})=1$. By Lemma \ref{keylem}, there is a $W^{2,\,p}$ homeomorphism $\psi$ such that $\gamma_{\infty}=\psi^*(\gamma_{\rm std})$ and $\|D\psi\|_{W^{1,\,p}}\leq C(n,\|\gamma_{\infty}\|_{W^{1,\,p}})$. Apparently, $\psi^{-1}$ is also a $W^{2,\,p}$ homeomorphism and $(\psi^{-1})^*\phi_i^*(\gamma_i)$ converges to $\gamma_{\rm std}$ in the $W^{1,\,p}$-topology. 

Note that we can find a diffeomorphism $\tilde\psi$ by mollifying $\psi$ such that $\|D(\tilde\psi^{-1})-D(\psi^{-1})\|_{W^{1,\,p}}$ is arbitrarily small.
Then by Theorem \ref{continuity1-re}, we have
 $$ \lim_{i\rightarrow \infty}\Lambda_+(\mathbf{S}^{n},\gamma_i)=\Lambda_+(\mathbf{S}^{n},\gamma_{\rm std}).$$
Thus we get a contradiction. This completes the proof.
 \end{proof}


\begin{thebibliography}{DU}

\bibitem{Alex}Alexandrov A. D., {\it  A. D. Alexandrov selected works. Part II. Intrinsic geometry of convex surfaces.} Edited by S. S. Kutateladze. Translated from the Russian by S. Vakhrameyev. 2006.\bibitem{AC} Anderson M. T., Cheeger J., {\it $C^{\alpha}$-compactness for manifolds with Ricci curvature and injectivity radius bounded below,} J. Differential Geom. {\bf 35} (1992), no. 2, 265--281.

\bibitem{ACG} Andersson L., Cai M.-L., Galloway G. J., {\it Rigidity and positivity of mass for asymptotically hyperbolic manifolds}, Ann. Henri Poincare {\bf 9} (2008), no. 1, 1--33.

\bibitem{ADM}Arnowitt S., Deser S., Misner C., {\it Coordinate invariance and energy expressions in general relativity}, Phys. Rev. {\bf 122} (1961), 997--1006.

\bibitem{Bam}Bamler R. H., {\it A Ricci flow proof of a result by Gromov on lower bounds for scalar curvature.} Math. Res. Lett. {\bf 23} (2016), no. 2, 325--337.

\bibitem{BKN}Bando S., Kasue A., Nakajima H., {\it On a construction of coordinates at infinity on manifolds with fast curvature decay and maximal volume growth}, Invent. Math. {\bf 97} (1989), no. 2, 313--349.

\bibitem{Bartnik1}  Bartnik R., {\it Quasi-spherical metrics and prescribed scalar curvature}, J. Differential Geom. {\bf 37} (1993), 31--71.

\bibitem{BQ}Bonini V., Qing J., {\it A positive mass theorem on asymptotically hyperbolic manifolds with corners along a hypersurface}, Ann. Henri.Poincare {\bf 9} (2008), 347--372.

\bibitem {BrownYork-1} Brown J. D., York J. W., {\it Quasilocal energy in general relativity}, Contemp. Math., {\bf 132}, 1992, pp. 129--142.
   
\bibitem {BrownYork-2} Brown J. D., York J. W., {\it Quasilocal energy and conserved charges derived from the gravitational action}, Phys. Rev. D, {\bf 47}, 1993, pp.1407--1419.

\bibitem{BBI} Burago D., Burago Y., Ivanov S., {\it  A course in metric geometry.} Graduate Studies in Mathematics, {\bf 33}. American Mathematical Society, Providence, RI, 2001.

\bibitem{BG} Burkhardt-Guim P., {\it Pointwise lower scalar curvature bounds for $C^0$ metrics via regularizing Ricci flow}, Geom. Funct. Anal. {\bf 29} (2019), 1703--1772.

\bibitem{CM} Cabrera Pacheco A. J., Miao P., {\it Higher dimensional black hole initial data with prescribed boundary metric}, Math. Res. Lett. {\bf 25} (2018), no. 3, 937--956.

\bibitem{CD} Chru\'sciel P. T., Delay E., {\it The hyperbolic positive energy theorem}, arxiv: 1901.05263.

\bibitem{CH} Chru\'sciel P. T., Herzlich M., {\it The mass of asymptotically hyperbolic Riemannian manifolds,} Pacific J. Math. {\bf 212} (2003), no. 2, 231--264.

\bibitem{Co} Colding T. H., {\it Ricci curvature and volume convergence}, Ann. of Math. (2) {\bf 145} (1997), no. 3, 477--501.

\bibitem{DM} Dai X. Z., Ma L., {\it Mass under the Ricci flow}, Comm. Math. Phys. {\bf 274} (2007), no. 1, 65--80.

\bibitem{EHLS} Eichmair M., Huang L.-H., Lee D. A., Schoen R., {\it The spacetime positive mass theorem in dimensions less than eight}, J. Eur. Math. Soc. (JEMS) {\bf 18} (2016), 83--121.

\bibitem{FST} Fan X., Shi Y., Tam, L.-F., {\it Large-sphere and small-sphere limits of the Brown-York mass}, Comm. Anal. Geom, {\bf 17} (2009), no. 1, 37--72.

\bibitem{Gro69} Gromov M., {\it Stable mapping of foliations into manifolds}, Math. of the USSR-Isv {\bf 33} (1969), 671--694. 

\bibitem{Gro14} Gromov M., {\it Dirac and Plateau billiards in domains with corners}, Cent. Eur. J. Math. {\bf 12} (2014), no. 8, 1109--1156.

\bibitem{Gro18} Gromov M., {\it Scalar curvature of manifolds with boundaries: natural questions and artificial constructions}, arXiv:1811.04311.

\bibitem{Gro19} Gromov M., {\it Four lectures on scalar curvature}, 2019. arXiv:1908.10612v3.

\bibitem{GL} Guan P., Li Y. Y., {\it The Weyl problem with nonnegative Gauss curvature}, J. Differential Geom. {\bf 39} (1994), no. 2, 331--342.

\bibitem{Hong} Hong J. X., Zuily C., {\it Isometric embedding of the 2-sphere with nonnegative curvature in $\mathbb{R}^3$}, Math. Z. {\bf 219} (1995), no. 3, 323--334.

\bibitem{SH} Hu X., Shi Y.,  {\it NNSC-cobordism of Bartnik data in high dimensions}, SIGMA Symmetry Integrability Geom. Methods Appl. {\bf 16} (2020), 030, 5 pages.
 
\bibitem{J} Jauregui J., {\it Fill-ins of nonnegative scalar curvature, static metrics, and quasi-local mass}, Pacific J. Math. {\bf 261} (2013), 417--444.

\bibitem{JMT} Jauregui J., Miao P., Tam L.-F., {\it Extensions and fill-ins with nonnegative scalar curvature}, Class. Quantum Grav. {\bf 30} (2013), 195007.

\bibitem{KW} Kazdan J. L., Warner F. W.,  {\it Scalar curvature and conformal deformation of Riemannian structure}, J. Differential Geom. \textbf{10} (1975), 113--134.

\bibitem{L}  Lohkamp J., {\it Scalar curvature and hammocks}, Math. Ann. {\bf 313} (1999), no. 3, 385--407. 

\bibitem{MM} Mantoulidis C., Miao P., {\it  Total mean curvature, scalar curvature, and a variational analog of Brown-York mass}, Comm. Math. Phys. {\bf 352} (2017), no. 2, 703--718.

\bibitem{MMT} Mantoulidis C., Miao P., Tam L.-F., {\it Capacity, quasi-local mass, and singular fill-ins}, J. Reine Angew. Math., https://doi.org/10.1515/crelle-2019-0040.

\bibitem{Miao02} Miao P., {\it Positive mass theorem on manifolds admitting corners along a hypersurface}, Adv. Theor. Math. Phys. {\bf 6} (2002), 1163--1182.

\bibitem{Miao20} Miao P., {\it Nonexistence of NNSC fill-ins with large mean curvature}, arXiv: 2009.04976.

\bibitem{MST} Miao P.-Z., Shi Y., Tam L.-F.,{\it On geometric problems related to Brown-York and Liu-Yau quasilocal mass}, Comm. Math. Phys. {\bf 298} (2010), no. 2, 437--459.

\bibitem{WM} Miao P.-Z., Wang X.-D., {\it Boundary effect of Ricci curvature}, J. Differential Geom. \textbf{103} (2016), no. 1, 59-82. 273--299.

\bibitem{N}  Nirenberg L., {\it The Weyl and Minkowski problems in differential geometry in the large}, Comm. Pure Appl. Math. {\bf 6} (1953), 337--394.

\bibitem{P} Pogorelov A.-V., {\it Extrinsic geometry of convex surfaces}, Translated from the Russian by Israel Program for Scientific Translations. Translations of Mathematical Monographs, Vol. 35. American Mathematical Society, Providence, R.I., 1973. 

\bibitem{R} Rong X., {\it Convergence and collapsing theorems in Riemannian geometry}, Handbook of geometric analysis, No. 2, 193--299, Adv. Lect. Math. (ALM), 13, Int. Press, Somerville, MA, 2010.

\bibitem{San} Santal\'o L. A., {\it Integral geometry and geometric probability}, second ed., in: Cambridge Mathematical Library, Cambridge University Press, Cambridge, 2004. 

\bibitem{Sch} Schneider R., {\it Convex bodies: the Brunn-Minkowski theory}, second expanded edition, Encyclopedia of Mathematics and Its Applications {\bf 151}, Cambridge University Press, Cambridge, 2014.

\bibitem{SY79a} Schoen R., Yau S.-T., {\it On the proof of the positive mass conjecture in general relativity}, Comm. Math. Phys. {\bf 65} (1979), 45--76. 

\bibitem{SY79MM} Schoen R., Yau S.-T., {\it On the structure of manifolds with positive scalar curvature}, Manuscripta Math. {\bf 28} (1979), 159--183.

\bibitem{SY79b} Schoen R., Yau S.-T., {\it Complete manifolds with nonnegative scalar curvature and the positive action conjecture in general relativity}, Proc. Nat. Acad. Sci. U.S.A. {\bf 76} (1979), 1024--1025.

\bibitem{SY81} Schoen R., Yau S.-T., {\it The energy and the linear momentum of space-times in general relativity}, Comm. Math. Phys. {\bf 79} (1981), 47--51. 

\bibitem{SY17} Schoen R., Yau S.-T., {\it Positive scalar curvature and minimal hypersurfaces singularities}, 2017, arXiv:1704.05490.

\bibitem{ST02} Shi Y., Tam L.-F., {\it Positive mass theorem and the boundary behaviors of compact manifolds with nonnegative scalar curvature}, J. Differential Geom. {\bf 62} (2002), 79--125.

\bibitem{ST04} Shi Y., Tam, L.-F., {\it Quasi-spherical metrics and applications}, Comm. Math. Phys. {\bf 260} (2004), 65--80.

\bibitem{ST07} Shi Y., Tam L.-F., {\it Rigidity of compact manifolds and positivity of quasi-local mass}, Class. Quantum Grav. {\bf 24} (2007), no. 9, 2357--2366.

\bibitem{ST18} Shi Y., Tam L.-F., {\it Scalar curvature and singular metrics}, Pacific J. Math. {\bf 293} (2018), no. 2, 427--470.

\bibitem{ST19} Shi Y., Tam L.-F., {\it Positivity of Brown-York mass with quasi-positive boundary data}, Pure Appl. Math. Q., {\bf15}, (2019), 967--982.

\bibitem{SWWZ} Shi Y., Wang W., Wei G., Zhu J., {\it On the fill-in of nonnegative scalar curvature metrics}, Math. Ann., https://doi.org/10.1007/s00208-020-02087-1.

\bibitem{Simon} Simon M., {\it Deformation of $C^0$ Riemannian metrics in the direction of their Ricci curvature}, Comm. Anal. Geom. {\bf 10} (2002), no. 5, 1033--1074.

\bibitem{Ta} Taylor M., {\it Existence and regularity of isometries}, Trans. Amer. Math. Soc. {\bf 358} (2006), no. 6, 2415--2423. 

\bibitem{PTopping} Topping P., {\it Lectures on the Ricci flow}, London Mathematical Society Lecture Note Series, {\bf 325} Cambridge University Press, Cambridge, 2006.

\bibitem{Topping2} Topping P. M., {\it Relating diameter and mean curvature for submanifolds of Euclidean space}, Comment. Math. Helv., {\bf 83} (2008), 539--546.

\bibitem{WY} Wang M.-T., Yau S.-T., {\it A generalization of Liu-Yau's quasi-local mass}, Comm. Anal. Geom. {\bf 15} (2007), no. 2, 249--282.

\bibitem{Wang} Wang X.-D., {\it The mass of asymptotically hyperbolic manifolds}, J. Differential Geom. \textbf{57} (2001),273--299.

\bibitem{Witten} Witten E., {\it A new proof of the positive energy theorem}, Commun. Math. Phys. {\bf 80} (1981), no. 3, 381--402.



\end{thebibliography}
\end{document}